\newtheorem{theorem}{Theorem}
\newtheorem{lemma}[theorem]{Lemma}
\newtheorem{remark}[theorem]{Remark}
\newtheorem{proposition}[theorem]{Proposition}
\definecolor{red}{rgb}{1,0,0}
\begin{document}
\title[Harmonic unit normal sections of Grassmannians]{Harmonic unit normal
sections of Grassmannians associated with cross products}
\author{Francisco Ferraris, Ruth Paola Moas and Marcos Salvai}
\thanks{This work was supported by Consejo Nacional de Investigaciones Cient%
\'{\i}ficas y T\'{e}cnicas and Secretar\'{\i}a de Ciencia y T\'{e}cnica de
la Universidad Nacional de C\'{o}rdoba.}

\begin{abstract}
Let $G\left( k,n\right) $ be the Grassmannian of oriented subspaces of
dimension $k$ of $\mathbb{R}^{n}$ with its canonical Riemannian metric. We
study the energy of maps assigning to each $P\in G\left( k,n\right) $ a unit
vector normal to $P$. They are sections of a sphere bundle $E_{k,n}^{1}$
over $G\left( k,n\right) $. The octonionic double and triple cross products
induce in a natural way such sections for $k=2$, $n=7$ and $k=3$, $n=8$,
respectively. We prove that they are harmonic maps into $E_{k,n}^{1}$
endowed with the Sasaki metric. This, together with the well-known result
that Hopf vector fields on odd dimensional spheres are harmonic maps into
their unit tangent bundles, allows us to conclude that all unit normal
sections of the Grassmannians associated with cross products are harmonic.
In a second instance we analyze the energy of maps assigning an orthogonal
complex structure $J\left( P\right) $ on $P^{\bot }$ to each $P\in G\left(
2,8\right) $. We prove that the one induced by the octonionic triple product
is a harmonic map into a suitable sphere bundle over $G\left( 2,8\right) $.
This generalizes the harmonicity of the canonical almost complex structure
of $S^{6}$.
\end{abstract}

\maketitle

\noindent \textsl{Key words and phrases: }harmonic map, energy of sections,
Grassmannian, cross product, octonions, total bending, rough Laplacian, Hopf
vector field, almost complex structure

\medskip

\noindent \textsl{MSC 2020:} 17A35, 53C15, 53C30, 53C43, 58E20

\section{Introduction and presentation of the results}

One can say that Herman Gluck and Wolfgang Ziller set in trend the problem
of finding the best organized among all geometric structures of a certain
type on a manifold, when they proved that Hopf vector fields on $S^{3}$ have
minimum volume among all unit vector fields on $S^{3}$ \cite{GZ}. Shortly
after that, Eugenio Calabi and Herman Gluck found that the almost complex
structure on $S^{6}$ induced by the octonionic cross product is the best
orthogonal almost-complex structure on $S^{6}$, also with respect to volume 
\cite{CG}.\ Later, other criteria for the niceness of a geometric structure
on a manifold $M$ (thought of as a section of a fiber bundle over $M$) were
presented and also extensively studied.\ For instance, minimal or critical
total bending (a functional which measures to which extent the section fails
to be parallel), beginning with Gerrit Wiegmink \cite{wiegmink}, and minimal
or critical energy. Contributions in this sense were made for a wide variety
of structures (mainly for unit vector fields, but also, for example, for
distributions, metric contact structures, etc.), see for instance \cite{BGV,
PerroneLibro, GMO, olga, GMO1, GD1, Lou}.

Next we comment on the scope of the article. The general question, stated
vaguely, is the following:\ 

\begin{center}
What is the best way of assigning a unit vector $u\in P^{\bot }$ \\[0pt]
to each oriented subspace $P$ of dimension $k$ in $\mathbb{R}^{n}$?
\end{center}

The same question in a different guise:\ In which unit normal direction $%
u\left( P\right) $ should be moved each oriented $k$-subspace $P$ of $%
\mathbb{R}^{n}$ in order to obtain the best disposition of oriented affine $%
k $-subspaces at unit distance from the origin?

Within this setting we can ask another general question:

\begin{center}
What is the best way of assigning

to each oriented subspace $P$ of dimension $k$ in $\mathbb{R}^{k+2m}$

an orthogonal transformation $J\left( P\right) :P^{\bot }\rightarrow P^{\bot
}$ with $J\left( P\right) ^{2}=-  \operatorname{id}$?
\end{center}

These problems are stated more precisely considering sections of sphere
bundles over Grassmannians and choosing the criterion for nice organization,
normally minimal or critical energy or volume.

Sometimes there are no assignments like those in the first question if they
are required to be continuous. For instance, oriented one dimensional
subspaces of $\mathbb{R}^{3}$ may be identified with points of the
two-sphere, and their orthogonal planes with the corresponding tangent
spaces, but $S^{2}$ admits no continuous unit vector field. For the second
question, there may also be no continuous assignment like that, for example
for the case $k=1$ and $2m\neq 2,6$. In fact, the existence would contradict
Borel and Serre's result in \cite{BS}, that the sphere $S^{n}$ admits an
almost complex structure if and only if $n=2$ or $n=6$.

We are far from answering the questions in general. We find assignments that
are geometrically distinguished rather than optimal. Our contribution to the
problem in the first question is as follows: We concentrate on the cases
where the mappings are given in terms of cross products and prove that they
are harmonic from the Grassmannian into a conveniently defined sphere bundle
over it.

One of our two main theorems generalizes amply the classical result
asserting that Hopf vector fields on odd dimensional spheres are harmonic
maps into the unit tangent bundle endowed with the Sasaki metric \cite{Han}
(see also \cite{Perrone, PerroneCalvaruso}). Such a unit vector field is a
critical point of the energy functional if one considers variations through 
\emph{all} smooth mappings. We remark that this condition is stronger than
another one, which has been also studied and sometimes is called \emph{%
vertical harmonicity}, where only variations through unit vector fields are
considered. The vertical harmonic maps turn out to be critical for the total
bending functional.

Now we deal with the second question. Let $G\left( 2,8\right) $ be the
Grassmannian of oriented subspaces of $\mathbb{R}^{8}$ of dimension $2$. We
consider mappings assigning to each $P\in G\left( 2,8\right) $ a
skew-symmetric transformation $T$ on $P^{\bot }$ with $ \operatorname{tr}\left(
T^{2}\right) =-6$. We prove that the distinguished map of this type induced
by the triple octonionic cross product (here $T$ is in particular a complex
orthogonal structure) is harmonic into a certain spherical bundle over $%
G\left( 2,8\right) $. The similar problem with the double octonionic cross
product yields the harmonicity of the almost complex canonical structure of
the sphere $S^{6}$, proved in \cite{CP}, and is related to the main result
in \cite{BLS}, that this structure has minimal energy (but varying only
through almost complex structures).

In the rest of the introduction we recall the relevant definitions and state
Theorems \ref{general} and \ref{Jarmo}, our contributions to the first and
second question above, respectively. The corresponding details are given in
Section 2. In Sections 3 and 4 the theorems are proved. Throughout the
article, smooth means of class $C^{\infty }$.

\subsection{The total bending and the energy of unit sections}

Let $M$ be an oriented compact Riemannian manifold. Let $E\rightarrow M$ be
a Riemannian vector bundle over $M$ with a metric connection $\nabla $ and
let $E^{1}=\left\{ v\in E\mid \left\Vert v\right\Vert =1\right\} $ be the
corresponding spherical bundle.

The functional $\mathcal{B}$, called \emph{total bending}, assigns to each
section $\sigma $ of $E^{1}\rightarrow M$ the number 
\begin{equation}
\mathcal{B}\left( \sigma \right) =\int_{M}\left\Vert \nabla \sigma
\right\Vert ^{2}\,dv\text{,}  \label{combadura}
\end{equation}%
that indicates to which extent the section $\sigma $ fails to be parallel.
Here $dv$ is the volume form of $M$ and $\left\Vert T\right\Vert ^{2}= \operatorname{%
tr}\left( T^{t}T\right) $ ($T^{t}$ denotes the transpose of $T$).

The \emph{energy} of a smooth map $F:M\rightarrow N$, with $N$ a Riemannian
manifold, is by definition the integral 
\begin{equation*}
\mathcal{E}\left( F\right) =\dfrac{1}{2}\int_{M}\left\Vert dF\right\Vert
^{2}\,dv\text{.}
\end{equation*}%
Critical points of the energy functional are called \emph{harmonic maps}.

Now we endow $E^{1}$ with the Sasaki metric induced by $\nabla $. Then it
makes sense to consider the energy of the sections $\sigma :M\rightarrow
E^{1}$. It turns out that $\mathcal{E}\left( \sigma \right) $ differs from $%
\mathcal{B}\left( \sigma \right) $ in a pair of constants involving only the
dimension and the volume of $M$.

If a section is a harmonic map, then it is critical for $\mathcal{B}$, but
we want to emphasize that the concept of critical section for the energy is
in general much stronger than the corresponding notion for the total
bending, since in the first case arbitrary smooth functions from the base
into the spherical bundle are admitted as variations, not only variations by
sections. For this reason, critical points of $\mathcal{B}$ are called \emph{%
vertically harmonic maps}.

\subsection{Unit normal sections of the Grassmannian}

Let $G\left( k,n\right) $ be the Grassmannian of oriented $k$-dimensional
subspaces of $\mathbb{R}^{n}$ endowed with the canonical symmetric
Riemannian metric. Let%
\begin{equation}
E_{k,n}=\left\{ (P,v)\in G\left( k,n\right) \times \mathbb{R}^{n}\mid v\text{
is orthogonal to }P\right\} \text{,}  \label{bundle}
\end{equation}%
which is the total space of a Riemannian vector bundle over $G\left(
k,n\right) $ with typical fiber $\mathbb{R}^{n-k}$ (the fiber inherits the
inner product from $\mathbb{R}^{n}$).

The Riemannian vector bundle $E_{k,n}\rightarrow G\left( k,n\right) $ has a
canonical \emph{metric connection}, whose associated covariant derivative is
the following: If $Q:I\rightarrow G\left( k,n\right) $ is a smooth curve of
oriented subspaces in\ $\mathbb{R}^{n}$ and $x:I\rightarrow \mathbb{R}^{n}$
is a smooth curve such that $x_{t}\perp Q_{t}$ for all $t$, then%
\begin{equation}
\frac{D}{dt}\left( Q_{t},x_{t}\right) =\left( Q_{t},\pi _{t}\left(
x_{t}^{\prime }\right) \right) \text{,}  \label{DerivadaCovariante}
\end{equation}%
where $x^{\prime }$ is the usual derivative of $x$ in $\mathbb{R}^{n}$ and $%
\pi _{t}$ is the orthogonal projection onto $\left( Q_{t}\right) ^{\bot }$.

We are looking for unit normal sections of the Grassmannian $G\left(
k,n\right) $, that is, sections of the sphere bundle 
\begin{equation}
E_{k,n}^{1}=\left\{ \left( P,v\right) \in E_{k,n}\mid \left\Vert
v\right\Vert =1\right\} \rightarrow G\left( k,n\right) \text{,}
\label{bundle1}
\end{equation}%
that have critical energy.

\subsection{Unit normal sections associated with cross products\label%
{crossproduct}}

Cross products provide distinguished examples of sections of $%
E_{k,n}^{1}\rightarrow G\left( k,n\right) $ for some $\left( k,n\right) $.
Robert\ Brown and Alfred\ Gray defined and classified them in \cite{brown}
(see also \cite{gray}): Let $V$ be an $n$-dimensional vector space over $%
\mathbb{R}$ and let $\left\langle \cdot ,\cdot \right\rangle $ be a positive
definite inner product on $V$. A \emph{cross product }on\emph{\ }$V$ is a
multilinear map $X:V^{r}\rightarrow V\;\;$($1\leq r\leq n$) satisfying%
\begin{equation*}
\left\langle X\left( u_{1},\dots ,u_{r}\right) ,u_{i}\right\rangle =0\text{\
\ \ \ \ \ and\ \ \ \ \ \ }\left\Vert X\left( u_{1},\dots ,u_{r}\right)
\right\Vert ^{2}={\mathrm{det}}\left( \left\langle u_{i},u_{j}\right\rangle
\right)
\end{equation*}%
for any $r$-tuple $u_{1},\dots ,u_{r}$ in $V$.

Cross products exist only when $\left( r,n\right) $ equals $\left(
m,m+1\right) $, $\left( 1,2m\right) $ (any $m\in \mathbb{N}$), $\left(
2,7\right) $ or $\left( 3,8\right) $. The first case is trivial and in the
second case, $X$ is an orthogonal transformation satisfying $X^{2}=-  \operatorname{id%
}$. By the classification of Brown and Gray there remain (up to
(anti-)isomorphisms) only the canonical cross products $X_{2,7}$ and $%
X_{3,8} $ on the imaginary octonions $\mathbb{R}^{7}=  \operatorname{Im}\mathbb{O}$
and the octonions $\mathbb{O}$, respectively, given by%
\begin{align}
X_{2,7}\left( u,v\right) & =u\times v=uv+\left\langle u,v\right\rangle \text{%
,}  \label{X2,7} \\
X_{3,8}\left( u,v,w\right) & =-u\left( \overline{v}w\right) +\left\langle
u,v\right\rangle w+\left\langle v,w\right\rangle u-\left\langle
w,u\right\rangle v  \label{X3,8}
\end{align}%
($uv$ denotes the multiplication in $\mathbb{O}$). They are called the
double and the triple cross products, respectively.

A cross product $X:\left( \mathbb{R}^{n}\right) ^{k}\rightarrow \mathbb{R}%
^{n}$ on $\mathbb{R}^{n}$ induces naturally the section%
\begin{equation*}
\sigma _{X}\left( Q\right) =\left( Q,X\left( u_{1},\dots ,u_{k}\right)
\right)
\end{equation*}%
of the sphere bundle $E_{k,n}^{1}\rightarrow G\left( k,n\right) $ as in (\ref%
{bundle1}), where $\left\{ u_{1},...,u_{k}\right\} $ is any positively
oriented orthonormal basis of $Q$. The properties of cross products imply
that $\sigma _{X}$ is well defined.

Now we are in the position to state one of our main results.

\begin{theorem}
\label{general}The sections of the sphere bundles $E_{k,n}^{1}\rightarrow
G\left( k,n\right) $ associated with cross products are harmonic maps.
\end{theorem}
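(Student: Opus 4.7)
The plan is to break the statement along the Brown--Gray classification into four families, two of which reduce to essentially known or trivial facts while the two exceptional cases require a direct verification of the harmonic map equations, crucially exploiting symmetry.

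For the family $(r,n)=(m,m+1)$, the map sending an oriented hyperplane to its positive unit normal identifies $G(m,m+1)$ with $S^{m}$ and $E^{1}_{m,m+1}$ with the trivial $\{\pm 1\}$-bundle over $S^{m}$. Under this identification $\sigma_{X_{m,m+1}}$ is the positive normal section; applying the definition (\ref{DerivadaCovariante}) one sees that $\nabla\sigma_{X}\equiv 0$, since the derivative of a unit normal to a moving hyperplane $P_{t}$ lies in $P_{t}$ and therefore has zero projection onto $P_{t}^{\perp}$. A parallel section is automatically a harmonic map. For the family $(r,n)=(1,2m)$, identify $G(1,2m)$ with $S^{2m-1}$; the connection (\ref{DerivadaCovariante}) agrees with the Levi-Civita connection of $S^{2m-1}$ and $E^{1}_{1,2m}$ with the unit tangent bundle $T^{1}S^{2m-1}$. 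Since $X$ is now an orthogonal complex structure on $\mathbb{R}^{2m}$, the section $\sigma_{X}$ corresponds to the vector field $u\mapsto Xu$, which is a Hopf vector field on $S^{2m-1}$, so harmonicity follows from the classical Han--Yim result cited in the introduction.

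The content of the theorem is therefore in the two exceptional cases $(k,n)=(2,7)$ and $(3,8)$. Here the plan is to exploit that $G_{2}$ acts transitively on $G(2,7)$ preserving $X_{2,7}$, and $\mathrm{Spin}(7)$ acts transitively on $G(3,8)$ preserving $X_{3,8}$, and that both actions lift compatibly to the sphere bundles $E^{1}_{k,n}$. Consequently $\sigma_{X}$ is equivariant, its tension field is an equivariant section of the appropriate pullback bundle, and it is enough to verify $\tau(\sigma_{X})(P_{0})=0$ at a single base point $P_{0}$.

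To carry out this pointwise verification, I would use the standard decomposition of the tension field of a section of a Sasaki-metric sphere bundle into a vertical part $\nabla^{*}\nabla\sigma_{X}-\|\nabla\sigma_{X}\|^{2}\sigma_{X}$ and a horizontal curvature part $\sum_{i}R^{\nabla}(\sigma_{X},\nabla_{e_{i}}\sigma_{X})\,e_{i}$, where $\{e_{i}\}$ is an orthonormal basis of $T_{P_{0}}G(k,n)\cong\mathrm{Hom}(P_{0},P_{0}^{\perp})$. Both expressions are computed by differentiating the defining formulas for $\sigma_{X}$ via moving orthonormal frames of $P$, and then rearranged using the algebraic identities of the double and triple octonionic cross products (alternator identities for $\mathbb{O}$ and the $r$-fold analogues such as $X(u,X(u,v))=-\|u\|^{2}v$ for $u\perp v$). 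I expect the main obstacle to be the horizontal condition: the curvature of the normal bundle $E_{k,n}$ must be expressed in octonionic terms and the contracted expression reorganized so that the cross-product axioms close the argument. The case $(3,8)$ should be the harder of the two, because $X_{3,8}$ carries the inhomogeneous inner-product correction terms visible in (\ref{X3,8}), which complicate both the rough Laplacian and the curvature trace.
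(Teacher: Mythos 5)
Your overall strategy matches the paper's: split along the Brown--Gray classification, note that $(m,m+1)$ is parallel and $(1,2m)$ is the Hopf vector field case, and reduce the two exceptional cases to a pointwise verification at a base point using the transitive $G_{2}$ and $\mathrm{Spin}(7)$ actions together with the vertical/horizontal splitting of the tension field (this is exactly the criterion of Proposition \ref{PacoCarmelo}, which you are effectively reconstructing). The observation that the hyperplane case is literally parallel is correct and slightly sharper than the paper's remark that it is ``trivial''; it dovetails nicely with the paper's Proposition \ref{ParallelSections}, which shows that no parallel section exists in the $(2,7)$ and $(3,8)$ cases.

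The one genuine divergence is in the case $(2,7)$. You propose two independent pointwise computations, one with $G_{2}$ at a base plane of $G(2,7)$ and one with $\mathrm{Spin}(7)$ at a base $3$-plane of $G(3,8)$. The paper instead performs the direct computation only once, for $\sigma_{3}$, and then transports the result to $\sigma_{2}$ through the totally geodesic embedding $\phi:G(2,7)\to G(3,8)$, $\phi(u\wedge v)=e_{0}\wedge u\wedge v$, together with the Riemannian bundle morphism $\Phi:E_{2,7}\to E_{3,8}$. Because $\phi$ is totally geodesic and the diagram (\ref{diagrama}) commutes (which uses $X_{3,8}(e_{0},u,v)=u\times v$), one gets $\Delta_{E_{2,7}}\sigma_{2}$ and $\mathcal{R}_{\sigma_{2}}=\phi^{*}\mathcal{R}_{\sigma_{3}}$ directly from the corresponding quantities for $\sigma_{3}$. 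Your route would also work, but the paper's reduction buys a real economy: the double cross product identities never have to be manipulated independently, and one avoids duplicating the Laplacian and curvature computations. If you do go the direct route for $(2,7)$, you should be aware that the curvature part of the argument (the vanishing of the horizontal $1$-form $\mathcal{R}_{\sigma}$) is where the genuine work lies, and that for $(3,8)$ the paper gets this from the algebraic skew-symmetry property (\ref{propiedadX}) of the triple cross product combined with the antisymmetry of the curvature of the normal bundle in the two ``row'' indices of the Grassmannian frame $e_{j}^{\ell}$.
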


\begin{remark}
\label{Hopf}The theorem generalizes the classical result that Hopf vector
fields on odd dimensional spheres are harmonic maps \emph{(}see \cite{Han}
and also \cite{Perrone, PerroneCalvaruso}\emph{)}. In fact, in the case $%
\left( 1,2m\right) $, the cross product $X$ is an orthogonal linear complex
structure on $\mathbb{R}^{2m}$ and it may be thought of as a Hopf vector
field on $S^{2m-1}$: $X\left( p\right) \in p^{\bot }=T_{p}S^{2m-1}$,
identifying the Grassmannian $G\left( 1,2m\right) $ with $S^{2m-1}$ and $%
E_{1,2m}^{1}$ with $TS^{2m-1}$. Previously, Wiegmink had proved in \cite%
{wiegmink} that Hopf fields are critical for the total bending functional 
\emph{(}see also \cite{wood}\emph{)}.
\end{remark}

By the remark, we concentrate only on the remaining nontrivial cases:\ We
prove below in Theorems \ref{TeoSigma2} and \ref{TeoSigma3} that the
following sections are harmonic maps:\ $\sigma _{2}:G\left( 2,7\right)
\rightarrow E_{2,7}^{1}$ and $\sigma _{3}:G\left( 3,8\right) \rightarrow
E_{3,8}^{1}$, defined by 
\begin{eqnarray*}
\sigma _{2}\left( u\wedge v\right) &=&\left( u\wedge v,u\times v\right) 
\text{,} \\
\sigma _{3}\left( u\wedge v\wedge w\right) &=&\left( u\wedge v\wedge
w,X_{3,8}\left( u,v,w\right) \right)
\end{eqnarray*}%
for orthonormal subsets $\left\{ u,v\right\} $ of $\mathbb{R}^{7}$ and $%
\left\{ u,v,w\right\} $ of $\mathbb{R}^{8}$, respectively.

\medskip

If there existed a parallel section of $E_{k,n}^{1}\rightarrow G\left(
k,n\right) $, then the energy would trivially attain a minimum at it, but
this is not the case:

\begin{proposition}
\label{ParallelSections} The spherical bundles $E_{2,7}^{1}\rightarrow
G\left( 2,7\right) $ and $E_{3,8}^{1}\rightarrow G\left( 3,8\right) $ do not
admit parallel sections, not even local ones.
\end{proposition}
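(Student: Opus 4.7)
The plan is to show that the curvature of the canonical connection $\nabla$ on $E_{k,n}\to G(k,n)$ has no common zero in any fiber, which rules out local parallel sections. If a section $\sigma$ with $\sigma(P_{0})=(P_{0},v_{0})$ were parallel on a neighborhood of $P_{0}$, then $\nabla\sigma=0$ would force the curvature endomorphism to satisfy $R(X,Y)v_{0}=0$ for every $X,Y\in T_{P_{0}}G(k,n)$. So it suffices to prove: for every unit $v_{0}\in P_{0}^{\bot}$ there exist $X,Y$ with $R(X,Y)v_{0}\neq 0$.

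Next I would use the symmetric-space presentation $G(k,n)=SO(n)/(SO(k)\times SO(n-k))$ with Cartan decomposition $\mathfrak{so}(n)=\bigl(\mathfrak{so}(k)\oplus\mathfrak{so}(n-k)\bigr)\oplus\mathfrak{m}$, where $\mathfrak{m}$ consists of the block matrices $X_{A}=\bigl(\begin{smallmatrix}0 & -A^{t}\\ A & 0\end{smallmatrix}\bigr)$ for $A\in\operatorname{Hom}(P_{0},P_{0}^{\bot})$. By standard symmetric-space theory, the curvature of the canonical connection on the associated bundle $E_{k,n}$ at $P_{0}$ is, up to sign, the isotropy action of the $\mathfrak{so}(n-k)$-component of $[X_{A},X_{B}]$, and a direct bracket computation gives
\[
R(X_{A},X_{B})=AB^{t}-BA^{t}\in\mathfrak{so}(P_{0}^{\bot}),
\]
acting on the fiber $P_{0}^{\bot}\simeq\mathbb{R}^{n-k}$.

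Finally, to exhibit a nontrivial curvature action on any $v_{0}$: since $n-k=5$ in both cases $(k,n)=(2,7)$ and $(3,8)$, we can pick a unit vector $w\in P_{0}^{\bot}$ with $w\perp v_{0}$ and any unit $u\in P_{0}$. Taking $A=v_{0}u^{t}$ and $B=wu^{t}$ (rank-one maps), an elementary multiplication yields $AB^{t}=v_{0}w^{t}$ and $BA^{t}=wv_{0}^{t}$, hence $(AB^{t}-BA^{t})v_{0}=-w\neq 0$. This contradicts the existence of a parallel section of $E_{k,n}^{1}$ on any open set containing $P_{0}$, proving the proposition for both bundles simultaneously.

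The only nonroutine step is establishing the explicit curvature formula for the canonical connection on $E_{k,n}$ via the symmetric-space formalism; once this bookkeeping is done, the obstruction to parallel sections reduces to the trivial linear-algebra fact that $SO(5)$ fixes no nonzero vector in $\mathbb{R}^{5}$.
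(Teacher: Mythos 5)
Your proposal is correct, but it takes a genuinely different route from the paper. The paper's argument is a direct parallel-transport computation: it constructs a one-parameter family of curves $\gamma_{a}$ in $G(3,8)$, shows that a parallel section $V$ would have to satisfy $V(\gamma_{a}(t)) = -\sin t\, u_{a} + \cos t\, v$ along each $\gamma_{a}$, and then derives a contradiction by differentiating along the transverse curve $a\mapsto\gamma_{a}(t)$ at a fixed small $t>0$; the $(2,7)$ case is then deduced from the $(3,8)$ case via the totally geodesic embedding $\phi$ and the bundle morphism $\Phi$. You instead phrase the obstruction as nonvanishing curvature of the canonical invariant connection on the associated bundle $E_{k,n}\cong SO(n)\times_{H}\mathbb{R}^{n-k}$, compute $R(X_{A},X_{B}) = AB^{t}-BA^{t}$ acting on the fiber $P_{0}^{\bot}$, and then pick rank-one $A,B$ that act nontrivially on any given $v_{0}$. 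This is cleaner, handles both cases in a single calculation, and in fact proves the stronger statement that $E_{k,n}^{1}\to G(k,n)$ admits no local parallel sections whenever $k\geq 1$ and $n-k\geq 2$. The one step you take for granted is the identification of the projection connection of (\ref{DerivadaCovariante}) with the canonical invariant connection on the homogeneous bundle; this is true and standard, but the paper never presents $E_{k,n}$ as an associated bundle, so a sentence of justification (the projection connection is manifestly $SO(n)$-invariant, and a short representation-theoretic check shows the canonical connection is the unique invariant one) would tighten the argument.
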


\subsection{Orthogonal complex normal sections of the Grassmannian}

We call $ \operatorname{Skew}\left( \mathbb{R}^{8}\right) $ the vector space of all
the skew-symmetric linear operators on $\mathbb{R}^{8}$, endowed with the
inner product whose norm is defined by $\left\Vert T\right\Vert ^{2}=\tfrac{1%
}{6} \operatorname{tr}\left( T^{t}T\right) $. Given $P\in G\left( 2,8\right) $, we set%
\begin{equation}
 \operatorname{Skew}_{P}\left( \mathbb{R}^{8}\right) =\left\{ T\in  \operatorname{Skew}\left( 
\mathbb{R}^{8}\right) \mid \left. T\right\vert _{P}=0\right\} \text{.}
\label{SkewP}
\end{equation}%
Note that if $T\in  \operatorname{Skew}_{P}\left( \mathbb{R}^{8}\right) $, then $%
T\left( P^{\bot }\right) \subset P^{\bot }$. Then, $ \operatorname{Skew}_{P}\left( 
\mathbb{R}^{8}\right) $ can be canonically identified with the set of all
the skew-symmetric operators defined on $P^{\bot }$.

The canonical projection%
\begin{equation}
E=_{\text{def}}\left\{ \left( P,T\right) \mid P\in G\left( 2,8\right) \;%
\text{ and }\;T\in  \operatorname{Skew}_{P}\left( \mathbb{R}^{8}\right) \right\}
\rightarrow G\left( 2,8\right)  \label{E2,8}
\end{equation}%
admits a Riemannian vector bundle structure (the fiber inherits the inner
product from $ \operatorname{Skew}\left( \mathbb{R}^{8}\right) $). On this bundle we
have a canonical metric connection, whose associated covariant derivative is
the following: If $t\mapsto P_{t}\in G\left( 2,8\right) $ is a smooth curve
of oriented planes in $\mathbb{R}^{8}$ and $t\mapsto T_{t}$ is a smooth
curve in $ \operatorname{Skew}\left( \mathbb{R}^{8}\right) $ with $T_{t}\ $vanishing
on $P_{t}$, then 
\begin{equation}
\dfrac{D}{dt}\left( P_{t},T_{t}\right) =\left( P_{t},\Pi _{P_{t}}\left( 
\frac{d}{dt}T_{t}\right) \right) \text{,}  \label{DerCovJ}
\end{equation}%
where $\Pi _{P}$ is the orthogonal projection of $ \operatorname{Skew}\left( \mathbb{R%
}^{8}\right) $ onto $ \operatorname{Skew}_{P}\left( \mathbb{R}^{8}\right) $.

We denote by $E^{1}=\left\{ \left( P,T\right) \in E\mid \left\Vert
T\right\Vert =1\right\} \rightarrow G\left( 2,8\right) $, the spherical
subbundle of $E\rightarrow G\left( 2,8\right) $.

From \cite{Fei} we learned of a distinguished section of $E^{1}$, associated
with the triple cross product, defined by%
\begin{equation}
\mathfrak{J}:G\left( 2,8\right) \rightarrow E^{1}\text{,\ \ \ \ \ \ \ \ \ \
\ }\mathfrak{J}\left( u\wedge v\right) =\left( u\wedge v,J_{u\wedge
v}\right) \text{,}  \label{Jota}
\end{equation}%
where $J_{u\wedge v}\in  \operatorname{Skew}_{u\wedge v}\left( \mathbb{R}^{8}\right) $
is given by $J_{u\wedge v}\left( w\right) =X_{3,8}\left( u,v,w\right) $ and
may be thought of as an orthogonal linear complex transformation on $\left(
u\wedge v\right) ^{\bot }$.

Our main result in relation to $\mathfrak{J}$ is as follows.

\begin{theorem}
\label{Jarmo}The section $\mathfrak{J}$ is a harmonic map.
\end{theorem}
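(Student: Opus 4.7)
My plan is to verify the tension field equations for $\mathfrak{J}:G(2,8)\to (E^{1},g^{S})$, where $g^{S}$ is the Sasaki metric associated with the connection in (\ref{DerCovJ}). By the standard decomposition of the tension field of a unit section of a Riemannian vector bundle with a metric connection, $\mathfrak{J}$ is a harmonic map if and only if two conditions hold at every $P\in G(2,8)$: the \emph{vertical} equation $\nabla^{*}\nabla \mathfrak{J}(P)=\|\nabla \mathfrak{J}(P)\|^{2}\,\mathfrak{J}(P)$ (equivalent to $\mathfrak{J}$ being critical for the total bending $\mathcal{B}$), and a \emph{horizontal} equation of the form $\sum_{i} R^{E}(\nabla_{e_{i}}\mathfrak{J},\mathfrak{J})\,e_{i}=0$, where $R^{E}$ is the curvature of the bundle $E\to G(2,8)$. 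Both equations are tensorial, so it is enough to check them at a single convenient point.

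To reduce the computation I would exploit the symmetry of $X_{3,8}$. The group $\operatorname{Spin}(7)\subset SO(8)$ of linear isometries of $\mathbb{R}^{8}=\mathbb{O}$ preserving $X_{3,8}$ acts transitively on $G(2,8)$ with $9$-dimensional isotropy $H$ at $P_{0}=\operatorname{span}(1,i)$: indeed, $\operatorname{Spin}(7)$ is already transitive on $S^{7}$ with stabilizer $G_{2}$, and $G_{2}$ acts transitively on $S^{6}\subset\operatorname{Im}\mathbb{O}$. The action lifts to an isometric action on $(E^{1},g^{S})$ that preserves the connection, so $\mathfrak{J}$ is $\operatorname{Spin}(7)$-equivariant, and consequently $\nabla^{*}\nabla \mathfrak{J}$ and the horizontal curvature sum are $\operatorname{Spin}(7)$-equivariant as well. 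It is therefore enough to verify the two conditions at $P_{0}$.

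At $P_{0}$ I would fix an octonionic orthonormal basis $e_{1},\dots,e_{8}$ of $\mathbb{O}$ (with $e_{1}=1$, $e_{2}=i$) and take the adapted orthonormal basis of $T_{P_{0}}G(2,8)$ given by the infinitesimal rotations $Y_{a,b}$, $a\in\{1,2\}$, $b\in\{3,\dots,8\}$, that send $e_{a}\mapsto e_{b}$, $e_{b}\mapsto -e_{a}$ and fix the rest. For each $Y_{a,b}$ I differentiate the curve $t\mapsto J_{P_{t}}$ along the geodesic $P_{t}$ in direction $Y_{a,b}$ and apply (\ref{DerCovJ}) to project the derivative onto $\operatorname{Skew}_{P_{0}}(\mathbb{R}^{8})$, expressing the result in terms of octonion products via (\ref{X3,8}). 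Differentiating once more and summing over the basis yields $\nabla^{*}\nabla \mathfrak{J}(P_{0})$. A representation-theoretic shortcut simplifies the vertical condition: since $H$ acts on $P_{0}^{\bot}\cong\mathbb{C}^{3}$ through the complex structure $J_{P_{0}}$ and (I expect) leaves only the multiples of $J_{P_{0}}$ fixed inside $\operatorname{Skew}_{P_{0}}(\mathbb{R}^{8})$, the output $\nabla^{*}\nabla \mathfrak{J}(P_{0})$ is automatically proportional to $J_{P_{0}}$, and the Bochner-type identity $\langle \nabla^{*}\nabla \mathfrak{J},\mathfrak{J}\rangle=\|\nabla \mathfrak{J}\|^{2}$, valid for any unit section, pins down the constant.

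The main obstacle is the horizontal equation. Applying the same equivariance logic, the horizontal sum lies in the $H$-fixed subspace of $T_{P_{0}}G(2,8)\cong P_{0}\otimes P_{0}^{\bot}\cong \mathbb{C}\otimes\mathbb{C}^{3}$; if this $H$-module has no nonzero invariants, the sum vanishes automatically, and otherwise the surviving scalar must be computed directly. Either way the verification reduces to explicit identities for the derivative of $X_{3,8}$ along deformations of its arguments, equivalently to symmetries of the Cayley $4$-form $\Phi(u,v,w,x)=\langle X_{3,8}(u,v,w),x\rangle$. The equivariance drastically cuts the bookkeeping, but the octonionic computation at the single base point still has to be carried out carefully; this is where I expect the bulk of the work to sit.
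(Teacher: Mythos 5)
Your proposal uses the same overall framework as the paper: the split of the harmonic--map equations for a unit section into a vertical (rough--Laplacian) condition and a horizontal curvature condition (this is exactly Proposition \ref{PacoCarmelo} from \cite{paco}), together with $\operatorname{Spin}(7)$-equivariance of $\mathfrak{J}$ to reduce everything to a single base point. Where you diverge is in \emph{how} the two conditions are verified at $e_{0}\wedge e_{1}$. The paper does this by brute force: Lemmas \ref{lema1J}--\ref{lema3J} compute $\nabla_{e_{j}^{\ell}}\mathfrak{J}$ and the second covariant derivatives through explicit octonion identities, Lemma \ref{curvaturaJ} extracts the curvature, and the proof of Theorem \ref{Jarmo} shows termwise that the $1$-form $\mathcal{R}_{\mathfrak{J}}$ vanishes. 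You instead propose an isotropy--representation argument: if the isotropy subgroup $H\subset\operatorname{Spin}(7)$ at $P_{0}$ has as its only fixed vectors in $\operatorname{Skew}_{P_{0}}(\mathbb{R}^{8})$ the multiples of $J_{P_{0}}$, and no nonzero fixed vectors in $T_{P_{0}}G(2,8)$, then both equations hold automatically by equivariance.

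This representation-theoretic shortcut is in fact viable and would make for a cleaner proof, but in your write-up it is left at the level of an expectation. To close it you would need to identify $H$ precisely and check the invariants: $H\cong U(3)$, embedded so that its action on $P_{0}^{\bot}\cong\mathbb{C}^{3}$ is the standard representation and its action on $P_{0}\cong\mathbb{C}$ is through the determinant character. With that in hand, $\operatorname{Skew}_{P_{0}}(\mathbb{R}^{8})\cong\mathfrak{so}(6)=\mathfrak{u}(3)\oplus\mathfrak{m}$, the $U(3)$-invariants of $\mathfrak{u}(3)$ under $\operatorname{Ad}$ are exactly the center $\mathbb{R}\,J_{P_{0}}$, and $\mathfrak{m}$ (the isotropy module of $SO(6)/U(3)$) is irreducible with no trivial summand; and $T_{P_{0}}G(2,8)\cong\operatorname{Hom}_{\mathbb{R}}(P_{0},P_{0}^{\bot})$ decomposes into two complex-irreducible $U(3)$-modules (the standard representation twisted by $\det^{\pm 1}$), neither trivial. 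So the vertical condition reduces to proportionality $\Delta\mathfrak{J}(P_{0})\in\mathbb{R}\,J_{P_{0}}$ and the horizontal $1$-form $\mathcal{R}_{\mathfrak{J}}$ vanishes identically, with no need to open any octonionic identity. What the paper's computational route buys, by contrast, is the explicit eigenvalue $\Delta\mathfrak{J}=-8\,\mathfrak{J}$ and a proof that does not depend on correctly identifying the isotropy module structure. Your argument is correct in outline; the gap is that the two $H$-invariance claims are asserted (``I expect'') rather than established, and they are exactly the content on which the shortcut rests.
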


For the other cross products, orthogonal complex normal sections of
Grassmannians can be constructed in an analogous manner. Apart from empty or
trivial cases, only one remains, which corresponds, via an appropriate
identification, with the canonical almost complex structure of the sphere $%
S^{6}$, whose harmonicity is known from \cite{CP} (see also \cite{BLS}).
More details are given in Subsection \ref{final}.

\section{Preliminaries\label{preliminares}}

\subsection{Critical sections for the total bending and the energy}

We recall from Proposition 3.2 in \cite{paco} necessary and
sufficient conditions for a section of a sphere bundle to be critical for
the total bending or the energy functionals. Let $\pi :E\rightarrow M$ be a
Riemannian vector bundle with a metric connection over a compact oriented
Riemannian manifold. The \emph{rough Laplacian} is the operator $\Delta $
acting on smooth sections of $E$ defined by%
\begin{equation}
\Delta \sigma =  \operatorname{tr}\left( \nabla ^{2}\sigma \right) \text{,}
\label{Burdo}
\end{equation}%
where $\nabla _{X,Y}^{2}\sigma =\nabla _{X}\nabla _{Y}\sigma -\nabla
_{\nabla _{X}Y}\sigma $ for local vector fields $X,Y$ on $M$. We consider
the curvature tensor to be defined by $R_{X,Y}\sigma =\nabla _{\left[ X,Y%
\right] }+\left[ \nabla _{Y},\nabla _{X}\right] $.

\begin{proposition}
\label{PacoCarmelo} \emph{\cite{paco}} Let $\Pi :E\rightarrow M$ be a
Riemannian vector bundle with a metric connection $\nabla $ over an oriented
compact Riemannian manifold and let $\sigma :M\rightarrow E^{1}$ be a
section of the corresponding sphere bundle.

\emph{a)} The section $\sigma $ is vertically harmonic if and only if there
is a smooth real function $f$ on $M$ such that%
\begin{equation}
\Delta \sigma =f\sigma \text{.}  \label{NablaSigma}
\end{equation}

\emph{b)} Suppose additionally that $E$ is endowed with the Sasaki metric.
The section $\sigma $ is a harmonic map if and only if $\sigma $ is
vertically harmonic and also $\mathcal{R}_{\sigma }\equiv 0$, where $%
\mathcal{R}_{\sigma }$ is the $1$-form on $M$ defined by%
\begin{equation}
\mathcal{R}_{\sigma }\left( X\right) =\sum\nolimits_{i=1}^{m}\left\langle
R_{X,e_{i}}\sigma ,\nabla _{e_{i}}\sigma \right\rangle \text{,}
\label{Rsigma}
\end{equation}%
for any vector field $X$\thinspace over $M$, with $\left\{ e_{1},\dots
,e_{m}\right\} $ a local orthonormal frame.
\end{proposition}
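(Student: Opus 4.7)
The plan is to compute the first variation of the relevant functionals and read off the Euler-Lagrange conditions. Throughout, I will use that for sections $\sigma$ and vector fields $V$ along $\sigma$ into $E$, the rough Laplacian $\Delta$ defined in \eqref{Burdo} satisfies the integration-by-parts identity $\int_{M}\langle\nabla V,\nabla\sigma\rangle\,dv=-\int_{M}\langle V,\Delta\sigma\rangle\,dv$, which follows from Stokes' theorem applied to the 1-form $X\mapsto\langle V,\nabla_{X}\sigma\rangle$ together with the compatibility of $\nabla$ with the fiber metric.

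For part (a), I would take a smooth one-parameter family of sections $\sigma_{s}:M\to E^{1}$ with $\sigma_{0}=\sigma$ and variation field $V=\partial_{s}\sigma_{s}|_{s=0}$. Differentiating $\|\sigma_{s}\|^{2}\equiv 1$ gives $\langle V,\sigma\rangle\equiv 0$ pointwise. A direct computation yields
\begin{equation*}
\tfrac{d}{ds}\big|_{s=0}\mathcal{B}(\sigma_{s})=2\int_{M}\langle\nabla V,\nabla\sigma\rangle\,dv=-2\int_{M}\langle V,\Delta\sigma\rangle\,dv.
\end{equation*}
Since $V$ ranges over all smooth sections of the pullback bundle that are pointwise orthogonal to $\sigma$ (and any such $V$ arises as a variation field by integrating the flow on each fiber sphere), the integral vanishes for every admissible $V$ if and only if $\Delta\sigma$ is pointwise a scalar multiple of $\sigma$, i.e.\ $\Delta\sigma=f\sigma$ for some smooth $f$ on $M$.

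For part (b), I would view $\sigma$ as a map into $E$ with the Sasaki metric and decompose its tension field. The Sasaki splitting $T_{(P,v)}E=\mathcal{H}\oplus\mathcal{V}$ identifies $\mathcal{H}$ with $T_{P}M$ via the connection and $\mathcal{V}$ with the fiber of $E$. Writing an arbitrary variation field of $\sigma$ as a map into $E^{1}$ as $V=X^{h}+W^{v}$, where $X$ is a vector field on $M$ and $W$ is a section of $\sigma^{*}E$ with $\langle W,\sigma\rangle=0$, the first variation of the energy of $\sigma$ into $(E^{1},\text{Sasaki})$ splits accordingly into a ``vertical'' piece (reproducing the computation of part (a) and yielding the condition $\Delta\sigma=f\sigma$) and a ``horizontal'' piece coming from the curvature of the connection. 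The horizontal contribution can be computed using the standard O'Neill-type identities for the Sasaki metric: differentiating the frame along the variation, the curvature term emerges as
\begin{equation*}
-\int_{M}\sum\nolimits_{i}\bigl\langle R_{X,e_{i}}\sigma,\nabla_{e_{i}}\sigma\bigr\rangle\,dv=-\int_{M}\mathcal{R}_{\sigma}(X)\,dv,
\end{equation*}
with $\mathcal{R}_{\sigma}$ as in \eqref{Rsigma}. Vanishing for all $X$ forces $\mathcal{R}_{\sigma}\equiv 0$, and combined with vertical harmonicity this proves (b).

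The main obstacle I anticipate is the Sasaki-metric computation of the horizontal component of the tension field: one must carry out carefully the bookkeeping between the Levi-Civita connection of the Sasaki metric on $E$ and the original connection $\nabla$, showing that cross-terms between horizontal and vertical variations assemble precisely into $\mathcal{R}_{\sigma}$. Once that identification is established, the two conditions decouple cleanly and both parts of the proposition follow from the standard variational argument; everything else is bookkeeping with integration by parts and the unit-length constraint.
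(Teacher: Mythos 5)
The paper does not actually prove Proposition \ref{PacoCarmelo}; it cites Proposition 3.2 of \cite{paco}, so there is no in-paper argument to compare against. Your first-variation approach is the standard and correct route. Part (a) is essentially complete: the integration-by-parts identity $\int_M\langle\nabla V,\nabla\sigma\rangle\,dv=-\int_M\langle V,\Delta\sigma\rangle\,dv$, the pointwise constraint $\langle V,\sigma\rangle=0$, and the fact that every such $V$ is realized as a variation field (e.g.\ $\sigma_s(p)=\exp^{E^1_p}_{\sigma(p)}(sV(p))$ in the fibre sphere) give exactly $\Delta\sigma=f\sigma$; one could also note $f$ is forced to be $-\|\nabla\sigma\|^2$ by differentiating $\|\sigma\|^2\equiv1$ twice.

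For part (b) the strategy --- split the tension field of $\sigma:M\to(E^1,\text{Sasaki})$ into vertical and horizontal components, recover (a) from the vertical piece, and match the horizontal piece with $\mathcal{R}_\sigma$ --- is the right one. But be aware that your displayed identity $-\int_M\sum_i\langle R_{X,e_i}\sigma,\nabla_{e_i}\sigma\rangle\,dv=-\int_M\mathcal{R}_\sigma(X)\,dv$ is just the definition of $\mathcal{R}_\sigma$; what actually needs proof is that the first variation of the Sasaki energy in a horizontal direction $X^h$ equals this quantity, equivalently that the horizontal component of the tension field is the metric dual of $\pm\mathcal{R}_\sigma$. That is not a one-line consequence of ``O'Neill-type identities'': one has to compute the Levi-Civita connection of the Sasaki metric on $E$ (its horizontal part involves the curvature $R$ of $\nabla$), restrict to the hypersurface $E^1$, and take the trace of $\nabla d\sigma$. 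That computation is the real content of the cited Proposition 3.2 in \cite{paco}, and as written your proposal asserts rather than establishes it --- a genuine (though acknowledged) gap. The remaining bookkeeping (compactness for Stokes, fundamental lemma to pass from vanishing integrals to pointwise vanishing of $\mathcal{R}_\sigma$) is fine.
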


The condition in (a) was proved for the particular case where $E$ is a
tangent bundle by Wiegmink \cite{wiegmink} and Wood \cite{wood} for compact
manifolds and in general by Gil-Medrano \cite{olga}. One can find in \cite%
{Grigo} a recent generalization for an octonionic bundle over a $G_{2}$%
-manifold.

\subsection{Cross products\label{PCruz}}

We review the definition of the octonions and properties of the cross
products, based mainly on Chapter 6 of \cite{harvey}. The octonions $\mathbb{%
O}$ are a normed division algebra over the real numbers which is neither
commutative nor associative. Explicitly (see 17.2 of \cite{Sabinin}), the
octonions are Euclidean space $\mathbb{R}^{8}$ with its canonical inner
product and with a multiplication $\mathbb{R}^{8}\times \mathbb{R}%
^{8}\rightarrow \mathbb{R}^{8}$ extending bilinearly the one defined in the
canonical basis $\left\{ e_{i}\mid i=0,\dots ,7\right\} $ by $%
e_{0}e_{i}=e_{i}e_{0}=e_{i}$ for $i=0,...,7$ (in particular, $e_{0}$ is the
unity of the product), and also%
\begin{equation}
e_{i}e_{j}=-\delta _{ij}e_{0}+\varepsilon _{ijk}e_{k}\text{,}  \label{tabla1}
\end{equation}%
for $i,j,k\geq 1$, where $\varepsilon _{ijk}$ is a completely skew symmetric
tensor taking the value $+1$ when 
\begin{equation*}
ijk=123,145,176,246,257,347,365\text{.}
\end{equation*}

We recall some basic properties of the octonions that will be used
frequently in the following sections. If $\left\{ u,v\right\} $ is an
orthogonal subset of $ \operatorname{Im}\mathbb{O}$, then $\left\{ u,v,uv\right\} $
is orthogonal as well and the identities $uv=-vu$ and $u\left( uv\right) =-v$
hold. Also, for all orthogonal $u,v,w\in \mathbb{O}$ we have that $-\left( 
\bar{w}v\right) \bar{u}=\left( \bar{u}v\right) \bar{w}$ (it can be deduced
from Corollary 6.13 in \cite{harvey}).

Next we mention a property of the triple cross product (\ref{X3,8}), see for
instance (5.22) in \cite{salamon}: For any orthonormal set $\left\{
z,u,v,w\right\} $ of $\mathbb{R}^{8}$ we have that%
\begin{equation}
\left\langle X_{3,8}\left( u,v,w\right) ,z\right\rangle =-\left\langle
X_{3,8}\left( z,v,w\right) ,u\right\rangle \text{.}  \label{propiedadX}
\end{equation}%
Also, 
\begin{equation}
e_{i}\times e_{j}=X_{3,8}\left( e_{0},e_{i},e_{j}\right)  \label{DobleTriple}
\end{equation}%
holds for all $i,j=0,\dots ,7$. In particular, by equation (\ref{tabla1})
and the definition of the double cross product, we have $X_{3,8}\left(
e_{0},e_{1},e_{2}\right) =e_{1}\times e_{2}=e_{3}$.

\subsection{Grassmannians of oriented subspaces}

Let $G\left( k,n\right) $ be as before the set of all oriented vector
subspaces of dimension $k$ in $\mathbb{R}^{n}$. Given an orthonormal subset $%
\left\{ u_{1},\dots ,u_{k}\right\} \subset \mathbb{R}^{n}$, we define the
oriented subspace%
\begin{equation*}
u_{1}\wedge \dots \wedge u_{k}=\left(  \operatorname{span}\left\{ u_{1},\dots
,u_{k}\right\} ,u^{1}\wedge \dots \wedge u^{k}\right) \text{,}
\end{equation*}%
where $\left\{ u^{1},\dots ,u^{k}\right\} $ is the dual basis of $\left\{
u_{1},\dots ,u_{k}\right\} $. Then $G\left( k,n\right) $ consists of all $%
u_{1}\wedge \dots \wedge u_{k}$, with orthonormal $u_{1},\dots ,u_{k}\in 
\mathbb{R}^{n}$.

We consider on $G\left( k,n\right) $ the normal Riemannian metric and
present it as usual as the homogeneous symmetric space $G/H$, where $G=SO(n)$
and $H=SO(k)\times SO(n-k)$ is the isotropy subgroup at $e_{0}\wedge
...\wedge e_{k-1}$. Let $\mathfrak{g}$ and $\mathfrak{h}$ be the Lie
algebras of $G$ and $H$, respectively, and let $\mathfrak{g}=\mathfrak{h}%
\oplus \mathfrak{m}$ be the corresponding Cartan decomposition. We have the
canonical identification of $\mathfrak{m}$ with $T_{e_{0}\wedge ...\wedge
e_{k-1}}G\left( k,n\right) $ and it is well-known that the geodesics of $%
G\left( k,n\right) $ through $e_{0}\wedge ...\wedge e_{k-1}$ are exactly the
curves $t\mapsto \exp \left( tZ\right) e_{0}\wedge ...\wedge e_{k-1}$ for $%
Z\in \mathfrak{m}$. In particular, for $0\leq \ell <k$, $k\leq j<n$, the
curves $\gamma _{j}^{\ell }:\mathbb{R}\rightarrow G\left( k,n\right) $ given
by 
\begin{equation}
\gamma _{j}^{\ell }(t)=e_{0}\wedge \dots \wedge \left( \cos t\text{ }e_{\ell
}+\sin t\text{ }e_{j}\right) \wedge \dots \wedge e_{k-1}  \label{geodegrass}
\end{equation}%
($\cos t$ $e_{\ell }+\sin t$ $e_{j}$ occupies the $\ell $-th place in the
exterior product) are geodesics of $G\left( k,n\right) $. Their initial
velocities are%
\begin{equation}
e_{j}^{\ell }=e_{j}\otimes e^{\ell }-e_{\ell }\otimes e^{j}\in \mathfrak{m}%
\subset \mathfrak{o}\left( n\right)  \label{eELEj}
\end{equation}%
and form an orthonormal basis of $T_{e_{0}\wedge ...\wedge e_{k-1}}G\left(
k,n\right) $.

\section{Harmonic unit normal sections of the Grassmannians associated with
cross products}

Theorem \ref{general} above states that the sections of the spherical bundle 
$E_{k,n}^{1}\rightarrow G\left( k,n\right) $ associated with cross products
are harmonic maps. By Subsection \ref{crossproduct} we must only study the
cases $\left( m,m+1\right) $, $\left( 1,2m\right) $, $\left( 2,7\right) $
and $\left( 3,8\right) $. The first one is trivial and we addressed the
second one in Remark \ref{Hopf}. The remaining cases, namely $\left(
3,8\right) $ and $\left( 2,7\right) $, will be dealt with in two subsections
below.

\medskip

On the vector bundle $\Pi :E_{k,n}\rightarrow G\left( k,n\right) $ given in (%
\ref{bundle}) we have a canonical\emph{\ metric connection }$\nabla $,
defined as follows. If $Y$ is a vector field on $G\left( k,n\right) $ and $%
\sigma \in \Gamma \left( G\left( k,n\right) ,E_{k,n}\right) $, then%
\begin{equation*}
\left( \nabla _{Y}\sigma \right) _{P}=\left( P,\pi _{P}\left( \left(
dx_{\sigma }\right) _{P}\left( Y_{P}\right) \right) \right) \text{,}
\end{equation*}%
where $\sigma \left( Q\right) =\left( Q,x_{\sigma }\left( Q\right) \right)
\in \left( E_{k,n}\right) _{Q}$, with $x_{\sigma }:G\left( k,n\right)
\rightarrow \mathbb{R}^{n}$, and $\pi _{P}$ is the orthogonal projection of $%
\mathbb{R}^{n}$ onto $P^{\bot }$. The associated\emph{\ covariant} \emph{%
derivative} turns out to be the one in (\ref{DerivadaCovariante}).

\smallskip

Next we present a lemma that will be useful later.

\begin{lemma}
\label{derivadask,n}Let $P:\mathbb{R}^{2}\rightarrow G\left( k,n\right) $ be
a parametrized surface and let $\sigma =\left(  \operatorname{id},x\right) $ be a
smooth section of $\Pi :E_{k,n}^{1}\rightarrow G\left( k,n\right) $, with $%
x:G\left( k,n\right) \rightarrow \mathbb{R}^{n}$. We denote%
\begin{equation*}
S_{1}=\left. \frac{d}{ds}\right\vert _{0}x\left( P_{0,s}\right) \text{ }\ \
\ \ \text{and }\ \ \ \ S_{2}=\left. \frac{\partial ^{2}}{\partial t\partial s%
}\right\vert _{\left( 0,0\right) }x\left( P_{t,s}\right) \text{,}
\end{equation*}%
and let $\pi _{t}$ be the orthogonal projection of $\mathbb{R}^{n}$ onto $%
\left( P_{t,0}\right) ^{\bot }$. Then%
\begin{equation}
\left. \frac{D}{ds}\right\vert _{0}\sigma \left( P_{t,s}\right) =\left(
P_{t,0},\pi _{t}\circ \left. \frac{d}{ds}\right\vert _{0}x_{P_{t,s}}\right) 
\text{,}  \label{DS1}
\end{equation}%
\begin{equation}
\left. \frac{D^{2}}{dtds}\right\vert _{\left( 0,0\right) }\sigma \left(
P_{t,s}\right) =\left( P_{0,0},\pi _{0}\circ \left( \pi _{0}^{\prime }\circ
S_{1}+S_{2}\right) \right) \text{,}  \label{DS2}
\end{equation}%
where $\pi _{0}^{\prime }$ denotes the derivative at $t=0$ of the function $%
t\mapsto \pi _{t}\in \mathbb{R}^{n\times n}$.
\end{lemma}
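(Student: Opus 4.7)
The plan is to prove both formulas by direct application of the definition of the covariant derivative (\ref{DerivadaCovariante}), with the second one requiring an additional application of the product rule in $\mathbb{R}^{n}$.

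For (\ref{DS1}), I would fix $t$ and view $s\mapsto \sigma(P_{t,s})=(P_{t,s},x(P_{t,s}))$ as a section along the curve $s\mapsto P_{t,s}$ in $G(k,n)$. The formula (\ref{DerivadaCovariante}) says that $\frac{D}{ds}|_{0}\sigma(P_{t,s})$ equals the pair consisting of the basepoint $P_{t,0}$ and the orthogonal projection onto $(P_{t,0})^{\bot}$ of the usual $\mathbb{R}^{n}$-derivative $\frac{d}{ds}|_{0}x(P_{t,s})$. This gives (\ref{DS1}) immediately. Note that one does \emph{not} need $\frac{d}{ds}|_{0}x(P_{t,s})$ to lie in $(P_{t,0})^{\bot}$ a priori; the projection $\pi_{t}$ is what realizes the covariant derivative.

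For (\ref{DS2}), the idea is to regard the right-hand side of (\ref{DS1}), viewed as a function of $t$, as a section of $E_{k,n}$ along the curve $t\mapsto P_{t,0}$, and then apply (\ref{DerivadaCovariante}) once more. Writing $y(t)=\frac{d}{ds}|_{0}x(P_{t,s})\in\mathbb{R}^{n}$, the section in question is $t\mapsto(P_{t,0},\pi_{t}(y(t)))$. Its $\mathbb{R}^{n}$-derivative at $t=0$ is, by the product rule, $\pi_{0}'(y(0))+\pi_{0}(y'(0))$. Since $y(0)=S_{1}$ and $y'(0)=\frac{d}{dt}|_{0}\frac{d}{ds}|_{0}x(P_{t,s})=S_{2}$ by equality of mixed partials in $\mathbb{R}^{n}$, this derivative equals $\pi_{0}'(S_{1})+\pi_{0}(S_{2})$. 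Applying the projection $\pi_{0}$ onto $(P_{0,0})^{\bot}$ and using $\pi_{0}\circ\pi_{0}=\pi_{0}$ yields $\pi_{0}(\pi_{0}'(S_{1}))+\pi_{0}(S_{2})=\pi_{0}\circ(\pi_{0}'\circ S_{1}+S_{2})$, which is (\ref{DS2}).

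There is no substantive obstacle here: the lemma is essentially an unpacking of the definition of the induced connection together with the product rule. The only point to be careful about is the interpretation of the symbol $\pi_{0}'$, which is the $t$-derivative at $0$ of the matrix-valued map $t\mapsto\pi_{t}$ (a linear endomorphism of $\mathbb{R}^{n}$), so that $\pi_{0}'\circ S_{1}$ denotes the application of this linear map to the vector $S_{1}$; with this understood, the two-line product-rule computation closes the argument.
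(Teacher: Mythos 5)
Your proof is correct and follows essentially the same route as the paper: apply the definition of the induced covariant derivative~(\ref{DerivadaCovariante}) once in $s$ to get~(\ref{DS1}), then once more in $t$ to the resulting section along $t\mapsto P_{t,0}$, using the product rule and $\pi_0\circ\pi_0=\pi_0$. (One minor remark: $S_2$ is by definition $\tfrac{d}{dt}\big|_0\,\tfrac{d}{ds}\big|_0\,x(P_{t,s})$, so the appeal to equality of mixed partials is not actually needed.)
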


\begin{proof}
The first assertion is an immediate consequence of (\ref{DerivadaCovariante}%
) and (\ref{DS2}) follows from%
\begin{equation*}
\pi _{0}\left( \left. \tfrac{d}{dt}\right\vert _{0}\left( \pi _{t}\circ
\left. \tfrac{d}{ds}\right\vert _{0}x\left( P_{t,s}\right) \right) \right)
=\pi _{0}\circ \left( \pi _{0}^{\prime }\circ S_{1}+\pi _{0}\circ
S_{2}\right) \text{.}\qedhere
\end{equation*}
\end{proof}

\subsection{Harmonicity of $\protect\sigma _{3}$}

We recall from (\ref{geodegrass}) the geodesics $\gamma _{j}^{\ell }$ of $%
G\left( 3,8\right) $ and their initial velocities $e_{j}^{\ell }$ ($\ell
=0,1,2$ and $j=3,\dots ,7$). We denote by $E_{j}^{\ell }$ the vector field
on a normal neighborhood of $e_{0}\wedge e_{1}\wedge e_{2}$ in $G\left(
3,8\right) $, such that $E_{j}^{\ell }\left( e_{0}\wedge e_{1}\wedge
e_{2}\right) =e_{j}^{\ell }$ and is parallel along the radial geodesics
starting from $e_{0}\wedge e_{1}\wedge e_{2}$. Since the Levi Civita
connection of $G\left( 3,8\right) $ is torsion free, for all $k,\ell ,i,j$
we have%
\begin{equation}
\left[ E_{i}^{k},E_{j}^{\ell }\right] \left( e_{0}\wedge e_{1}\wedge
e_{2}\right) =0\text{.}  \label{corIgualCero}
\end{equation}

From now on in this section, to simplify the notation, we sometimes write $X$
instead of $X_{3,8}$ and omit the foot points of elements of $E_{k,n}^{1}$.

\begin{lemma}
\label{dercov2}Let $\sigma $ be a section of $E_{3,8}^{1}$. For $k,\ell
=0,1,2$ and $i,j=3,\dots ,7$ we have%
\begin{equation*}
\nabla _{e_{i}^{k}}\nabla _{E_{j}^{\ell }}\sigma _{3}=\left. \frac{D^{\gamma
_{i}^{k}}}{dt}\right\vert _{0}\left( \left. \frac{D}{ds}\right\vert
_{0}\sigma _{3}\left( P_{t,s}\right) \right) \text{,}
\end{equation*}%
where $P_{t,s}=\exp \left( te_{i}^{k}\right) \,\gamma _{j}^{\ell }\left(
s\right) $.
\end{lemma}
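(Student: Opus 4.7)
The plan is to reduce the assertion to two successive applications of the defining property of the covariant derivative along a curve. For any smooth section $\tau$ of $E_{3,8}$, any vector field $Y$ on $G(3,8)$, and any point $p$, one has $(\nabla_{Y}\tau)(p)=\frac{D}{ds}\big|_{0}\tau(c(s))$ whenever $c$ is a smooth curve with $c(0)=p$ and $c'(0)=Y_{p}$. Setting $V(t):=(\nabla_{E_{j}^{\ell}}\sigma_{3})(\gamma_{i}^{k}(t))$ and applying this principle to the outer derivative along the geodesic $\gamma_{i}^{k}$ at $t=0$ yields $\nabla_{e_{i}^{k}}\nabla_{E_{j}^{\ell}}\sigma_{3}=\frac{D^{\gamma_{i}^{k}}}{dt}\big|_{0}V(t)$. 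Hence the whole task reduces to identifying $V(t)$ with $\frac{D}{ds}\big|_{0}\sigma_{3}(P_{t,s})$ for each (small) $t$.

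To do this, I will check that, for each fixed $t$, the $s$-curve $s\mapsto P_{t,s}=\exp(te_{i}^{k})\,\gamma_{j}^{\ell}(s)$ passes through $\gamma_{i}^{k}(t)$ at $s=0$ with initial velocity equal to $E_{j}^{\ell}(\gamma_{i}^{k}(t))$. The first claim is immediate from $\gamma_{j}^{\ell}(0)=e_{0}\wedge e_{1}\wedge e_{2}=:p_{0}$, which gives $P_{t,0}=\exp(te_{i}^{k})\cdot p_{0}=\gamma_{i}^{k}(t)$. For the second, one uses the symmetric-space structure of $G(3,8)=SO(8)/\bigl(SO(3)\times SO(5)\bigr)$: parallel transport of a tangent vector along the geodesic $t\mapsto \exp(tZ)\cdot p_{0}$ is realized by the differential of the isometry $\exp(tZ)$. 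Consequently, since $E_{j}^{\ell}$ is by construction the parallel extension of $e_{j}^{\ell}$ along $\gamma_{i}^{k}$, we have $E_{j}^{\ell}(\gamma_{i}^{k}(t))=d(\exp(te_{i}^{k}))_{p_{0}}(e_{j}^{\ell})$, which is exactly $\frac{d}{ds}\big|_{0}\exp(te_{i}^{k})\,\gamma_{j}^{\ell}(s)$, i.e., the initial $s$-velocity of $P_{t,s}$.

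With both conditions verified, a second application of the defining property of the covariant derivative gives $V(t)=\frac{D}{ds}\big|_{0}\sigma_{3}(P_{t,s})$; substituting this into the expression for $\nabla_{e_{i}^{k}}\nabla_{E_{j}^{\ell}}\sigma_{3}$ produces exactly the claimed identity. I do not anticipate any real obstacle: the only non-trivial ingredient is the symmetric-space fact that parallel transport along a geodesic is realized by the corresponding one-parameter subgroup of isometries, and the rest is straightforward bookkeeping with the two nested covariant derivatives (in particular, no Lie-bracket correction appears because $V(t)$ is evaluated on the geodesic $\gamma_{i}^{k}$ itself, so the outer derivative is an ordinary covariant derivative along that curve).
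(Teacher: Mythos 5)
Your proposal is correct and follows essentially the same route as the paper: you reduce to the defining characterization of covariant derivatives along curves and then use the symmetric-space fact that $d(\exp(te_i^k))_{p_0}$ realizes parallel transport along the radial geodesic, so that $\tfrac{d}{ds}\big|_0 P_{t,s}=E_j^\ell(\gamma_i^k(t))$. The intermediate notation $V(t)$ just makes the two-step structure of the argument a bit more explicit than in the paper's one-paragraph proof.
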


\begin{proof}
The expression follows from the facts that $\gamma _{j}^{\ell }$ is the
geodesic of $G\left( 3,8\right) $ beginning at $e_{0}\wedge e_{1}\wedge
e_{2} $ with initial velocity $e_{j}^{\ell }$ and that $s\mapsto P_{t,s}$
takes the value $\gamma _{i}^{k}\left( t\right) $ at $s=0$ with initial
velocity $E_{j}^{\ell }\left( \gamma _{i}^{k}\left( t\right) \right) $.
Indeed, since $G\left( 3,8\right) $ is a symmetric space and $e_{j}^{\ell
}\in \mathfrak{m}$, the parallel transport along the curve $t\mapsto \gamma
_{i}^{k}\left( t\right) =\exp \left( te_{i}^{k}\right) \left( e_{0}\wedge
e_{1}\wedge e_{2}\right) $ between $0$ and $t$ is performed by $d\exp \left(
te_{i}^{k}\right) _{e_{0}\wedge e_{1}\wedge e_{2}}$. Therefore, as desired,%
\begin{equation*}
E_{j}^{\ell }\left( \gamma _{i}^{k}\left( t\right) \right) =d\exp \left(
te_{i}^{k}\right) _{e_{0}\wedge e_{1}\wedge e_{2}}\left( e_{j}^{\ell
}\right) =\left. \tfrac{d}{ds}\right\vert _{0}\exp \left( te_{i}^{k}\right)
\gamma _{j}^{\ell }\left( s\right) \text{.}\qedhere
\end{equation*}
\end{proof}

\medskip

From now on in this section we consider $k$ and $\ell $ $ \operatorname{mod}3$.

\begin{lemma}
\label{lema1}For $\ell =0,1,2$ and $j=3,\dots ,7$ we have that 
\begin{equation*}
\nabla _{e_{j}^{\ell }}\sigma _{3}=X\left( e_{j},e_{\ell +1},e_{\ell
+2}\right) +\delta _{j3}e_{\ell }\text{\ \ \ \ \ and\ \ \ \ \ }\nabla
_{e_{j}^{\ell }}\nabla _{E_{j}^{\ell }}\sigma _{3}=\left( \delta
_{j3}-1\right) e_{3}\text{.}
\end{equation*}%
Moreover, the first expression equals $0$ for $j=3$.
\end{lemma}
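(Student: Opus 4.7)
The plan is to evaluate $\sigma_3$ along the geodesic $\gamma_j^\ell$ from (\ref{geodegrass}), express the pullback in $\mathbb{R}^8$ explicitly using multilinearity of $X_{3,8}$, and then apply the projection formula (\ref{DerivadaCovariante}) for the first assertion and its iterated form from Lemma \ref{derivadask,n} for the second.

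For the first identity, write $x(t) = X_{3,8}(u_0(t), u_1(t), u_2(t))$ with $u_\ell(t) = \cos t\, e_\ell + \sin t\, e_j$ and $u_k(t) = e_k$ for $k \in \{0,1,2\}\setminus\{\ell\}$. Multilinearity together with the total antisymmetry (hence cyclic invariance) of the triple cross product gives
\begin{equation*}
x(t) = \cos t\, X(e_0, e_1, e_2) + \sin t\, X(e_j, e_{\ell+1}, e_{\ell+2}) = \cos t\, e_3 + \sin t\, X(e_j, e_{\ell+1}, e_{\ell+2}),
\end{equation*}
the last equality using (\ref{DobleTriple}). Hence $x'(0) = X(e_j, e_{\ell+1}, e_{\ell+2})$, which is orthogonal to $e_{\ell+1}$ and $e_{\ell+2}$ by the cross product property, while (\ref{propiedadX}) yields
\begin{equation*}
\langle X(e_j, e_{\ell+1}, e_{\ell+2}), e_\ell\rangle = -\langle X(e_\ell, e_{\ell+1}, e_{\ell+2}), e_j\rangle = -\langle e_3, e_j\rangle = -\delta_{j3}.
\end{equation*}
Projecting onto $(e_0 \wedge e_1 \wedge e_2)^\perp$ therefore subtracts $-\delta_{j3}\, e_\ell$, producing $X(e_j, e_{\ell+1}, e_{\ell+2}) + \delta_{j3}\, e_\ell$ as claimed. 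Specialising to $j = 3$, the same identity shows $X(e_3, e_{\ell+1}, e_{\ell+2}) = -e_\ell$, so this expression vanishes.

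For the second identity, I would first note that $\exp(te_j^\ell)$ is the rotation by angle $t$ in the $(e_\ell, e_j)$-plane fixing the other basis vectors, so in the notation of Lemma \ref{dercov2} (with $k=\ell$, $i=j$) one has $P_{t,s} = \gamma_j^\ell(s+t)$. The double derivative thus reduces to the second covariant derivative of $\sigma_3$ along the single geodesic $\gamma_j^\ell$, which by the one-parameter analogue of Lemma \ref{derivadask,n} equals
\begin{equation*}
\nabla_{e_j^\ell}\nabla_{E_j^\ell}\sigma_3 = \pi_0\bigl(\pi_0'(x'(0)) + x''(0)\bigr),
\end{equation*}
where $\pi_t$ is the projection onto $\gamma_j^\ell(t)^\perp$. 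From the explicit form of $x(t)$ above, $x''(0) = -e_3$. Parametrising $\gamma_j^\ell(t)^\perp$ by $-\sin t\, e_\ell + \cos t\, e_j$ together with the $e_k$ for $k \in \{3,\dots,7\}\setminus\{j\}$ and differentiating at $t=0$ gives $\pi_0'(v) = -\langle v, e_\ell\rangle e_j - \langle v, e_j\rangle e_\ell$. The second summand is annihilated by $\pi_0$, and substituting $v = x'(0)$ with $\langle x'(0), e_\ell\rangle = -\delta_{j3}$ produces $\pi_0\pi_0'(x'(0)) = \delta_{j3}\, e_3$. Adding $\pi_0(x''(0)) = -e_3$ yields $(\delta_{j3}-1)e_3$.

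The argument is essentially a bookkeeping exercise; the main obstacle is carefully tracking the $t$-dependence of the projection $\pi_t$ in the second-derivative step and the cyclic reordering inside $X_{3,8}$ in the first. Both collapse cleanly once one invokes (\ref{propiedadX}) together with the identity $X(e_0,e_1,e_2)=e_3$.
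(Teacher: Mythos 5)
Your proof is correct and follows essentially the same route as the paper: evaluate $\sigma_3$ along $\gamma_j^\ell$, expand $x(t)=\cos t\,e_3+\sin t\,X(e_j,e_{\ell+1},e_{\ell+2})$ using multilinearity and antisymmetry of $X_{3,8}$, and then apply the projection formulas of Lemma~\ref{derivadask,n}, with property~(\ref{propiedadX}) supplying $\langle X(e_j,e_{\ell+1},e_{\ell+2}),e_\ell\rangle=-\delta_{j3}$. (The paper computes $\pi_0\circ\pi_0'\circ S_1=\delta_{j3}e_j$ rather than your $\delta_{j3}e_3$, but since $\delta_{j3}$ forces $j=3$ these coincide.)
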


\begin{proof}
By the previous lemma with $i=j$, $k=\ell $, we write%
\begin{equation*}
P_{t,s}=\exp \left( te_{j}^{\ell }\right) \gamma _{j}^{\ell }\left( s\right)
=\gamma _{j}^{\ell }\left( t+s\right) \text{.}
\end{equation*}

We evaluate $\sigma _{3}$ at $P_{t,s}$ and, since $X\left( e_{\ell },e_{\ell
+1},e_{\ell +2}\right) =e_{3}$, we obtain%
\begin{align*}
\sigma _{3}\left( P_{t,s}\right) & =\sigma _{3}\left( \left( \cos \left(
t+s\right) \,e_{\ell }+\sin \left( t+s\right) \,e_{j}\right) \wedge e_{\ell
+1}\wedge e_{\ell +2}\right) \\
& =X\left( \cos \left( t+s\right) \,e_{\ell }+\sin \left( t+s\right)
\,e_{j},e_{\ell +1},e_{\ell +2}\right) \\
& =\cos \left( t+s\right) \,e_{3}+\sin \left( t+s\right) \,X\left(
e_{j},e_{\ell +1},e_{\ell +2}\right) \text{.}
\end{align*}%
We compute $S_{1}$, $S_{2}$ and $\pi _{t}$ as in Lemma \ref{derivadask,n},
obtaining%
\begin{align}
S_{1}& =\left. \tfrac{d}{ds}\right\vert _{0}\left( \cos s\,e_{3}+\sin
s\,X\left( e_{j},e_{\ell +1},e_{\ell +2}\right) \right) =X\left(
e_{j},e_{\ell +1},e_{\ell +2}\right) \text{,}  \label{DS1lema1} \\
S_{2}& =\left. \tfrac{\partial ^{2}}{\partial t\partial s}\right\vert
_{\left( 0,0\right) }\left( \cos \left( t+s\right) \,e_{3}+\sin \left(
t+s\right) \,X\left( e_{j},e_{\ell +1},e_{\ell +2}\right) \right) =-e_{3}%
\text{,}  \notag \\
\pi _{t}& = \operatorname{id}-\left( \cos t\,e_{\ell }+\sin t\,e_{j}\right) \otimes
\left( \cos t\,e^{\ell }+\sin t\,e^{j}\right)  \notag \\
& \ \ \ -\left( e_{\ell +1}\otimes e^{\ell +1}+e_{\ell +2}\otimes e^{\ell
+2}\right) \text{.}  \notag
\end{align}%
In particular, $\pi _{0}= \operatorname{id}-e_{0}\otimes e^{0}-e_{1}\otimes
e^{1}-e_{2}\otimes e^{2}$.

Then $\pi _{0}\circ S_{1}=X\left( e_{j},e_{\ell +1},e_{\ell +2}\right) -A$,
where 
\begin{align*}
A& =\left( e_{0}\otimes e^{0}+e_{1}\otimes e^{1}+e_{2}\otimes e^{2}\right)
X\left( e_{j},e_{\ell +1},e_{\ell +2}\right) \\
& =\left( e_{\ell }\otimes e^{\ell }\right) X\left( e_{j},e_{\ell
+1},e_{\ell +2}\right) =-\left( e_{\ell }\otimes e^{j}\right) X\left(
e_{\ell },e_{\ell +1},e_{\ell +2}\right) \\
& =-\left( e_{\ell }\otimes e^{j}\right) e_{3}=-\delta _{j3}e_{\ell }
\end{align*}%
(we used (\ref{propiedadX}) and (\ref{DobleTriple})). Therefore the first
assertion of the lemma is true by (\ref{DS1}) with $t=0$. We note that $%
\nabla _{e_{j}^{\ell }}\sigma _{3}=0$ if $j=3$, since in this case $X\left(
e_{3},e_{\ell +1},e_{\ell +2}\right) =-e_{\ell }$ for $\ell =0,1,2$.

Now we verify the second identity using (\ref{DS2}). We compute%
\begin{align*}
\pi _{0}\circ \pi _{0}^{\prime }& =\left(  \operatorname{id}-e_{0}\otimes
e^{0}-e_{1}\otimes e^{1}-e_{2}\otimes e^{2}\right) \left( -e_{j}\otimes
e^{\ell }-e_{\ell }\otimes e^{j}\right) \\
& =\left( -e_{j}\otimes e^{\ell }-e_{\ell }\otimes e^{j}\right) +\left(
e_{\ell }\otimes e^{j}\right) =-e_{j}\otimes e^{\ell }\text{.}
\end{align*}%
By property (\ref{propiedadX}),%
\begin{align*}
\pi _{0}\circ \pi _{0}^{\prime }\circ S_{1}& =-\left( e_{j}\otimes e^{\ell
}\right) X\left( e_{j},e_{\ell +1},e_{\ell +2}\right) =\left( e_{j}\otimes
e^{j}\right) X\left( e_{\ell },e_{\ell +1},e_{\ell +2}\right) \\
& =\left( e_{j}\otimes e^{j}\right) e_{3}=\delta _{j3}e_{j}\text{.}
\end{align*}%
Also,%
\begin{equation*}
\pi _{0}\circ S_{2}=\left(  \operatorname{id}-e_{0}\otimes e^{1}-e_{1}\otimes
e^{1}-e_{2}\otimes e^{2}\right) \left( -e_{3}\right) =-e_{3}\text{.}
\end{equation*}%
Adding the last two expressions yields that the second assertion of the
lemma is valid, since $\delta _{3j}e_{j}-e_{3}=0$ if $j=3$ and $\delta
_{3j}e_{j}-e_{3}=-e_{3}$ in the other cases.
\end{proof}

\begin{lemma}
\label{lema2} For $\ell =0,1,2$ and $i,j=3,\dots ,7$ with $i\neq j$ we have%
\begin{equation*}
\nabla _{e_{i}^{\ell }}\nabla _{E_{j}^{\ell }}\sigma _{3}=\delta _{j3}e_{i}%
\text{.}
\end{equation*}
\end{lemma}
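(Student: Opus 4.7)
The plan is to follow the same blueprint as Lemma \ref{lema1}: apply Lemma \ref{dercov2} with $k=\ell$ and compute the two-parameter family $P_{t,s}=\exp(te_i^\ell)\,\gamma_j^\ell(s)$ explicitly. Since $e_i^\ell=e_i\otimes e^\ell-e_\ell\otimes e^j$ generates a rotation in the $(e_\ell,e_i)$-plane, and the hypothesis $i\neq j$ together with $j\ge 3>\ell$ means $e_j$ lies outside this plane, we get $\exp(te_i^\ell)\,e_j=e_j$. Likewise $e_m$ is fixed for $m\in\{0,1,2\}\setminus\{\ell\}$ because $i\ge 3$. Consequently
$$P_{t,s}=e_0\wedge\dots\wedge\bigl(\cos s(\cos t\,e_\ell+\sin t\,e_i)+\sin s\,e_j\bigr)\wedge\dots\wedge e_2,$$
with the modified vector occupying the $\ell$-th slot.

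Next I would evaluate $\sigma_3$ by multilinearity of $X=X_{3,8}$ in the $\ell$-th argument. Writing $A_k$ for the value of $X(e_0,e_1,e_2)$ with its $\ell$-th entry replaced by $e_k$, this gives
$$\sigma_3(P_{t,s})=\cos s\cos t\,e_3+\cos s\sin t\,A_i+\sin s\,A_j,$$
using $A_\ell=X(e_0,e_1,e_2)=e_3$ by \eqref{DobleTriple}. From this formula I read off $S_1=A_j$ (as in Lemma \ref{lema1}) and, by a direct computation, $S_2=0$.

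Then I would compute the projection data. At $s=0$ the plane is $P_{t,0}=\mathrm{span}\{e_m:m\ne \ell\}\cup\{\cos t\,e_\ell+\sin t\,e_i\}$, so $\pi_t$ has the form used in Lemma \ref{lema1} but with $e_j$ replaced by $e_i$; thus $\pi_0=\mathrm{id}-e_0\otimes e^0-e_1\otimes e^1-e_2\otimes e^2$ and $\pi_0'=-e_i\otimes e^\ell-e_\ell\otimes e^i$. Since $i\ge 3$, the cancellation $\pi_0\circ\pi_0'=-e_i\otimes e^\ell$ goes through just as in the previous lemma. Applying this to $S_1=A_j$ yields
$$\pi_0\circ\pi_0'\circ S_1=-\langle A_j,e_\ell\rangle\,e_i.$$
The antisymmetry property \eqref{propiedadX} (equivalently, the fact that $\langle X(u,v,w),z\rangle$ is a totally skew $4$-form on orthonormal arguments) lets me swap $e_j$ in slot $\ell$ with $e_\ell$ in the outer slot, picking up one sign: $\langle A_j,e_\ell\rangle=-\langle X(e_0,e_1,e_2),e_j\rangle=-\delta_{j3}$. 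Combining this with $\pi_0\circ S_2=0$ and \eqref{DS2} finishes the proof with $\nabla_{e_i^\ell}\nabla_{E_j^\ell}\sigma_3=\delta_{j3}e_i$.

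The only real obstacle is careful index-and-sign bookkeeping, particularly in verifying $\langle A_j,e_\ell\rangle=-\delta_{j3}$ uniformly in $\ell\in\{0,1,2\}$; everything else is a parallel run of the arguments already used in Lemma \ref{lema1}, with the crucial distinction that the rotation $\exp(te_i^\ell)$ in the $(e_\ell,e_i)$-plane no longer touches the $\sin s\,e_j$ summand, which is what makes $S_2$ vanish in this off-diagonal case.
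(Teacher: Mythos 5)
Your proposal is correct and follows essentially the same route as the paper: same parametrized surface $P_{t,s}=\exp(te_i^\ell)\gamma_j^\ell(s)$, same identification of $S_1=X(e_j,e_{\ell+1},e_{\ell+2})$, $S_2=0$, $\pi_0\circ\pi_0'=-e_i\otimes e^\ell$, and the same use of the antisymmetry \eqref{propiedadX} to evaluate $\langle X(e_j,e_{\ell+1},e_{\ell+2}),e_\ell\rangle=-\delta_{j3}$. (One small typo to correct: $e_i^\ell=e_i\otimes e^\ell-e_\ell\otimes e^i$, not $-e_\ell\otimes e^j$; this does not affect the rest of the argument.)
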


\begin{proof}
Let $P_{t,s}$ be as in Lemma \ref{dercov2} with $k=\ell $ and $i\neq j$,
that is,%
\begin{equation*}
P_{t,s}=\exp \left( te_{i}^{\ell }\right) \gamma _{j}^{\ell }\left( s\right)
=\left( \cos t\,\cos s\,e_{\ell }+\sin t\,\cos s\,e_{i}+\sin s\,e_{j}\right)
\wedge e_{\ell +1}\wedge e_{\ell +2}\text{.}
\end{equation*}

We evaluate $\sigma _{3}$ at $P_{t,s}$ and, since $X\left( e_{\ell },e_{\ell
+1},e_{\ell +2}\right) =e_{3}$, we obtain%
\begin{align*}
\sigma _{3}\left( P_{t,s}\right) & =X\left( \cos t\,\cos s\,e_{\ell }+\sin
t\,\cos s\text{ }e_{i}+\sin s\,e_{j},e_{\ell +1},e_{\ell +2}\right) \\
& =\cos s\,\left( \cos t\,e_{3}+\sin t\,X\left( e_{i},e_{\ell +1},e_{\ell
+2}\right) \right) +\sin s\,X\left( e_{j},e_{\ell +1},e_{\ell +2}\right) 
\text{.}
\end{align*}%
In order to use Lemma \ref{derivadask,n}, we compute%
\begin{align*}
S_{1}& =\left. \tfrac{d}{ds}\right\vert _{0}\left( \cos s\,e_{3}+\sin
s\,X\left( e_{j},e_{\ell +1},e_{\ell +2}\right) \right) =X\left(
e_{j},e_{\ell +1},e_{\ell +2}\right) \text{,} \\
S_{2}& =\left. \tfrac{\partial ^{2}}{\partial t\partial s}\right\vert
_{\left( 0,0\right) }\sigma _{3}\left( P_{t,s}\right) =0\text{,} \\
\pi _{t}& = \operatorname{id}-\left( \cos t\,e_{\ell }+\sin t\,e_{i}\right) \otimes
\left( \cos t\,e^{\ell }+\sin t\,e^{i}\right) \\
& \ \ \ -\left( e_{\ell +1}\otimes e^{\ell +1}+e_{\ell +2}\otimes e^{\ell
+2}\right) \text{.}
\end{align*}%
Then, $\pi _{0}^{\prime }=-e_{i}\otimes e^{\ell }-e_{\ell }\otimes e^{i}$
and we have%
\begin{align*}
\pi _{0}\circ \pi _{0}^{\prime }& =\left(  \operatorname{id}-e_{0}\otimes
e^{1}-e_{1}\otimes e^{1}-e_{2}\otimes e^{2}\right) \left( -e_{i}\otimes
e^{\ell }-e_{\ell }\otimes e^{i}\right) \\
& =\left( -e_{i}\otimes e^{\ell }-e_{\ell }\otimes e^{i}\right) +\left(
e_{\ell }\otimes e^{i}\right) =-e_{i}\otimes e^{\ell }\text{.}
\end{align*}%
Thus, the identity of the statement is valid since%
\begin{align*}
\pi _{0}\circ \pi _{0}^{\prime }\circ S_{1}& =-\left( e_{i}\otimes e^{\ell
}\right) X\left( e_{j},e_{\ell +1},e_{\ell +2}\right) =\left( e_{i}\otimes
e^{j}\right) X\left( e_{\ell },e_{\ell +1},e_{\ell +2}\right) \\
& =\left( e_{i}\otimes e^{j}\right) e_{3}=\delta _{j3}e_{i}\text{.}\qedhere
\end{align*}
\end{proof}

\begin{lemma}
\label{lema3} For $k,\ell =0,1,2$, with $k\neq \ell $, and $i,j=3,\dots ,7$
we have%
\begin{equation*}
\nabla _{e_{i}^{k}}\nabla _{E_{j}^{\ell }}\sigma _{3}=r_{k,\ell }\left( 
 \operatorname{id}-e_{k}\otimes e^{k}-e_{\ell }\otimes e^{\ell }\right) X\left(
e_{i},e_{j},e_{m}\right) \text{,}
\end{equation*}%
where $m\in \left\{ 0,1,2\right\} $ and $r_{k,\ell }=\pm 1$ satisfy $%
e_{k}\wedge e_{\ell }\wedge e_{m}=r_{k,\ell }e_{0}\wedge e_{1}\wedge e_{2}$.
\end{lemma}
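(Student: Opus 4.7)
The plan is to follow the template of Lemmas \ref{lema1} and \ref{lema2}. Using Lemma \ref{dercov2}, I set $P_{t,s}=\exp(te_i^k)\gamma_j^\ell(s)$ and apply Lemma \ref{derivadask,n} together with formula (\ref{DS2}). Let $m$ denote the unique element of $\{0,1,2\}\setminus\{k,\ell\}$; the positively oriented ordered basis $(a_0,a_1,a_2)$ of $P_{t,s}$ places $\cos t\,e_k+\sin t\,e_i$ in slot $k$, the vector $e_m$ in slot $m$, and $\exp(te_i^k)(\cos s\,e_\ell+\sin s\,e_j)$ in slot $\ell$. For $i\neq j$ this last entry reduces to $\cos s\,e_\ell+\sin s\,e_j$; for $i=j$ it acquires an extra term $-\sin s\sin t\,e_k$, which is of second order at $(t,s)=(0,0)$ and hence does not affect the quantities $S_1$ or $S_2$.

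Next I expand $\sigma_3(P_{t,s})=X(a_0,a_1,a_2)$ by multilinearity and invoke the alternating property of $X$: reordering the slot contents to the list $(e_k,e_\ell,e_m)$ produces a global factor $r_{k,\ell}$, and terms with a repeated basis vector vanish. The computation yields
\begin{equation*}
S_1 = r_{k,\ell}\,X(e_k,e_j,e_m)\quad\text{and}\quad S_2 = r_{k,\ell}\,X(e_i,e_j,e_m),
\end{equation*}
uniformly in the two sub-cases (for $i=j$ both sides of the $S_2$ identity vanish by alternating).

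Since $\pi_t$ depends only on $P_{t,0}$, the same computation as in Lemma \ref{lema2} gives $\pi_0=\operatorname{id}-e_0\otimes e^0-e_1\otimes e^1-e_2\otimes e^2$, $\pi_0'=-e_i\otimes e^k-e_k\otimes e^i$, and, using $e_i\perp e_0,e_1,e_2$, $\pi_0\circ\pi_0'=-e_i\otimes e^k$. The cross-product identity $\langle e_k,X(e_k,e_j,e_m)\rangle=0$ forces $\pi_0\circ\pi_0'\circ S_1=0$, while $X(e_i,e_j,e_m)\perp e_m$ annihilates the $e_m\otimes e^m$ summand in $\pi_0\circ S_2$; substituting into (\ref{DS2}) yields the claimed expression. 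The main obstacle will be the bookkeeping that tracks the sign $r_{k,\ell}$ produced by the reordering of $X$'s arguments, together with the brief verification that the exceptional case $i=j$---in which $\exp(te_i^k)$ no longer fixes $e_i$---does not alter $S_1$ or $S_2$ at the origin.
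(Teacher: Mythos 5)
Your proposal is correct and follows the paper's own computational route exactly: apply Lemma~\ref{dercov2} to set up $P_{t,s}=\exp(te_i^k)\gamma_j^\ell(s)$, expand $\sigma_3(P_{t,s})$ by multilinearity after reordering to $(e_k,e_\ell,e_m)$ at the cost of the sign $r_{k,\ell}$, read off $S_1$, $S_2$, $\pi_0$, $\pi_0'$, note $\pi_0\circ\pi_0'=-e_i\otimes e^k$, kill $\pi_0\pi_0' S_1$ via $\langle e_k,X(e_k,e_j,e_m)\rangle=0$ and the $e_m\otimes e^m$ term in $\pi_0 S_2$ via $\langle e_m,X(e_i,e_j,e_m)\rangle=0$, and substitute into (\ref{DS2}). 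Your extra attention to the subcase $i=j$ (which the paper treats silently, even though there $\exp(te_i^k)$ no longer fixes $e_j$ and the displayed triple of vectors is not orthonormal) is a welcome refinement; just note that the phrase ``second order, hence does not affect $S_2$'' is not quite a valid reason by itself, since $S_2$ is a mixed second derivative --- what actually makes the extra term harmless is that, once contracted against $\cos t\,e_k+\sin t\,e_i$ inside the alternating $X$, it either meets $X(e_k,e_k,\cdot)=0$ or enters at order $st^2$, consistent with your independent check that both sides of the $S_2$ identity vanish for $i=j$.
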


\begin{proof}
For $k\neq \ell $, by Lemma \ref{dercov2}, we consider the parametrized
surface given by%
\begin{equation*}
P_{t,s}=\exp \left( te_{i}^{k}\right) \gamma _{j}^{\ell }\left( s\right)
=r_{k,\ell }\left( \cos t\,e_{k}+\sin t\,e_{i}\right) \wedge \left( \cos
s\,e_{\ell }+\sin s\,e_{j}\right) \wedge e_{m}\text{,}
\end{equation*}%
with index $k$, $\ell $ $ \operatorname{mod}3$ as in the previous lemmas. We evaluate 
$\sigma _{3}$ at $P_{t,s}$:%
\begin{equation*}
\sigma _{3}\left( P_{t,s}\right) =r_{k,\ell }X\left( \cos t\,e_{k}+\sin
t\,e_{i},\cos s\,e_{\ell }+\sin s\,e_{j},e_{m}\right) \text{.}
\end{equation*}%
In order to apply Lemma \ref{derivadask,n}, we compute%
\begin{align*}
S_{1}& =\left. \tfrac{d}{ds}\right\vert _{0}\left( r_{k,\ell }\left( \cos
s\,X\left( e_{k},e_{\ell },e_{m}\right) +\sin s\,X\left(
e_{k},e_{j},e_{m}\right) \right) \right)  \\
& =r_{k,\ell }X\left( e_{k},e_{j},e_{m}\right) \text{,} \\
S_{2}& =\left. \tfrac{\partial ^{2}}{\partial t\partial s}\right\vert
_{\left( 0,0\right) }\sigma _{3}\left( P_{t,s}\right) =r_{k,\ell }X\left(
e_{i},e_{j},e_{m}\right) \text{,} \\
\pi _{t}& = \operatorname{id}-\left( \cos t\,e_{k}+\sin t\,e_{i}\right) \otimes
\left( \cos t\,e^{k}+\sin t\,e^{i}\right) -e_{\ell }\otimes e^{\ell
}-e_{m}\otimes e^{m}\text{.}
\end{align*}

Hence, $\pi _{0}\circ \pi _{0}^{\prime }=\pi _{0}\circ \left( -e_{i}\otimes
e^{k}-e_{k}\otimes e^{i}\right) =-e_{i}\otimes e^{k}$ and%
\begin{equation*}
\pi _{0}\circ \pi _{0}^{\prime }\circ S_{1}=-r_{k,\ell }\left( e_{i}\otimes
e^{k}\right) X\left( e_{k},e_{j},e_{m}\right) =0\text{.}
\end{equation*}%
Now,%
\begin{equation*}
\pi _{0}\circ S_{2}=r_{k,\ell }\left(  \operatorname{id}-e_{k}\otimes e^{k}-e_{\ell
}\otimes e^{\ell }\right) X\left( e_{i},e_{j},e_{m}\right) \text{,}
\end{equation*}%
since $e^{m}X\left( e_{i},e_{j},e_{m}\right) =0$. Then the identity follows
from Lemma \ref{derivadask,n}.
\end{proof}

\bigskip

The canonical action of $SO\left( 8\right) $ over $G\left( 3,8\right) $
induces an action on the total space of the sphere bundle $%
E_{3,8}^{1}\rightarrow G\left( 3,8\right) $ as in (\ref{bundle1}), given by $%
g\cdot \left( u\wedge v\wedge w,x\right) =\left( gu\wedge gv\wedge
gw,gx\right) $, for all $u\wedge v\wedge w\in G\left( 3,8\right) $ and $%
x\perp u\wedge v\wedge w$.

Let $ \operatorname{Spin}\left( 7\right) $ be the automorphism group of the triple
cross product $X_{3,8}$, which is known to be contained in $SO\left(
8\right) $. The following result (Theorem 8.2 in \cite{salamon}) will be
useful later.

\begin{theorem}
\cite{salamon}\label{Spin7} The group $ \operatorname{Spin}\left( 7\right) $ acts
transitively on the set%
\begin{equation*}
\left\{ \left( u,v,w,x\right) \in \mathbb{R}^{8}\mid u,v,w,X_{3,8}\left(
u,v,w\right) ,x\text{ are orthonormal}\right\} \text{.}
\end{equation*}
\end{theorem}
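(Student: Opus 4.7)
The plan is to exhibit the transitive action of $\operatorname{Spin}(7)$ on the set in question stage by stage, placing the entries of the 5-tuple one at a time using the standard subgroup chain
$$\operatorname{Spin}(7) \supset G_{2} \supset SU(3) \supset SU(2) \supset \{e\}.$$
A dimension count motivates the strategy: since $X_{3,8}(u,v,w)$ is determined by $u,v,w$ and automatically lies in their orthogonal complement, the admissible 5-tuples $(u,v,w,X_{3,8}(u,v,w),x)$ form a manifold of dimension $7+6+5+3=21$, which equals $\dim \operatorname{Spin}(7)$. Consequently, once transitivity is established, the action is necessarily free (as $\operatorname{Spin}(7)$ is connected), and it suffices to produce a group element that moves an arbitrary admissible tuple to the reference tuple $(e_{0},e_{1},e_{2},e_{3},e_{4})$.

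First, $\operatorname{Spin}(7)\subset SO(8)$ acts transitively on $S^{7}\subset\mathbb{O}$ with isotropy $G_{2}$ at $e_{0}=1$, so I can send $u$ to $e_{0}$. Second, $G_{2}$ is the automorphism group of $\mathbb{O}$ and, acting on $\operatorname{Im}\mathbb{O}=\mathbb{R}^{7}$, is transitive on $S^{6}$ with isotropy $SU(3)$ at $e_{1}$; this moves $v$ to $e_{1}$. Third, the isotropy $SU(3)$ preserves the 6-dimensional subspace $(e_{0}\wedge e_{1})^{\perp}$, on which left multiplication by $e_{1}$ defines a complex structure identifying it with $\mathbb{C}^{3}$, and $SU(3)$ acts transitively on the unit sphere $S^{5}$ there; this moves $w$ to $e_{2}$. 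At this stage, because every $g\in\operatorname{Spin}(7)$ is by definition an automorphism of $X_{3,8}$, the fourth entry is automatically sent to $X_{3,8}(e_{0},e_{1},e_{2})=e_{3}$ by equation~(\ref{DobleTriple}), so no additional work is required for that component.

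Fourth and last, the remaining entry $x$ is a unit vector in the four-dimensional orthogonal complement of $\{e_{0},e_{1},e_{2},e_{3}\}=\operatorname{span}\{e_{4},\ldots,e_{7}\}$. The isotropy subgroup in $\operatorname{Spin}(7)$ of the four already-placed vectors is $SU(2)$, and the standard identification of $\operatorname{span}\{e_{4},\ldots,e_{7}\}$ with the quaternions $\mathbb{H}$ (realized inside $\mathbb{O}$ via right multiplication by $e_{4}$ on the quaternionic subalgebra $\operatorname{span}\{e_{0},e_{1},e_{2},e_{3}\}$) identifies this action with $\operatorname{Sp}(1)=SU(2)$ acting on $\mathbb{H}$ by left multiplication, which is transitive on $S^{3}$.

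The main obstacle is carrying out the identifications at each stage rigorously: verifying that the isotropy subgroups along the chain really are $G_{2}$, $SU(3)$, and $SU(2)$, and that the complementary subspaces carry the advertised algebraic structure. These identifications rest on the nonassociative algebra of $\mathbb{O}$, specifically that $e_{1}$-multiplication descends to a bona fide complex structure on $(e_{0}\wedge e_{1})^{\perp}$ and that $\operatorname{span}\{e_{0},e_{1},e_{2},e_{3}\}$ is an associative quaternionic subalgebra of $\mathbb{O}$. Once these ingredients are in place, the transitivity at each step reduces to the classical transitive actions of $G_{2}$, $SU(3)$, $SU(2)$ on $S^{6}$, $S^{5}$, $S^{3}$ respectively, and the dimension count guarantees that the stabilizer of the full tuple is trivial.
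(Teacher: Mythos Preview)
The paper does not supply its own proof of this statement: it is quoted verbatim as Theorem~8.2 of \cite{salamon} and used as a black box. Your argument is correct and is in fact the standard proof found in that reference, namely the inductive placement of the entries via the chain $\operatorname{Spin}(7)\supset G_{2}\supset SU(3)\supset SU(2)$ together with the dimension count $7+6+5+3=21=\dim\operatorname{Spin}(7)$. One small wording issue: the set consists of $4$-tuples $(u,v,w,x)$, not $5$-tuples; the vector $X_{3,8}(u,v,w)$ is not an independent entry but a constraint on $x$, which you handle correctly in the body of the argument.
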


\begin{proposition}
\label{sarmonica} The section $\sigma _{3}$ of $E_{3,8}^{1}\rightarrow
G\left( 3,8\right) $ is vertically harmonic.
\end{proposition}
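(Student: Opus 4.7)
The plan is to apply Proposition \ref{PacoCarmelo}(a): it suffices to produce a smooth function $f$ on $G(3,8)$ satisfying $\Delta\sigma_3 = f\,\sigma_3$. First I would compute $\Delta\sigma_3$ at the distinguished point $e_0\wedge e_1\wedge e_2$, using the orthonormal basis $\{e_j^\ell\}_{\ell=0,1,2,\, j=3,\dots,7}$ of $T_{e_0\wedge e_1\wedge e_2}G(3,8)$ extended to the local frame $\{E_j^\ell\}$ that is parallel along each radial geodesic $\gamma_j^\ell$ from the base point.

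A key observation is that $\nabla^{G(3,8)}_{e_j^\ell}E_j^\ell = 0$ at $e_0\wedge e_1\wedge e_2$ (because $E_j^\ell$ is parallel along $\gamma_j^\ell$, whose initial velocity is $e_j^\ell$), so the rough Laplacian reduces to the sum of diagonal second covariant derivatives:
\[
\Delta\sigma_3\bigl(e_0\wedge e_1\wedge e_2\bigr) \;=\; \sum_{\ell=0}^{2}\sum_{j=3}^{7}\nabla_{e_j^\ell}\nabla_{E_j^\ell}\sigma_3.
\]
Lemma \ref{lema1} already evaluates each of these terms as $(\delta_{j3}-1)\,e_3$; the sum therefore equals $-12\,e_3 = -12\,\sigma_3(e_0\wedge e_1\wedge e_2)$. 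Notice that Lemmas \ref{lema2} and \ref{lema3}, which cover the off-diagonal cases, contribute nothing to the Laplacian at this point; they are reserved for the curvature condition $\mathcal{R}_{\sigma_3}\equiv 0$ of Proposition \ref{PacoCarmelo}(b) appearing later.

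To pass from this pointwise identity to a global one, I would invoke $\operatorname{Spin}(7)$-equivariance. Since $\operatorname{Spin}(7)\subset SO(8)$ consists of automorphisms of $X_{3,8}$, it preserves the Riemannian metric on $G(3,8)$, the bundle $E_{3,8}^{1}$ with its metric connection, and the section $\sigma_3$; hence $\Delta\sigma_3$ is $\operatorname{Spin}(7)$-equivariant as well. Theorem \ref{Spin7} yields transitivity of $\operatorname{Spin}(7)$ on $G(3,8)$: given any $u\wedge v\wedge w$, choose a unit vector $x$ orthogonal to the four-dimensional subspace spanned by $\{u,v,w,X_{3,8}(u,v,w)\}$; then some $g\in\operatorname{Spin}(7)$ sends the 4-tuple $(u,v,w,x)$ to $(e_0,e_1,e_2,e_4)$, and in particular moves $u\wedge v\wedge w$ to $e_0\wedge e_1\wedge e_2$. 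Equivariance then propagates the identity $\Delta\sigma_3 = -12\,\sigma_3$ to all of $G(3,8)$, so Proposition \ref{PacoCarmelo}(a) applies with $f\equiv -12$.

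Given the lemmas already in place, the proof is nearly mechanical: the only genuine conceptual step is recognizing that the Laplacian trace collapses to its diagonal at the base point (thanks to the radially parallel frame) and that the outcome at a single point determines the section globally via the $\operatorname{Spin}(7)$-action; no further calculation beyond combining Lemma \ref{lema1} and Theorem \ref{Spin7} is needed.
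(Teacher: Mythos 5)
Your proposal is correct and follows essentially the same route as the paper's proof: reduce to Proposition \ref{PacoCarmelo}(a), compute $\Delta\sigma_3$ at $e_0\wedge e_1\wedge e_2$ via (\ref{Delta3}) using the radially parallel frame, apply Lemma \ref{lema1} to get $-12\,e_3$, and propagate globally by $\operatorname{Spin}(7)$-invariance and the transitivity furnished by Theorem \ref{Spin7}. Your added detail on extracting transitivity on $G(3,8)$ from Theorem \ref{Spin7} (by choosing a unit $x$ orthogonal to $u,v,w,X_{3,8}(u,v,w)$) is a helpful elaboration of what the paper calls an immediate consequence, but it is not a different argument.
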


\begin{proof}
We use the criterion given in Proposition \ref{PacoCarmelo}~(a). We look for
a function $f:G\left( 3,8\right) \rightarrow \mathbb{R}$ such that $\Delta
\sigma _{3}=f\sigma _{3}$. We have that%
\begin{equation}
\left( \Delta \sigma _{3}\right) \left( e_{0}\wedge e_{1}\wedge e_{2}\right)
=\sum\nolimits_{\ell =0}^{2}\sum\nolimits_{j=3}^{7}\nabla _{e_{j}^{\ell
}}\nabla _{E_{j}^{\ell }}\sigma _{3}\text{.}  \label{Delta3}
\end{equation}

By Lemma \ref{lema1}, the term $\nabla _{e_{j}^{\ell }}\nabla _{E_{j}^{\ell
}}\sigma _{3}$ is non-zero only when $j\neq 3$, and in this case it is equal
to $-e_{3}$. So,%
\begin{equation*}
\left( \Delta \sigma _{3}\right) \left( e_{0}\wedge e_{1}\wedge e_{2}\right)
=\sum\nolimits_{\ell =0}^{2}\sum\nolimits_{j=4}^{7}\left( -e_{3}\right)
=-12e_{3}=-12\sigma _{3}\left( e_{0}\wedge e_{1}\wedge e_{2}\right) \text{.}
\end{equation*}

As an immediate consequence of Theorem \ref{Spin7}, the isometric action of
the group $ \operatorname{Spin}\left( 7\right) $ on $G\left( 3,8\right) $ is
transitive. Also, the section $\sigma _{3}$ and the connection on $E_{3,8}$
are invariant by the action. Therefore, $\Delta \sigma _{3}=-12\sigma _{3}$
on $G\left( 3,8\right) $.
\end{proof}

\begin{lemma}
\label{curvatura} For $k,\ell =0,1,2$ and $i,j=3,\dots ,7$, the curvature $%
R_{e_{i}^{k}e_{j}^{\ell }}\sigma _{3}$ equals $\delta _{i3}e_{j}-\delta
_{j3}e_{i}$ if $k=\ell $ and $0$ if $k\neq \ell $.
\end{lemma}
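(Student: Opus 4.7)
The plan is to invoke the curvature formula at the base point $e_0 \wedge e_1 \wedge e_2$, where by (\ref{corIgualCero}) the bracket $[E_i^k, E_j^\ell]$ vanishes. Consequently, the given curvature convention $R_{X,Y}\sigma = \nabla_{[X,Y]}\sigma + [\nabla_Y,\nabla_X]\sigma$ reduces at this point to
\begin{equation*}
R_{e_i^k,\, e_j^\ell}\sigma_3 = \nabla_{e_j^\ell}\nabla_{E_i^k}\sigma_3 - \nabla_{e_i^k}\nabla_{E_j^\ell}\sigma_3,
\end{equation*}
so everything boils down to reading off the already-computed second covariant derivatives from Lemmas \ref{lema1}, \ref{lema2}, and \ref{lema3}.

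I would split into cases. \textbf{Case 1:} $k=\ell$. If $i=j$, Lemma \ref{lema1} gives that both terms equal $(\delta_{j3}-1)e_3$, so their difference vanishes, consistent with $\delta_{i3}e_j - \delta_{j3}e_i = 0$ when $i=j$. If $i\neq j$, Lemma \ref{lema2} gives $\nabla_{e_i^\ell}\nabla_{E_j^\ell}\sigma_3 = \delta_{j3}e_i$ and (swapping the roles of $i$ and $j$) $\nabla_{e_j^\ell}\nabla_{E_i^\ell}\sigma_3 = \delta_{i3}e_j$, so the difference is exactly $\delta_{i3}e_j - \delta_{j3}e_i$, as asserted.

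\textbf{Case 2:} $k\neq \ell$. Here both terms are handled by Lemma \ref{lema3}, and the point is to show they coincide. Applying the lemma to the second term (with the index pairs $(k,i)$ and $(\ell,j)$ swapped) yields
\begin{equation*}
\nabla_{e_j^\ell}\nabla_{E_i^k}\sigma_3 = r_{\ell,k}\bigl(\operatorname{id}-e_\ell\otimes e^\ell - e_k\otimes e^k\bigr) X(e_j,e_i,e_m),
\end{equation*}
with the same $m\in\{0,1,2\}$, since $\{k,\ell,m\}=\{0,1,2\}$ depends only on the unordered pair $\{k,\ell\}$. Two sign flips combine: $r_{\ell,k} = -r_{k,\ell}$ from the orientation of the wedge, and $X(e_j,e_i,e_m) = -X(e_i,e_j,e_m)$ from the skew-symmetry of the triple cross product. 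Therefore the expression matches $\nabla_{e_i^k}\nabla_{E_j^\ell}\sigma_3$ exactly, and the difference is zero.

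I do not expect any genuine obstacle: all the substantial computation has already been carried out in the preceding lemmas, and what remains is a careful bookkeeping of signs and symmetries. The only subtlety worth flagging explicitly is the cancellation in Case 2, where one must note simultaneously that $r_{\cdot,\cdot}$ is antisymmetric in its two arguments and that $X_{3,8}$ is antisymmetric in its first two arguments, so the two minus signs multiply to give equality rather than opposite signs.
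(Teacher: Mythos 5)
Your proof is correct and follows essentially the same route as the paper: use \eqref{corIgualCero} to simplify the curvature formula at the base point, then read off the second covariant derivatives from Lemmas \ref{lema1}, \ref{lema2}, and \ref{lema3}, and finally cancel the two sign flips ($r_{\ell,k}=-r_{k,\ell}$ and the skew-symmetry of $X_{3,8}$) when $k\neq\ell$. Your treatment of the $i=j$ sub-case in Case 1 is slightly more explicit than the paper's (which just remarks that the stated formula trivially holds for $i=j$ since both sides vanish), but this is only a cosmetic difference.
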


\begin{proof}
By Lemma \ref{lema2} we know that $\nabla _{e_{i}^{\ell }}\nabla
_{E_{j}^{\ell }}\sigma _{3}=\delta _{j3}e_{i}$ if $i\neq j$. Then, by (\ref%
{corIgualCero}),%
\begin{equation*}
R_{e_{i}^{\ell }e_{j}^{\ell }}\sigma _{3}=-\nabla _{e_{i}^{\ell }}\nabla
_{E_{j}^{\ell }}\sigma _{3}+\nabla _{e_{j}^{\ell }}\nabla _{E_{i}^{\ell
}}\sigma _{3}=-\delta _{j3}e_{i}+\delta _{i3}e_{j}\text{,}
\end{equation*}%
as stated (even if $i=j$). When $k\neq \ell $, we get by Lemma \ref{lema3}
that 
\begin{align*}
R_{e_{i}^{k}e_{j}^{\ell }}\sigma _{3}& =-\nabla _{e_{i}^{k}}\nabla
_{E_{j}^{\ell }}\sigma _{3}+\nabla _{e_{j}^{\ell }}\nabla _{E_{i}^{k}}\sigma
_{3} \\
& =\left( r_{k,\ell }+r_{\ell ,k}\right) \left(  \operatorname{id}-e_{k}\otimes
e^{k}-e_{\ell }\otimes e^{\ell }\right) X\left( e_{j},e_{i},e_{m}\right) =0%
\text{,}
\end{align*}%
since $r_{\ell ,k}X\left( e_{j},e_{i},e_{m}\right) =-r_{k,\ell }X\left(
e_{i},e_{j},e_{m}\right) $.
\end{proof}

\begin{theorem}
\label{TeoSigma3} The section $\sigma _{3}:G\left( 3,8\right) \rightarrow
E_{3,8}^{1}$ is a harmonic map.
\end{theorem}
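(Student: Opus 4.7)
The plan is to apply Proposition \ref{PacoCarmelo}(b). Since Proposition \ref{sarmonica} already establishes that $\sigma_3$ is vertically harmonic (with $\Delta\sigma_3 = -12\sigma_3$), it remains only to verify that the $1$-form $\mathcal{R}_{\sigma_3}$ defined in (\ref{Rsigma}) vanishes identically. The group $\operatorname{Spin}(7)$ acts transitively on $G(3,8)$ by isometries, and both the section $\sigma_3$ and the connection $\nabla$ on $E_{3,8}$ are $\operatorname{Spin}(7)$-invariant, because $\operatorname{Spin}(7)$ preserves the triple cross product $X_{3,8}$ by definition. Consequently $\mathcal{R}_{\sigma_3}$ is $\operatorname{Spin}(7)$-equivariant, so it suffices to prove $\mathcal{R}_{\sigma_3}(e_i^k)=0$ at the basepoint $P_0 = e_0\wedge e_1\wedge e_2$ for each element $e_i^k$ of the orthonormal basis of $T_{P_0}G(3,8)$.

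At $P_0$ I would expand
\begin{equation*}
\mathcal{R}_{\sigma_3}(e_i^k) \;=\; \sum_{\ell=0}^{2}\sum_{j=3}^{7}\bigl\langle R_{e_i^k,\,e_j^\ell}\sigma_3,\; \nabla_{e_j^\ell}\sigma_3\bigr\rangle
\end{equation*}
and then invoke the previous lemmas to collapse the sum. By Lemma \ref{curvatura} the curvature term vanishes whenever $k\neq\ell$, so the double sum reduces to the diagonal $\ell=k$. Lemma \ref{lema1} gives $\nabla_{e_3^k}\sigma_3 = 0$, so the index $j=3$ drops out; for the surviving indices $j\in\{4,5,6,7\}$ one has $\nabla_{e_j^k}\sigma_3 = X_{3,8}(e_j,e_{k+1},e_{k+2})$ and, again by Lemma \ref{curvatura}, $R_{e_i^k,\,e_j^k}\sigma_3 = \delta_{i3}\,e_j$. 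Each remaining inner product then equals $\delta_{i3}\bigl\langle e_j,\,X_{3,8}(e_j,e_{k+1},e_{k+2})\bigr\rangle$, which vanishes because $X_{3,8}(e_j,e_{k+1},e_{k+2})$ is orthogonal to each of its arguments by the defining property of a cross product. Hence $\mathcal{R}_{\sigma_3}(e_i^k)=0$ for every basis vector, so $\mathcal{R}_{\sigma_3}$ vanishes at $P_0$, and by equivariance vanishes everywhere on $G(3,8)$. Proposition \ref{PacoCarmelo}(b) then yields harmonicity.

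The work is essentially bookkeeping, since the intricate covariant-derivative and curvature calculations have already been absorbed into Lemmas \ref{lema1}--\ref{curvatura}. The only point deserving some care is the equivariance reduction to the basepoint; once this is in place, the final cancellation is structural rather than computational: $R_{e_i^k,e_j^k}\sigma_3$ is a multiple of the pure direction $e_j$ while $\nabla_{e_j^k}\sigma_3$ lies in the orthogonal complement of $e_j$, so the inner product cannot fail to vanish.
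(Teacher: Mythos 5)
Your proposal is correct and follows essentially the same argument as the paper: reduce to the basepoint by $\operatorname{Spin}(7)$-equivariance, use Lemma \ref{curvatura} to kill the off-diagonal indices $k\neq\ell$, use Lemma \ref{lema1} to drop $j=3$, and observe that the remaining inner products $\delta_{i3}\langle e_j, X_{3,8}(e_j,e_{k+1},e_{k+2})\rangle$ vanish because a cross product is orthogonal to its arguments. The paper organizes the bookkeeping slightly differently (keeping the $j=3$ term in the sum and then noting it is zero rather than removing it up front), but the content is identical, and your closing remark that the curvature term is a multiple of $e_j$ while $\nabla_{e_j^k}\sigma_3\perp e_j$ is an apt summary of why the cancellation is structural.
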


\begin{proof}
Proposition \ref{sarmonica} asserts that $\sigma _{3}$ is vertically
harmonic; so, by Proposition \ref{PacoCarmelo}~(b), it remains to prove that
the $1$-form $\mathcal{R}_{\sigma _{3}}$ defined in (\ref{Rsigma}) is
identically zero. By invariance of the action of $ \operatorname{Spin}\left( 7\right) 
$, it is sufficient to verify this at $e_{0}\wedge e_{1}\wedge e_{2}$.

By the previous lemma and Lemma \ref{lema1}, for all $\ell ,i$ we have that%
\begin{align}
\mathcal{R}_{\sigma _{3}}\left( e_{i}^{\ell }\right) &
=\sum\nolimits_{k=0}^{2}\sum\nolimits_{j=3}^{7}\left\langle R_{e_{i}^{\ell
},e_{j}^{k}}\sigma _{3},\nabla _{e_{j}^{k}}\sigma _{3}\right\rangle 
\label{Rsigma3=0} \\
& =\sum\nolimits_{j=3}^{7}\left\langle \delta _{i3}e_{j}-\delta
_{j3}e_{i},X\left( e_{j},e_{\ell +1},e_{\ell +2}\right) +\delta _{j3}e_{\ell
}\right\rangle   \notag \\
& =\sum\nolimits_{j=4}^{7}\left\langle \delta _{i3}e_{j},X\left(
e_{j},e_{\ell +1},e_{\ell +2}\right) +\delta _{j3}e_{\ell }\right\rangle =0%
\text{.}  \notag
\end{align}%
(note that $X\left( e_{j},e_{\ell +1},e_{\ell +2}\right) +\delta
_{j3}e_{\ell }=0$ for $j=3$ since $X\left( e_{3},e_{\ell +1},e_{\ell
+2}\right) =-e_{\ell }$).
\end{proof}

\subsection{Harmonicity of $\protect\sigma _{2}$}

We recall from \cite{harvey} one of the equivalent definitions of the
exceptional Lie group $G_{2}$: 
\begin{equation*}
G_{2}=\left\{ g\in GL\left( 7,\mathbb{R}\right) \mid g\left( x\times
y\right) =g\left( x\right) \times g\left( y\right) \right\} \text{,}
\end{equation*}%
where $\times $ is the double cross product and $\mathbb{R}^{7}\equiv  \operatorname{%
Im}\mathbb{O}$. By Problem 9~(b) on page 121 of \cite{harvey}, the group
acts transitively on $G\left( 2,7\right) $, and by isometries, since $%
G_{2}\subset SO\left( 7\right) $. Therefore, its elements determine
morphisms of the Riemannian vector bundle $E_{2,7}\rightarrow G\left(
2,7\right) $. Clearly, the section $\sigma _{2}$ is preserved by this
action. The identity (\ref{DobleTriple}) provides a natural inclusion of $%
G_{2}$ in $ \operatorname{Spin}\left( 7\right) $ as a subgroup.

\bigskip

The proof that the section $\sigma _{2}$ is a harmonic map follows from the
harmonicity of the section $\sigma _{3}$. We consider the immersion%
\begin{equation}
\phi :G\left( 2,7\right) \rightarrow G\left( 3,8\right) \text{,}\ \ \ \ \phi
\left( u\wedge v\right) =e_{0}\wedge u\wedge v\text{,}  \label{phi1}
\end{equation}%
for all orthonormal $u,v$ in $ \operatorname{Im}\mathbb{O}$ and the morphism of
Riemannian vector bundles 
\begin{equation}
\Phi :E_{2,7}\rightarrow E_{3,8}\text{,}\ \ \ \ \Phi \left( u\wedge
v,x\right) =\left( e_{0}\wedge u\wedge v,x\right) \text{,}  \label{Phi}
\end{equation}%
where we identify $\mathbb{R}^{7}=e_{0}^{\bot }\subset \mathbb{R}^{8}$. Now,
since $X\left( e_{0},u,v\right) =u\times v$ for any $u$, $v$, the following
diagram is commutative:%
\begin{equation}
\begin{array}{ccc}
E_{2,7}^{1} & \overset{\Phi }{\longrightarrow } & E_{3,8}^{1} \\ 
\uparrow \sigma _{2} &  & \uparrow \sigma _{3} \\ 
G\left( 2,7\right) & \overset{\phi }{\longrightarrow } & G\left( 3,8\right) 
\text{.}%
\end{array}
\label{diagrama}
\end{equation}

\begin{theorem}
\label{TeoSigma2}The section $\sigma _{2}$ is a harmonic map.
\end{theorem}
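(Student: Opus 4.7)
The plan is to exploit the commutative diagram (\ref{diagrama}) together with Theorem~\ref{TeoSigma3} and Proposition~\ref{PacoCarmelo}(b). The key structural observation I would first establish is that $\phi : G(2,7)\to G(3,8)$ is a \emph{totally geodesic} isometric embedding, and that $\Phi$ is a fibre-wise isometry intertwining the canonical metric connections on $E_{2,7}$ and $E_{3,8}$. Granted these two facts, second covariant derivatives and curvature tensors of $\sigma_2$ at a point $P\in G(2,7)$ translate directly into the corresponding quantities for $\sigma_3$ at $\phi(P)$, evaluated on the tangent directions in the image of $d\phi$. The harmonicity of $\sigma_2$ then follows by restricting the computations already performed for $\sigma_3$.

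For the totally geodesic claim, $\phi$ realises $G(2,7)$ as the symmetric subspace of $G(3,8)$ stabilising $e_{0}$, and at $P=e_{1}\wedge e_{2}$ the differential $d\phi$ identifies the orthonormal basis $\{e_{j}^{\ell}:\ell=1,2,\ j=3,\dots,7\}$ of $T_{P}G(2,7)$ with the like-named vectors in $T_{\phi(P)}G(3,8)$; the radial geodesics $\gamma_{j}^{\ell}$ in (\ref{geodegrass}) with $\ell\in\{1,2\}$ are contained in $\phi(G(2,7))$. For the connection compatibility I would simply compare formula~(\ref{DerivadaCovariante}) on both bundles: under the identification $\mathbb{R}^{7}\equiv e_{0}^{\bot}\subset\mathbb{R}^{8}$, orthogonal projection onto $\phi(P)^{\bot}\subset\mathbb{R}^{8}$ restricts to projection onto $P^{\bot}\subset\mathbb{R}^{7}$, so the two covariant derivatives coincide.

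With this in hand, I would evaluate $\Delta\sigma_{2}$ at $P=e_{1}\wedge e_{2}$ by quoting Lemma~\ref{lema1}: each term $\nabla_{e_{j}^{\ell}}\nabla_{E_{j}^{\ell}}\sigma_{3}$ equals $(\delta_{j3}-1)e_{3}$, so summing over $\ell\in\{1,2\}$ and $j\in\{3,\dots,7\}$ gives $\Delta\sigma_{2}(P)=-8\,e_{3}=-8\,\sigma_{2}(P)$, yielding vertical harmonicity via Proposition~\ref{PacoCarmelo}(a). For the vanishing of the $1$-form $\mathcal{R}_{\sigma_{2}}$ at $P$, Lemma~\ref{curvatura} gives $R_{e_{i}^{\ell},e_{j}^{k}}\sigma_{3}=0$ whenever $k\neq\ell$, so
\begin{equation*}
\mathcal{R}_{\sigma_{2}}(e_{i}^{\ell})=\sum_{k=1}^{2}\sum_{j=3}^{7}\bigl\langle R_{e_{i}^{\ell},e_{j}^{k}}\sigma_{3},\nabla_{e_{j}^{k}}\sigma_{3}\bigr\rangle=\sum_{j=3}^{7}\bigl\langle R_{e_{i}^{\ell},e_{j}^{\ell}}\sigma_{3},\nabla_{e_{j}^{\ell}}\sigma_{3}\bigr\rangle,
\end{equation*}
which was already shown to be zero in the calculation leading to (\ref{Rsigma3=0}). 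Finally, since $G_{2}\subset SO(7)$ acts transitively on $G(2,7)$ by isometries preserving $\sigma_{2}$ and the connection, both identities $\Delta\sigma_{2}=-8\sigma_{2}$ and $\mathcal{R}_{\sigma_{2}}=0$ propagate globally, and Proposition~\ref{PacoCarmelo}(b) delivers the conclusion.

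The main obstacle I anticipate is not the arithmetic but the careful justification that $\phi$ is totally geodesic and that $\Phi$ intertwines the two connections, because the entire argument relies on being able to transplant the curvature and second-derivative data for $\sigma_{3}$ wholesale. Once that identification is secured, the proof is essentially a bookkeeping exercise combining Lemmas~\ref{lema1}, \ref{lema2}, \ref{lema3} and~\ref{curvatura} with the fact that exactly the $k=\ell$ contributions survive in the relevant sums.
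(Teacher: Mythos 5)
Your proposal is correct and follows essentially the same route as the paper: both exploit the commutative diagram~(\ref{diagrama}), the fact that $\phi$ is a totally geodesic embedding, and the fact that $\Phi$ is a connection-preserving Riemannian bundle morphism, and both conclude by Proposition~\ref{PacoCarmelo} after reducing the computations to those already done for $\sigma_3$. The only cosmetic difference is that the paper first establishes the clean algebraic relation $(\Delta_{E_{3,8}}\sigma_3)\circ\phi = -4\,\sigma_3\circ\phi + \Phi\circ(\Delta_{E_{2,7}}\sigma_2)$ and the pullback identity $\mathcal{R}_{\sigma_2}=\phi^*\mathcal{R}_{\sigma_3}$, and then feeds in $\Delta\sigma_3=-12\sigma_3$ and $\mathcal{R}_{\sigma_3}\equiv 0$, whereas you sum the Lemma~\ref{lema1} and (\ref{Rsigma3=0}) contributions directly over the $G(2,7)$ frame $\ell\in\{1,2\}$, $j\in\{3,\dots,7\}$; both yield $\Delta\sigma_2=-8\sigma_2$ and $\mathcal{R}_{\sigma_2}\equiv 0$ by the same mechanism.
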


\begin{proof}
Using that $\Phi $ is a Riemannian bundle morphism and that $\phi :G\left(
2,7\right) \rightarrow G\left( 3,8\right) $ is totally geodesic, one obtains%
\begin{equation*}
\left( \Delta _{E_{3,8}}\sigma _{3}\right) \circ \phi =-4\sigma _{3}\circ
\phi +\Phi \circ \left( \Delta _{E_{2,7}}\sigma _{2}\right) \text{.}
\end{equation*}%
Since we know from the proof of Proposition \ref{sarmonica} that $\Delta
\sigma _{3}=-12\sigma _{3}$, we deduce that $\Delta _{E_{2,7}}\sigma
_{2}=-8\sigma _{2}$, since $\Phi $ is one to one. Similarly, we can verify
that $\mathcal{R}_{\sigma _{2}}=\phi ^{\ast }\mathcal{R}_{\sigma _{3}}$.
Consequently $\sigma _{2}$ is a harmonic map by Proposition \ref{PacoCarmelo}%
.
\end{proof}

%\subsection{Non-existence of parallel sections\nopunct}

\subsection{Non-existence of parallel sections}

Here we prove Proposition \ref{ParallelSections}.

\begin{proof}[Proof of Proposition\textbf{\ }\protect\ref{ParallelSections}]

Let us assume that there is a parallel section $V$ of $E_{3,8}^{1}$. We put $%
P_{o}=e_{0}\wedge e_{1}\wedge e_{2}$ and $V\left( P_{o}\right) =\left(
P_{o},v\right) $. In particular, $v\perp e_{\ell }$ for $\ell =0,1,2$ and $v$
has unit norm. Let $w\in \mathbb{R}^{8}$ be a unit vector orthogonal to $v$
and to $P_{o}$.

For each $a\in \mathbb{R}$ we consider the orthonormal basis $\left\{
e_{0},u_{a},v_{a}\right\} $ of $P_{o}$, where%
\begin{equation*}
u_{a}=\cos a\text{ }e_{1}+\sin a\text{ }e_{2}\text{\ \ \ \ \ and\ \ \ \ \ }%
v_{a}=-\sin a\text{ }e_{1}+\cos a\text{ }e_{2}\text{,}
\end{equation*}%
and the curve $\gamma _{a}$ in $G\left( 3,8\right) $ defined by%
\begin{equation*}
\gamma _{a}\left( t\right) =e_{0}\wedge \left( \cos t\text{ }u_{a}+\sin t%
\text{ }v\right) \wedge \left( \cos t\text{ }v_{a}+\sin t\text{ }w\right) 
\text{.}
\end{equation*}

Let us see that $V\left( \gamma _{a}\left( t\right) \right) =-\sin t$ $%
u_{a}+\cos t$ $v$. Since $V\left( \gamma _{a}\left( 0\right) \right) =v$, it
suffices to show that the expression in the right hand side is parallel
along $\gamma _{a}$. We compute%
\begin{equation*}
\left. \tfrac{D}{ds}\right\vert _{t}\left( -\sin s\text{ }u_{a}+\cos s\text{ 
}v\right) =\pi _{t}\left( -\cos t\text{ }u_{a}-\sin t\text{ }v\right) \text{,%
}
\end{equation*}%
where $\pi _{t}$ is the orthogonal projection onto $\gamma _{a}\left(
t\right) ^{\bot }$. That is, if we call%
\begin{equation*}
z=-\cos t\text{ }u_{a}-\sin t\text{ }v\text{,\ \ \ }z_{1}=\cos t\text{ }%
u_{a}+\sin t\text{ }v\text{, \ \ \ }z_{2}=\cos t\text{ }v_{a}+\sin t\text{ }w%
\text{,}
\end{equation*}%
we have%
\begin{equation*}
\left. \tfrac{D}{ds}\right\vert _{t}\left( -\sin s\text{ }u_{a}+\cos s\text{ 
}v\right) =z-\left\langle z,e_{0}\right\rangle e_{0}-\left\langle
z,z_{1}\right\rangle z_{1}-\left\langle z,z_{2}\right\rangle z_{2}=0\text{.}
\end{equation*}

Now,\ since $V$ is a parallel section by the hypothesis, if we fix $0<t<\pi $%
, we have that $V\circ c$ is parallel along the curve $c:\mathbb{R}%
\rightarrow G\left( 3,8\right) $, $c\left( a\right) =\gamma _{a}\left(
t\right) $. But%
\begin{equation*}
\tfrac{D}{da}\left( V\left( c\left( a\right) \right) \right) =\pi _{a}\left( 
\tfrac{d}{da}V\left( c\left( a\right) \right) \right) =\pi _{a}\left( -\sin t%
\text{ }v_{a}\right) \neq 0\text{,}
\end{equation*}%
since $\frac{d}{da}u_{a}=v_{a}$ and $v_{a}$ is not in the kernel of $\pi
_{a} $. Thus, we arrived at a contradiction. Taking $t$ small enough, we can
see that there are no parallel sections on any neighborhood of $P_{o}$. The
assertion for $E_{2,7}^{1}$ follows using the diagram (\ref{diagrama}).
\end{proof}

\section[Normal complex section]{The energy of the normal complex section of 
$G\left( 2,8\right) $ associated with the triple cross product}

\subsection{Normal skew-symmetric sections}

For a plane $P\in G\left( 2,8\right) $, recall from (\ref{SkewP}) the
definition of $ \operatorname{Skew}_{P}\left( \mathbb{R}^{8}\right) $. On the vector
bundle $E\rightarrow G\left( 2,8\right) $ of (\ref{E2,8})\ we have a
canonical metric connection: Let $\Pi _{P}$ be the orthogonal projection of $%
 \operatorname{Skew}\left( \mathbb{R}^{8}\right) $ onto $ \operatorname{Skew}_{P}\left( 
\mathbb{R}^{8}\right) $. For a vector field $Y$ on $G\left( 2,8\right) $ and
a section $\mathfrak{T}\in \Gamma \left( G\left( 2,8\right) ,E\right) $ we
set%
\begin{equation*}
\left( \nabla _{Y}\mathfrak{T}\right) _{P}=\left( P,\Pi _{P}\left( \left(
dT\right) _{P}\left( Y_{P}\right) \right) \right) \text{,}
\end{equation*}%
where $\mathfrak{T}\left( P\right) =\left( P,T\left( P\right) \right) \in
E_{P}$ with $T:G\left( 2,8\right) \rightarrow  \operatorname{Skew}\left( \mathbb{R}%
^{8}\right) \subset \mathbb{R}^{8\times 8}$. It is easy to see that $\nabla $
is a metric connection on $E$, whose covariant derivative is the one given
in (\ref{DerCovJ}).

Next we present two lemmas that will be useful later.

\begin{lemma}
Let $P\in G\left( 2,8\right) $ and let $\pi _{P}$ be the orthogonal
projection of $\mathbb{R}^{8}$ onto $P^{\bot }$. Then $\Pi _{P}\left(
T\right) =\pi _{P}\circ T\circ \pi _{P}$ for all $T\in  \operatorname{Skew}\left( 
\mathbb{R}^{8}\right) $.
\end{lemma}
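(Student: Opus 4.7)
The plan is to verify directly the two properties that characterize $\Pi_P$ as the orthogonal projection onto $\operatorname{Skew}_P(\mathbb{R}^8)$ with respect to the inner product $\langle A,B\rangle=\tfrac{1}{6}\operatorname{tr}(A^tB)$: first, that $\pi_P\circ T\circ\pi_P$ lies in $\operatorname{Skew}_P(\mathbb{R}^8)$, and second, that the difference $T-\pi_P\circ T\circ\pi_P$ is orthogonal to every element of $\operatorname{Skew}_P(\mathbb{R}^8)$.

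Membership in $\operatorname{Skew}_P(\mathbb{R}^8)$ is immediate. Skew-symmetry of $\pi_P T\pi_P$ follows from $\pi_P^t=\pi_P$ and $T^t=-T$, and the vanishing on $P$ is automatic because the rightmost factor $\pi_P$ kills every vector of $P$.

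For the orthogonality, the key observation I would use is that any $U\in\operatorname{Skew}_P(\mathbb{R}^8)$ in fact satisfies the two-sided absorption identity $U=\pi_P U\pi_P$. Indeed, writing $\rho_P=I-\pi_P$ for the orthogonal projection onto $P$, the condition $U|_P=0$ is equivalent to $U\rho_P=0$; taking the transpose and using $U^t=-U$ together with $\rho_P^t=\rho_P$ yields the complementary identity $\rho_P U=0$, so $U=U\pi_P=\pi_P U=\pi_P U\pi_P$. Once this is in hand, cyclicity of the trace together with $\pi_P^2=\pi_P$ reduces the computation to
\begin{equation*}
\operatorname{tr}\bigl((\pi_P T\pi_P)^t U\bigr)
=\operatorname{tr}(\pi_P T^t\pi_P U)
=\operatorname{tr}(T^t\pi_P U\pi_P)
=\operatorname{tr}(T^t U),
\end{equation*}
so $\langle\pi_P T\pi_P,U\rangle=\langle T,U\rangle$ and therefore $\langle T-\pi_P T\pi_P,U\rangle=0$ for every $U\in\operatorname{Skew}_P(\mathbb{R}^8)$.

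There is no substantive obstacle; the argument is a short linear-algebra verification. The one conceptual point is recognizing the two-sided absorption $U=\pi_P U\pi_P$ on $\operatorname{Skew}_P(\mathbb{R}^8)$, which is precisely what makes the symmetric expression $\pi_P T\pi_P$ (rather than a one-sided truncation like $\pi_P T$ or $T\pi_P$) the correct candidate for the projection.
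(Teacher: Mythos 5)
Your proof is correct, and it takes a slightly different route than the paper. The paper verifies that $\mathcal{P}(T)=\pi_P T\pi_P$ is a self-adjoint idempotent on $\operatorname{Skew}(\mathbb{R}^8)$ (idempotence from $\pi_P^2=\pi_P$, self-adjointness via cyclicity of the trace) and then identifies its fixed-point set with $\operatorname{Skew}_P(\mathbb{R}^8)$; together these three facts characterize an orthogonal projection. You instead verify, for each individual $T$, the two conditions that characterize $\mathcal{P}(T)$ as the nearest point: $\mathcal{P}(T)\in\operatorname{Skew}_P(\mathbb{R}^8)$ and $T-\mathcal{P}(T)\perp\operatorname{Skew}_P(\mathbb{R}^8)$. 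The common technical core is the same in both proofs --- cyclicity of the trace, and the observation that a skew-symmetric $U$ vanishing on $P$ automatically preserves $P^\perp$, which you package neatly as the two-sided absorption $U=\pi_P U\pi_P$ (the paper proves the equivalent statement that such $U$ are exactly the fixed points of $\mathcal{P}$). Your route is a bit more economical because it bypasses idempotence and self-adjointness of $\mathcal{P}$ as an operator; the paper's route has the mild advantage of exhibiting $\mathcal{P}$ abstractly as an orthogonal projection operator, which is the form in which $\Pi_P$ enters the covariant derivative \eqref{DerCovJ}. Either way the content is a short linear-algebra verification, and yours is complete.
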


\begin{proof}
We call $\mathcal{P}\left( T\right) =\pi _{P}\circ T\circ \pi _{P}$, which
is in $ \operatorname{Skew}\left( \mathbb{R}^{8}\right) $ since $\pi _{P}$ is self
adjoint. First we show that $\mathcal{P}$ is an orthogonal projection, that
is, $\mathcal{P}^{2}=\mathcal{P}$ and that $\mathcal{P}$ is self-adjoint.
The former is true since $\left( \pi _{P}\right) ^{2}=\pi _{P}$ and for the
latter, given $S,T\in  \operatorname{Skew}\left( \mathbb{R}^{8}\right) $, we compute%
\begin{equation*}
\left\langle \mathcal{P}S,T\right\rangle =-\tfrac{1}{6}  \operatorname{tr}\left( \pi
_{P}\circ S\circ \pi _{P}\circ T\right) =-\tfrac{1}{6}  \operatorname{tr}\left(
S\circ \pi _{P}\circ T\circ \pi _{P}\right) =\left\langle S,\mathcal{P}%
T\right\rangle \text{.}
\end{equation*}

We verify now that the set of fixed points of $\mathcal{P}$ is $ \operatorname{Skew}%
_{P}\left( \mathbb{R}^{8}\right) $. If $\mathcal{P}\left( S\right) =S$, then
they coincide on $P$, in particular, they vanish on $P$ and so $S\in  \operatorname{%
Skew}_{P}\left( \mathbb{R}^{8}\right) $. Now, if $S\ $is in this subspace,
then $S$ and $\mathcal{P}\left( S\right) $ are $0$ on $P$ and so $S$
preserves $P^{\bot }$, since $S$ is skew symmetric. Hence, given $v\in
P^{\bot }$, $\mathcal{P}\left( S\right) v=\pi _{p}\left( Sv\right) =Sv$.
Thus, $\mathcal{P}\left( S\right) =S$. Consequently, $\mathcal{P}\left(
S\right) =S$ if and only if $S\in  \operatorname{Skew}_{P}\left( \mathbb{R}%
^{8}\right) $, as desired.
\end{proof}

\begin{lemma}
\label{derivadas2,8}Let $P:\mathbb{R}^{2}\rightarrow G\left( 2,8\right) $ be
a parametrized surface and let $\mathfrak{T}=\left(  \operatorname{id},T\right) $ be
a smooth section of $E\rightarrow G\left( 2,8\right) $. We denote%
\begin{equation}
T_{1}=\left. \tfrac{d}{ds}\right\vert _{0}T\left( P_{0,s}\right) \text{ }\ \
\ \ \ \text{and }\ \ \ \ \ T_{2}=\left. \tfrac{\partial ^{2}}{\partial
t\partial s}\right\vert _{\left( 0,0\right) }\left( P_{t,s}\right)
\label{T1T2}
\end{equation}%
\emph{(}both in $\mathbb{R}^{8\times 8}$\emph{)} and by $\pi _{t}$ the
orthogonal projection onto $\left( P_{t,0}\right) ^{\bot }$. Then%
\begin{equation}
\left. \tfrac{D}{ds}\right\vert _{0}\mathfrak{T}(P_{0,s})=\left( P_{0,0},\pi
_{0}\circ T_{1}\circ \pi _{0}\right) \text{,}  \label{DJ1}
\end{equation}%
\begin{equation}
\left. \tfrac{D^{2}}{dtds}\right\vert _{\left( 0,0\right) }\mathfrak{T}%
(P_{t,s})=\left( P_{0,0},\pi _{0}\circ \left( \pi _{0}^{\prime }\circ
T_{1}+T_{2}+T_{1}\circ \pi _{0}^{\prime }\right) \circ \pi _{0}\right) \text{%
.}  \label{DJ2}
\end{equation}
\end{lemma}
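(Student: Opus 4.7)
The plan is to deduce both identities directly from the formula $(\ref{DerCovJ})$ for the covariant derivative on $E\to G(2,8)$, combined with the explicit expression $\Pi_P(T)=\pi_P\circ T\circ\pi_P$ furnished by the preceding lemma. No new geometric input is needed; the argument is purely a chain-rule computation along the two coordinate directions of the parametrized surface, in close analogy with Lemma \ref{derivadask,n}.

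For the first identity $(\ref{DJ1})$, I apply $(\ref{DerCovJ})$ to the curve $s\mapsto \mathfrak{T}(P_{0,s})=(P_{0,s},T(P_{0,s}))$: differentiating $T(P_{0,s})$ at $s=0$ in the ambient space $\mathbb{R}^{8\times 8}$ gives $T_1$, so the covariant derivative at $s=0$ equals $(P_{0,0},\Pi_{P_{0,0}}(T_1))=(P_{0,0},\pi_0\circ T_1\circ\pi_0)$ by the preceding lemma.

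For the second identity $(\ref{DJ2})$, I first apply $(\ref{DJ1})$ fiberwise: for each fixed $t$, the $s$-derivative yields
\begin{equation*}
\left.\tfrac{D}{ds}\right\vert_0 \mathfrak{T}(P_{t,s})=\left(P_{t,0},\pi_t\circ S_1(t)\circ\pi_t\right),\qquad S_1(t):=\left.\tfrac{d}{ds}\right\vert_0 T(P_{t,s}).
\end{equation*}
This is a smooth section of $E$ along the base curve $t\mapsto P_{t,0}$, so I differentiate it once more via $(\ref{DerCovJ})$ and the preceding lemma, i.e.\ project the ordinary $t$-derivative of $\pi_t\circ S_1(t)\circ\pi_t$ at $t=0$ through $\Pi_{P_{0,0}}=\pi_0\circ(\cdot)\circ\pi_0$. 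By the Leibniz rule this ordinary derivative equals $\pi_0'\circ T_1\circ\pi_0+\pi_0\circ T_2\circ\pi_0+\pi_0\circ T_1\circ\pi_0'$, using $S_1(0)=T_1$ and $S_1'(0)=T_2$. Sandwiching between two copies of $\pi_0$ and exploiting the idempotence $\pi_0^2=\pi_0$ to absorb the inner projections produces exactly the right-hand side of $(\ref{DJ2})$.

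I do not anticipate any substantial obstacle; the only thing to watch is the order in which the three summands appear after the outer $\pi_0$'s are applied, but idempotence of $\pi_0$ makes the simplifications automatic. The novelty relative to Lemma \ref{derivadask,n} is simply that the fiber of $E$ consists of skew-symmetric endomorphisms rather than vectors, so the projection $\Pi_P$ acts by conjugation with $\pi_P$, accounting for the symmetric placement of $\pi_0$ in both $(\ref{DJ1})$ and $(\ref{DJ2})$.
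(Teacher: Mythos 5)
Your proof is correct and follows exactly the same route as the paper, which simply says the argument is analogous to that of Lemma \ref{derivadask,n}: apply the covariant derivative formula \eqref{DerCovJ} together with the identification $\Pi_P(T)=\pi_P\circ T\circ\pi_P$ from the preceding lemma, differentiate $\pi_t\circ S_1(t)\circ\pi_t$ by Leibniz, and absorb the inner projections using $\pi_0^2=\pi_0$. You have merely written out the details that the paper leaves implicit.
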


\begin{proof}
It is similar to the proof of Lemma \ref{derivadask,n}, using the previous
lemma.
\end{proof}

\subsection{Harmonicity of $\mathfrak{J}$}

In this section we call $X$ the triple cross product $X_{3,8}$. Also,
sometimes we shall omit to specify the foot planes of the elements of $E$.

Theorem \ref{Spin7} implies that the group $ \operatorname{Spin}\left( 7\right) $
acts transitively on $G\left( 2,8\right) $ through $g(u\wedge v)=gu\wedge gv$%
. It also acts on $E^{1}$ by means of $g\left( u\wedge v,T\right) =\left(
g\left( u\wedge v\right) ,gTg^{-1}\right) $, for $T\in  \operatorname{Skew}_{u\wedge
v}\left( \mathbb{R}^{8}\right) $, since $ \operatorname{Spin}\left( 7\right) \subset
SO\left( 8\right) $. It is easy to verify that the section $\mathfrak{J}%
:G\left( 2,8\right) \rightarrow E^{1}$ is invariant by this action.

The curves%
\begin{equation}
\gamma _{j}^{0}\left( t\right) =\left( \cos t\,e_{0}+\sin t\,e_{j}\right)
\wedge e_{1}\;\;\;\text{and}\;\;\;\gamma _{j}^{1}\left( t\right)
=e_{0}\wedge \left( \cos t\,e_{1}+\sin t\,e_{j}\right) \text{ }
\label{geodesica28}
\end{equation}%
or equivalently, $\gamma _{j}^{\ell }\left( t\right) =\left( -1\right)
^{\ell }\left( \cos t\,e_{\ell }+\sin t\,e_{j}\right) \wedge e_{\ell +1}$,
for $j=2,\dots ,7$, with $\ell =0,1$ $ \operatorname{mod}2$, are geodesics of $%
G\left( 2,8\right) $. In the following we will consider always $\ell $ $%
 \operatorname{mod}2$.

We consider vector fields $E_{j}^{\ell }$ on a normal neighborhood of $%
e_{0}\wedge e_{1}$ in $G\left( 2,8\right) $ analogous to the ones on $%
G\left( 3,8\right) $ in the previous section.

\begin{lemma}
\label{Derivadas28} Let $\mathfrak{T}$ be a section of $E^{1}\rightarrow
G\left( 2,8\right) $. For $k,\ell =0,1$ and $i,j=2,\dots ,7$, we have that%
\begin{equation*}
\nabla _{e_{i}^{k}}\nabla _{E_{j}^{\ell }}\mathfrak{T}=\left. \dfrac{%
D^{\gamma _{i}^{k}}}{dt}\right\vert _{0}\left( \left. \dfrac{D}{ds}%
\right\vert _{0}\mathfrak{T}(P_{t,s})\right) \text{,}
\end{equation*}%
where $P_{t,s}=\exp (te_{i}^{k})\gamma _{j}^{\ell }\left( s\right) $.
\end{lemma}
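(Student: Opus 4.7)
The plan is to imitate verbatim the proof of Lemma \ref{dercov2}, since the statement and the argument are insensitive to whether the bundle under consideration is $E_{3,8}\to G(3,8)$ or $E\to G(2,8)$; what matters is only that the base is a symmetric space and that the vector fields $E_j^{\ell}$ are defined to be parallel along the radial geodesics from $e_0\wedge e_1$. The covariant derivative $\nabla$ on $E$ defined by $(\ref{DerCovJ})$ is a metric connection, so iterated covariant derivatives along a parametrized surface can be computed in the usual way.

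First I would unpack the two covariant derivatives appearing on the left-hand side: by definition,
\[
\nabla_{e_i^k}\nabla_{E_j^{\ell}}\mathfrak{T}=\left.\frac{D^{\gamma_i^k}}{dt}\right|_0\bigl((\nabla_{E_j^{\ell}}\mathfrak{T})(\gamma_i^k(t))\bigr),
\]
so the lemma amounts to identifying $(\nabla_{E_j^{\ell}}\mathfrak{T})(\gamma_i^k(t))$ with $\left.\tfrac{D}{ds}\right|_0\mathfrak{T}(P_{t,s})$. For this it suffices to verify two things at every $t$: (i) $P_{t,0}=\gamma_i^k(t)$, which is immediate from the definition of $P_{t,s}=\exp(te_i^k)\gamma_j^{\ell}(s)$ and the fact that $\gamma_j^{\ell}(0)=e_0\wedge e_1$; and (ii) $\left.\tfrac{d}{ds}\right|_0 P_{t,s}=E_j^{\ell}(\gamma_i^k(t))$.

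The second point is the only content of the argument, and is proved exactly as in Lemma \ref{dercov2}. Since $G(2,8)=SO(8)/(SO(2)\times SO(6))$ is a Riemannian symmetric space and $e_i^k\in\mathfrak{m}$ (with $\mathfrak{m}$ the $-1$-eigenspace of the Cartan involution), parallel transport along the one-parameter geodesic $t\mapsto\gamma_i^k(t)=\exp(te_i^k)(e_0\wedge e_1)$ coincides with $d\exp(te_i^k)_{e_0\wedge e_1}$ acting on $\mathfrak{m}\cong T_{e_0\wedge e_1}G(2,8)$. By the definition of $E_j^{\ell}$ as the parallel extension of $e_j^{\ell}$ along radial geodesics, this gives
\[
E_j^{\ell}(\gamma_i^k(t))=d\exp(te_i^k)_{e_0\wedge e_1}(e_j^{\ell})=\left.\frac{d}{ds}\right|_0\exp(te_i^k)\gamma_j^{\ell}(s)=\left.\frac{d}{ds}\right|_0 P_{t,s},
\]
which is precisely (ii).

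I do not expect any real obstacle: the Grassmannian $G(2,8)$ enjoys the same symmetric-space properties that were used for $G(3,8)$ in Lemma \ref{dercov2}, and the abstract identity for iterated covariant derivatives of a section of a metric vector bundle along a parametrized surface depends only on the connection being metric, not on the specific fiber. The lemma is thus a formal transcription of Lemma \ref{dercov2}, with $\gamma_j^{\ell}$ now denoting the geodesics of $G(2,8)$ given in (\ref{geodesica28}) and $\mathfrak{T}$ taking values in $E$ instead of $E_{3,8}$.
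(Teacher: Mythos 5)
Your proposal is correct and follows exactly the same route as the paper, which in fact disposes of this lemma by simply observing that it is proved in the same way as Lemma \ref{dercov2}. Your explicit transcription of that argument to $G(2,8)$, using the symmetric-space identification of parallel transport along $\gamma_i^k$ with $d\exp(te_i^k)_{e_0\wedge e_1}$, is precisely what the authors have in mind.
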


\begin{proof}
It is similar to the proof of Lemma \ref{dercov2}.
\end{proof}

In the next lemma we consider the section $\mathfrak{J}$ associated with the
triple cross product defined in (\ref{Jota}) and follow the notation of (\ref%
{T1T2}).

\begin{lemma}
Let $P:\mathbb{R}^{2}\rightarrow G\left( 2,8\right) $ be the parametrized
surface given by $P_{t,s}=\exp (te_{i}^{k})\gamma _{j}^{\ell }\left(
s\right) $. For the section $\mathfrak{J}=\left(  \operatorname{id},J\right) $ of $%
E\rightarrow G\left( 2,8\right) $, we have that%
\begin{equation}
J_{1}=\left. \tfrac{d}{ds}\right\vert _{0}J_{P_{0,s}}=\left( -1\right)
^{\ell }J_{e_{j}\wedge e_{\ell +1}}\text{.}  \label{J1}
\end{equation}
\end{lemma}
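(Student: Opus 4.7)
The plan is to compute $J_1$ directly from the definition of $J$. Since $P_{0,s}=\gamma_j^\ell(s)$, the task reduces to differentiating the curve $s\mapsto J_{\gamma_j^\ell(s)}$ in $ \operatorname{Skew}(\mathbb{R}^8)$; because $J_1$ is an ordinary derivative in $\mathbb{R}^{8\times 8}$ (no projection onto $ \operatorname{Skew}_P(\mathbb{R}^8)$ is involved in the statement), none of the bundle-connection machinery from Lemmas~\ref{derivadas2,8} and \ref{Derivadas28} is needed here.

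I would unify the two cases $\ell=0,1$ by using the compact presentation
$\gamma_j^\ell(s) = (-1)^\ell(\cos s\,e_\ell + \sin s\,e_j)\wedge e_{\ell+1}$
(indices mod $2$), together with the observation that reversing the orientation of $u\wedge v$ changes the sign of $J_{u\wedge v}$: indeed, if $\{u,v\}$ is positively oriented in $u\wedge v$ then $\{v,u\}$ is positively oriented in $-(u\wedge v)$, and $J_{-(u\wedge v)}(w)=X(v,u,w)=-X(u,v,w)=-J_{u\wedge v}(w)$ by the skew-symmetry of $X$ in its first two arguments. Hence, for every $w\in\mathbb{R}^8$ and every $s$,
$$J_{\gamma_j^\ell(s)}(w) = (-1)^\ell\, X\bigl(\cos s\,e_\ell + \sin s\,e_j,\; e_{\ell+1},\; w\bigr).$$

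Expanding by trilinearity of $X$ and differentiating at $s=0$, the cosine term disappears and the sine term gives
$$J_1(w) \;=\; \left.\tfrac{d}{ds}\right|_0 J_{\gamma_j^\ell(s)}(w) \;=\; (-1)^\ell\, X(e_j, e_{\ell+1}, w) \;=\; (-1)^\ell\, J_{e_j\wedge e_{\ell+1}}(w),$$
which is the desired identity. The only point requiring care is the orientation bookkeeping: one must verify that for $\ell=1$ the unified formula indeed reproduces $\gamma_j^1(s)=e_0\wedge(\cos s\,e_1+\sin s\,e_j)$ from (\ref{geodesica28}), via the identity $-(u\wedge v)=v\wedge u$ for oriented planes. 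Apart from this routine sign check, no real obstacle is anticipated; the argument is essentially a one-line differentiation of a trilinear expression.
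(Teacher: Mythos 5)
Your proof is correct and follows essentially the same route as the paper: both use the unified expression $\gamma_j^\ell(s) = (-1)^\ell(\cos s\,e_\ell + \sin s\,e_j)\wedge e_{\ell+1}$, pull the sign $(-1)^\ell$ out via the orientation-reversal property of $J$, expand $X$ by trilinearity, and differentiate the resulting $\cos s$/$\sin s$ combination at $s=0$. The only difference is that you spell out explicitly why $J_{-(u\wedge v)}=-J_{u\wedge v}$ (skew-symmetry of $X$ in its first two arguments), a point the paper uses implicitly.
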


\begin{proof}
We evaluate $P_{0,s}=\gamma _{j}^{\ell }\left( s\right) $ at\ $w\in \mathbb{R%
}^{8}$ and obtain%
\begin{align}
J_{P_{0,s}}\left( w\right) & =\left( -1\right) ^{\ell }X\left( \cos
s\,e_{\ell }+\sin s\,e_{j},e_{\ell +1},w\right)  \label{JP} \\
& =\left( -1\right) ^{\ell }\left( \cos s\,X\left( e_{\ell },e_{\ell
+1},w\right) +\sin s\,X\left( e_{j},e_{\ell +1},w\right) \right)  \notag \\
& =\cos s\,J_{e_{0}\wedge e_{1}}\left( w\right) +\left( -1\right) ^{\ell
}\sin s\,J_{e_{j}\wedge e_{\ell +1}}\left( w\right) \text{,}  \notag
\end{align}%
from which we conclude that the desired expression is valid.
\end{proof}

\smallskip

Let $\left\{ e^{0},\dots ,e^{7}\right\} $ be the canonical dual basis of $%
\mathbb{R}^{8}$. We deduce from (\ref{propiedadX}) that $J_{e_{i}\wedge
e_{j}}=-J_{e_{j}\wedge e_{i}}$ and also the following properties of $J$,
which will be useful later:%
\begin{equation}
e^{k}\circ J_{e_{i}\wedge e_{j}}=-e^{i}\circ J_{e_{k}\wedge
e_{j}}=-e^{j}\circ J_{e_{i}\wedge e_{k}}=-\left\langle J_{e_{i}\wedge
e_{j}}\left( e_{k}\right) ,\cdot \right\rangle \text{.}  \label{properties}
\end{equation}

For convenience, given $A,B\in $ End~$\left( \mathbb{R}^{8}\right) $, we
denote $A\odot B=AB+BA$.

\begin{lemma}
\label{lema1J} For $\ell =0,1$ and $j=2,\dots ,7$, we have%
\begin{gather}
\nabla _{e_{j}^{\ell }}\mathfrak{J}=\left( -1\right) ^{\ell }\left(
J_{e_{j}\wedge e_{\ell +1}}-\left( e_{\ell }\otimes e^{\ell }\right) \odot
J_{e_{j}\wedge e_{\ell +1}}\right) \text{,}  \label{derivada1} \\
\nabla _{e_{j}^{\ell }}\nabla _{E_{j}^{\ell }}\mathfrak{J}=-J_{e_{0}\wedge
e_{1}}+\left( e_{j}\otimes e^{j}\right) \odot J_{e_{0}\wedge e_{1}}\text{.}
\label{derivada2}
\end{gather}
\end{lemma}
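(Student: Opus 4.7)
The plan is to apply Lemma \ref{derivadas2,8} to the parametrized surface
\[
P_{t,s} = \exp(t e_j^\ell)\,\gamma_j^\ell(s) = \gamma_j^\ell(t+s),
\]
as justified by Lemma \ref{Derivadas28}. Substituting $t+s$ for $s$ in expansion (\ref{JP}) gives
\[
T_1 = (-1)^\ell J_{e_j\wedge e_{\ell+1}}, \qquad T_2 = -J_{e_0\wedge e_1},
\]
and from $P_{t,0} = \gamma_j^\ell(t)$ I read off
\[
\pi_0 = \operatorname{id} - e_\ell\otimes e^\ell - e_{\ell+1}\otimes e^{\ell+1}, \qquad \pi_0' = -e_j\otimes e^\ell - e_\ell\otimes e^j.
\]

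For (\ref{derivada1}), formula (\ref{DJ1}) reduces to $\nabla_{e_j^\ell}\mathfrak{J} = \pi_0\, T_1\, \pi_0$. Expanding the nine resulting terms, the factor $e_{\ell+1}\otimes e^{\ell+1}$ annihilates $T_1$ from either side, since $J_{e_j\wedge e_{\ell+1}}(e_{\ell+1}) = 0$ (directly on the right, and by skew-symmetry on the left); moreover the sandwich $(e_\ell\otimes e^\ell)\,T_1\,(e_\ell\otimes e^\ell)$ vanishes because $\langle T_1 e_\ell, e_\ell\rangle = 0$. What survives is $T_1 - (e_\ell\otimes e^\ell)\odot T_1$, which is precisely (\ref{derivada1}).

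For (\ref{derivada2}), (\ref{DJ2}) reduces the task to computing $\pi_0\circ(\pi_0' T_1 + T_2 + T_1\pi_0')\circ \pi_0$. The contributions $(e_\ell\otimes e^j)\,T_1$ and $T_1\,(e_j\otimes e^\ell)$ vanish because $J_{e_j\wedge e_{\ell+1}}$ annihilates $e_j$. The remaining pieces $(e_j\otimes e^\ell)\,T_1$ and $T_1\,(e_\ell\otimes e^j)$ are rewritten via (\ref{properties}) together with the identity $J_{e_\ell\wedge e_{\ell+1}} = (-1)^\ell J_{e_0\wedge e_1}$; the two factors of $(-1)^\ell$ then cancel and produce $(e_j\otimes e^j)\,J_{e_0\wedge e_1}$ and $J_{e_0\wedge e_1}\,(e_j\otimes e^j)$ respectively. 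Adding $T_2 = -J_{e_0\wedge e_1}$, the bracket simplifies to $-J_{e_0\wedge e_1} + (e_j\otimes e^j)\odot J_{e_0\wedge e_1}$. The outer conjugation by $\pi_0$ then leaves this expression unchanged: $J_{e_0\wedge e_1}\in \operatorname{Skew}_{e_0\wedge e_1}(\mathbb{R}^8)$ is fixed by the projection lemma, and for $j\geq 2$ one has $\pi_0 e_j = e_j$ and $e^j\circ \pi_0 = e^j$, so both summands of $(e_j\otimes e^j)\odot J_{e_0\wedge e_1}$ survive the conjugation intact.

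The one genuinely delicate point is the sign bookkeeping in (\ref{derivada2}): both $T_1$ and the identification $J_{e_\ell\wedge e_{\ell+1}} = (-1)^\ell J_{e_0\wedge e_1}$ contribute a factor of $(-1)^\ell$, and their cancellation is exactly what makes the right-hand side independent of $\ell$. Everything else is a routine manipulation using the skew-symmetry of $J_{u\wedge v}$ and the permutation identities (\ref{properties}) inherited from the triple cross product.
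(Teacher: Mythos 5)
Your proof is correct and follows essentially the same route as the paper: same parametrized surface $P_{t,s}=\gamma_j^\ell(t+s)$, same $T_1$, $T_2$, $\pi_0$, $\pi_0'$, and the same use of the identities (\ref{properties}) and $J_{e_\ell\wedge e_{\ell+1}}=(-1)^\ell J_{e_0\wedge e_1}$. The only organizational difference is that the paper collapses $\pi_0\pi_0'=-e_j\otimes e^\ell$ at the outset, whereas you expand $\pi_0'$ into its two rank-one pieces, compute the inner bracket without the outer projections, and argue afterward that conjugation by $\pi_0$ leaves the result unchanged; both are the same algebra.
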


\begin{proof}
By Lemma \ref{Derivadas28}, we consider the parametrized surface%
\begin{equation*}
P_{t,s}=\exp \left( te_{j}^{\ell }\right) \gamma _{j}^{\ell }\left( s\right)
=\gamma _{j}^{\ell }\left( t+s\right) \text{.}
\end{equation*}%
By (\ref{JP}) we have that that $\mathfrak{J}\left( P_{t,s}\right) =\left(
P_{t,s},J_{P_{t,s}}\right) $ equals%
\begin{equation*}
\left( P_{t,s},\cos \left( t+s\right) J_{e_{0}\wedge e_{1}}+\left( -1\right)
^{\ell }\sin \left( t+s\right) J_{e_{j}\wedge e_{\ell +1}}\right) \text{.}
\end{equation*}%
In order to obtain the first expression, we use (\ref{DJ1}) with $T_{1}=J_{1}
$ given by (\ref{J1}) and 
\begin{equation*}
\pi _{t}= \operatorname{id}-\left( \cos t\,e_{\ell }+\sin t\,e_{j}\right) \otimes
\left( \cos t\,e^{\ell }+\sin t\,e^{j}\right) -e_{\ell +1}\otimes e^{\ell +1}%
\text{.}
\end{equation*}%
We compute%
\begin{equation}
J_{1}\pi _{0}=\left( -1\right) ^{\ell }J_{e_{j}\wedge e_{\ell +1}}\left( 
 \operatorname{id}-e_{\ell }\otimes e^{\ell }\right) \text{,}  \label{J1pi0}
\end{equation}%
since $J_{e_{j}\wedge e_{\ell +1}}\left( e_{\ell +1}\right) =0$. Then 
\begin{align*}
\pi _{0}J_{1}\pi _{0}& =\left(  \operatorname{id}-e_{0}\otimes e^{0}-e_{1}\otimes
e^{1}\right) \left( -1\right) ^{\ell }J_{e_{j}\wedge e_{\ell +1}}\left( 
 \operatorname{id}-e_{\ell }\otimes e^{\ell }\right)  \\
& =\left( -1\right) ^{\ell }\left(  \operatorname{id}-e_{\ell }\otimes e^{\ell
}\right) \left( J_{e_{j}\wedge e_{\ell +1}}-J_{e_{j}\wedge e_{\ell
+1}}\left( e_{\ell }\otimes e^{\ell }\right) \right) \text{,}
\end{align*}%
which equals the desired expression in (\ref{derivada1}), since $e^{\ell
}J_{e_{j}\wedge e_{\ell +1}}\left( e_{\ell }\right) =0$.

Now we verify the second identity. We use (\ref{DJ2}) with $J_{1}$ as above
and%
\begin{equation*}
J_{2}=\left. \tfrac{\partial ^{2}}{\partial t\partial s}\right\vert _{\left(
0,0\right) }J_{P_{t,s}}=-J_{e_{0}\wedge e_{1}}\text{.}
\end{equation*}

We have that $\pi _{0}^{\prime }=-e_{j}\otimes e^{\ell }-e_{\ell }\otimes
e^{j}$ and 
\begin{align*}
\pi _{0}\pi _{0}^{\prime }& =\left(  \operatorname{id}-e_{0}\otimes
e^{0}-e_{1}\otimes e^{1}\right) \left( -e_{j}\otimes e^{\ell }-e_{\ell
}\otimes e^{j}\right)  \\
& =-e_{j}\otimes e^{\ell }-e_{\ell }\otimes e^{j}+e_{\ell }\otimes
e^{j}=-e_{j}\otimes e^{\ell }\text{.}
\end{align*}%
Using (\ref{J1pi0}) and properties (\ref{properties}) we have%
\begin{align*}
\pi _{0}\pi _{0}^{\prime }J_{1}\pi _{0}& =-\left( e_{j}\otimes e^{\ell
}\right) \left( -1\right) ^{\ell }J_{e_{j}\wedge e_{\ell +1}}\left(  \operatorname{id}%
-e_{\ell }\otimes e^{\ell }\right)  \\
& =\left( -1\right) ^{\ell }\left( e_{j}\otimes e^{j}\right) J_{e_{\ell
}\wedge e_{\ell +1}}\left(  \operatorname{id}-e_{\ell }\otimes e^{\ell }\right)
=\left( e_{j}\otimes e^{j}\right) \left( J_{e_{0}\wedge e_{1}}\right) \text{,%
}
\end{align*}%
\begin{align*}
\pi _{0}J_{2}\pi _{0}& =\left(  \operatorname{id}-e_{0}\otimes e^{0}-e_{1}\otimes
e^{1}\right) \left( -J_{e_{0}\wedge e_{1}}\right) \left(  \operatorname{id}%
-e_{0}\otimes e^{0}-e_{1}\otimes e^{1}\right)  \\
& =-J_{e_{0}\wedge e_{1}}\text{.}
\end{align*}

Now, 
\begin{equation*}
\pi _{0}J_{1}\pi _{0}^{\prime }\pi _{0}=-\left( \pi _{0}\pi _{0}^{\prime
}J_{1}\pi _{0}\right) ^{t}=-\left( \left( e_{j}\otimes e^{j}\right) \left(
J_{e_{0}\wedge e_{1}}\right) \right) ^{t}=\left( J_{e_{0}\wedge
e_{1}}\right) \left( e_{j}\otimes e^{j}\right) \text{,}
\end{equation*}%
since $\pi _{0}$ and $\pi _{0}^{\prime }$ are self adjoint and $J_{1}$ is
skew symmetric. Therefore%
\begin{equation*}
\pi _{0}\left( \pi _{0}^{\prime }J_{1}+J_{2}+J_{1}\pi _{0}^{\prime }\right)
\pi _{0}=\left( e_{j}\otimes e^{j}\right) J_{e_{0}\wedge
e_{1}}-J_{e_{0}\wedge e_{1}}+J_{e_{0}\wedge e_{1}}\left( e_{j}\otimes
e^{j}\right) \text{, }
\end{equation*}%
as we wanted to see in (\ref{derivada2}).
\end{proof}

\begin{lemma}
\label{lema2J} For $\ell =0,1$, $i,j=2,\dots ,7$ with $i\neq j$, we have%
\begin{equation*}
\nabla _{e_{i}^{\ell }}\nabla _{E_{j}^{\ell }}\mathfrak{J}=\left(
e_{i}\otimes e^{j}\right) J_{e_{0}\wedge e_{1}}+J_{e_{0}\wedge e_{1}}\left(
e_{j}\otimes e^{i}\right) \text{.}
\end{equation*}
\end{lemma}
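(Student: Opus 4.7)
The plan is to mimic Lemma~\ref{lema2} but for the section $\mathfrak{J}$, applying Lemma~\ref{Derivadas28} to the parametrized surface $P_{t,s}=\exp(te_i^\ell)\gamma_j^\ell(s)$ with $k=\ell$ and $i\neq j$. First I would write out $P_{t,s}$ as $(-1)^\ell(\cos t\cos s\,e_\ell+\sin t\cos s\,e_i+\sin s\,e_j)\wedge e_{\ell+1}$ and, using trilinearity of $X_{3,8}$, expand $J_{P_{t,s}}$ as a linear combination of $J_{e_0\wedge e_1}$, $J_{e_i\wedge e_{\ell+1}}$ and $J_{e_j\wedge e_{\ell+1}}$ with trigonometric coefficients. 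From this, $J_1=(-1)^\ell J_{e_j\wedge e_{\ell+1}}$ (as in (\ref{J1})), and crucially $J_2=0$, since $\partial_t\partial_s|_{(0,0)}(\cos t\cos s)=0$ and the other two terms are separately linear in $t$ or $s$; this is the key simplification compared with Lemma~\ref{lema1J}.

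Next I would record $\pi_t=\operatorname{id}-(\cos t\,e_\ell+\sin t\,e_i)\otimes(\cos t\,e^\ell+\sin t\,e^i)-e_{\ell+1}\otimes e^{\ell+1}$, whence $\pi_0=\operatorname{id}-e_0\otimes e^0-e_1\otimes e^1$ and $\pi_0'=-e_i\otimes e^\ell-e_\ell\otimes e^i$. The composition telescopes, giving $\pi_0\circ\pi_0'=-e_i\otimes e^\ell$, exactly as in Lemma~\ref{lema2}. Then apply formula (\ref{DJ2}) from Lemma~\ref{derivadas2,8} with the simplification $J_2=0$, so the computation reduces to the two pieces $\pi_0\circ\pi_0'\circ J_1\circ\pi_0$ and $\pi_0\circ J_1\circ\pi_0'\circ\pi_0$.

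For the first piece I would use the identity (\ref{properties}) to rewrite $e^\ell\circ J_{e_j\wedge e_{\ell+1}}=-(-1)^\ell\,e^j\circ J_{e_0\wedge e_1}$; combining this with the observation $J_{e_0\wedge e_1}\circ\pi_0=J_{e_0\wedge e_1}$ (since $e_0,e_1\in\ker J_{e_0\wedge e_1}$) collapses $\pi_0\pi_0'J_1\pi_0$ to $(e_i\otimes e^j)\,J_{e_0\wedge e_1}$, which is the first summand in the stated formula. The second piece $\pi_0 J_1\pi_0'\pi_0$ is then obtained for free by transposition: $\pi_0$ and $\pi_0'$ are self-adjoint and $J_1$ is skew, so this term equals $-(\pi_0\pi_0'J_1\pi_0)^t=-\bigl((e_i\otimes e^j)J_{e_0\wedge e_1}\bigr)^t=J_{e_0\wedge e_1}(e_j\otimes e^i)$, the second summand. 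Adding the two pieces produces exactly the right-hand side of the statement.

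The step I expect to require the most care is the sign bookkeeping: keeping straight the factor $(-1)^\ell$ from (\ref{geodesica28}), the anti-symmetry $J_{e_a\wedge e_b}=-J_{e_b\wedge e_a}$, and the three equalities in (\ref{properties}), so that the final expression involves only $J_{e_0\wedge e_1}$ and no longer depends on $\ell$. Everything else is parallel to the argument for $\sigma_3$ in Lemma~\ref{lema2}, with the $J_2$-term vanishing and the transpose trick replacing the explicit second-derivative computation.
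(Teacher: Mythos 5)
Your proposal is correct and follows essentially the same route as the paper: the parametrized surface $P_{t,s}=\exp(te_i^\ell)\gamma_j^\ell(s)$, the observation $J_2=0$, the computation $\pi_0\pi_0'=-e_i\otimes e^\ell$, the use of (\ref{J1pi0}) and (\ref{properties}) to reduce $\pi_0\pi_0'J_1\pi_0$ to $(e_i\otimes e^j)J_{e_0\wedge e_1}$, and the transpose trick to get $\pi_0 J_1\pi_0'\pi_0 = J_{e_0\wedge e_1}(e_j\otimes e^i)$ all match the paper's argument.
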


\begin{proof}
If $i\neq j$, by Lemma \ref{Derivadas28}, we consider the parametrized
surface%
\begin{equation*}
P_{t,s}=\exp \left( te_{i}^{\ell }\right) \gamma _{j}^{\ell }\left( s\right)
=\left( -1\right) ^{\ell }\left( \cos t\cos s\,e_{\ell }+\sin t\cos
s\,e_{i}+\sin s\,e_{j}\right) \wedge e_{\ell +1}\text{.}
\end{equation*}%
Then, $\mathfrak{J}\left( P_{t,s}\right) =\left( P_{t,s},J_{P_{t,s}}\right) $%
, with%
\begin{equation*}
J_{P_{t,s}}=\left( -1\right) ^{\ell }\left( \cos t\,\cos s\,J_{e_{\ell
}\wedge e_{\ell +1}}+\sin t\cos s\,J_{e_{i}\wedge e_{\ell +1}}+\sin
s\,J_{e_{j}\wedge e_{\ell +1}}\right) \text{.}
\end{equation*}

By Lemma \ref{derivadas2,8},%
\begin{equation*}
\nabla _{e_{i}^{\ell }}\nabla _{E_{j}^{\ell }}\mathfrak{J=}\left. \tfrac{%
D^{2}}{dtds}\right\vert _{\left( 0,0\right) }\mathfrak{J}=\left( P_{0,0},\pi
_{0}\pi _{0}^{\prime }J_{1}\pi _{0}+\pi _{0}J_{2}\pi _{0}+\pi _{0}J_{1}\pi
_{0}^{\prime }\pi _{0}\right) 
\end{equation*}%
where $J_{1}$ is given in (\ref{J1}), $J_{2}=\left. \frac{\partial ^{2}}{%
\partial t\partial s}\right\vert _{\left( 0,0\right) }J_{P_{t,s}}=0$ and%
\begin{equation*}
\pi _{t}= \operatorname{id}-\left( \cos t\,e_{\ell }+\sin t\,e_{i}\right) \otimes
\left( \cos t\,e^{\ell }+\sin t\,e^{i}\right) -e_{\ell +1}\otimes e^{\ell +1}%
\text{.}
\end{equation*}%
Then, $\pi _{0}^{\prime }=-e_{i}\otimes e^{\ell }-e_{\ell }\otimes e^{i}$.
We compute%
\begin{align*}
\pi _{0}\pi _{0}^{\prime }& =\left(  \operatorname{id}-e_{0}\otimes
e^{0}-e_{1}\otimes e^{1}\right) \left( -e_{i}\otimes e^{\ell }-e_{\ell
}\otimes e^{i}\right)  \\
& =-e_{i}\otimes e^{\ell }-e_{\ell }\otimes e^{i}+e_{\ell }\otimes
e^{i}=-e_{i}\otimes e^{\ell }\text{.}
\end{align*}%
Using\ (\ref{J1pi0}) and properties (\ref{properties}) we have%
\begin{align*}
\pi _{0}\pi _{0}^{\prime }J_{1}\pi _{0}& =-\left( e_{i}\otimes e^{\ell
}\right) \left( -1\right) ^{\ell }J_{e_{j}\wedge e_{\ell +1}}\left(  \operatorname{id}%
-e_{\ell }\otimes e^{\ell }\right)  \\
& =\left( -1\right) ^{\ell }\left( e_{i}\otimes e^{j}\right) J_{e_{\ell
}\wedge e_{\ell +1}}=\left( e_{i}\otimes e^{j}\right) J_{e_{0}\wedge e_{1}}%
\text{.}
\end{align*}%
Now, as in the previous lemma, 
\begin{equation*}
\pi _{0}J_{1}\pi _{0}^{\prime }\pi _{0}=-\left( \pi _{0}\pi _{0}^{\prime
}J_{1}\pi _{0}\right) ^{t}=-\left( \left( e_{i}\otimes e^{j}\right)
J_{e_{0}\wedge e_{1}}\right) ^{t}=\left( J_{e_{0}\wedge e_{1}}\right) \left(
e_{j}\otimes e^{i}\right) \text{.}
\end{equation*}%
Therefore, as desired, 
\begin{equation*}
\pi _{0}\left( \pi _{0}^{\prime }J_{1}+J_{2}+J_{1}\pi _{0}^{\prime }\right)
\pi _{0}=\left( e_{i}\otimes e^{j}\right) J_{e_{0}\wedge
e_{1}}+J_{e_{0}\wedge e_{1}}\left( e_{j}\otimes e^{i}\right) \text{.}\qedhere
\end{equation*}
\end{proof}

\begin{lemma}
\label{lema3J}For $\ell =0,1$ and $i,j=2,\dots ,7$, we have%
\begin{equation*}
\nabla _{e_{i}^{\ell +1}}\nabla _{E_{j}^{\ell }}\mathfrak{J}=\left(
-1\right) ^{\ell +1}\left( \pi _{0}\circ J_{e_{i}\wedge e_{j}}\circ \pi
_{0}\right) \text{,}
\end{equation*}%
where $\pi _{0}$ is as above the orthogonal projection onto $\left(
e_{0}\wedge e_{1}\right) ^{\bot }$.
\end{lemma}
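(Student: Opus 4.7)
The plan is to follow the same template used in Lemmas \ref{lema1J} and \ref{lema2J}. By Lemma \ref{Derivadas28}, I would introduce the parametrized surface $P_{t,s} = \exp(t e_i^{\ell+1}) \gamma_j^\ell(s)$. Since $\exp(t e_i^{\ell+1})$ acts as a rotation in the $(e_{\ell+1}, e_i)$-plane, using (\ref{geodesica28}) this becomes
\begin{equation*}
P_{t,s} = (-1)^\ell (\cos s\, e_\ell + \sin s\, e_j) \wedge (\cos t\, e_{\ell+1} + \sin t\, e_i).
\end{equation*}
Note that the two rotations now act on different slots of the wedge, which is what distinguishes this case ($k = \ell+1 \neq \ell$) from Lemma \ref{lema2J}.

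Next I would expand $J_{P_{t,s}}$ by multilinearity of $X_{3,8}$ in its first two arguments, obtaining a sum of four terms of the form (scalar)$\cdot J_{e_a \wedge e_b}$ with coefficients $\cos s \cos t$, $\cos s \sin t$, $\sin s \cos t$, $\sin s \sin t$. From this:
\begin{equation*}
J_1 = \left.\tfrac{d}{ds}\right|_0 J_{P_{0,s}} = (-1)^\ell J_{e_j \wedge e_{\ell+1}}, \qquad J_2 = \left.\tfrac{\partial^2}{\partial t \partial s}\right|_{(0,0)} J_{P_{t,s}} = (-1)^{\ell+1} J_{e_i \wedge e_j},
\end{equation*}
where the expression for $J_2$ comes entirely from the $\sin s \sin t$ term and uses $J_{e_j \wedge e_i} = -J_{e_i \wedge e_j}$.

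I would then compute $\pi_t = \operatorname{id} - e_\ell \otimes e^\ell - (\cos t\, e_{\ell+1} + \sin t\, e_i) \otimes (\cos t\, e^{\ell+1} + \sin t\, e^i)$, giving $\pi_0' = -e_i \otimes e^{\ell+1} - e_{\ell+1} \otimes e^i$ and $\pi_0 \circ \pi_0' = -e_i \otimes e^{\ell+1}$. Plugging into formula (\ref{DJ2}) yields three summands. The term $\pi_0 \circ \pi_0' \circ J_1 \circ \pi_0$ involves the composition $e^{\ell+1} \circ J_{e_j \wedge e_{\ell+1}}$, which vanishes because $J_{e_j \wedge e_{\ell+1}}$ takes values in $(e_j \wedge e_{\ell+1})^\perp$; equivalently this follows from the identity (\ref{properties}). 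The symmetric term $\pi_0 \circ J_1 \circ \pi_0' \circ \pi_0$ then vanishes as well, since it equals $-(\pi_0 \circ \pi_0' \circ J_1 \circ \pi_0)^t$ by skew-symmetry of $J_1$ and self-adjointness of $\pi_0, \pi_0'$. Only the middle term $\pi_0 \circ J_2 \circ \pi_0 = (-1)^{\ell+1} (\pi_0 \circ J_{e_i \wedge e_j} \circ \pi_0)$ survives, yielding the claimed formula.

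The main obstacle is purely bookkeeping: carefully tracking the sign $(-1)^\ell$ propagating through $J_1$ and $J_2$, and recognizing that the two nondiagonal terms in (\ref{DJ2}) cancel for the reasons above rather than contributing nontrivially as in Lemma \ref{lema2J}. There is no substantive new octonionic identity required beyond those already collected in (\ref{properties}).
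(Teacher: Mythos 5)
Your proposal matches the paper's proof essentially step for step: the same parametrized surface $P_{t,s}=\exp(te_i^{\ell+1})\gamma_j^\ell(s)$, the same identification of $J_1$ and $J_2$, the same computation of $\pi_0\circ\pi_0'=-e_i\otimes e^{\ell+1}$, and the same mechanism (the range condition $e^{\ell+1}\circ J_{e_j\wedge e_{\ell+1}}=0$ plus transposition via skew-symmetry and self-adjointness) for showing that the two nondiagonal terms of formula (\ref{DJ2}) vanish, leaving only $\pi_0 J_2\pi_0=(-1)^{\ell+1}\pi_0 J_{e_i\wedge e_j}\pi_0$. This is correct and is the paper's argument.
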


\begin{proof}
If $i\neq j$, by Lemma \ref{Derivadas28}, we consider the parametrized
surface%
\begin{equation*}
P_{t,s}=\exp \left( te_{i}^{\ell +1}\right) \gamma _{j}^{\ell }\left(
s\right) =\left( -1\right) ^{\ell +1}\left( \cos t\,e_{\ell +1}+\sin
t\,e_{i}\right) \wedge \left( \cos s\,e_{\ell }+\sin s\,e_{j}\right) \text{.}
\end{equation*}

Then, $\left( -1\right) ^{\ell +1}\mathfrak{J}\left( P_{t,s}\right) $ is
equal to%
\begin{equation*}
\cos s\left( \cos t\,J_{e_{\ell +1}\wedge e_{\ell }}+\sin t\,J_{e_{i}\wedge
e_{\ell }}\right) +\sin s\left( \cos t\,J_{e_{\ell +1}\wedge e_{j}}+\sin
t\,J_{e_{i}\wedge e_{j}}\right) \text{.}
\end{equation*}

By Lemma \ref{derivadas2,8},%
\begin{equation*}
\nabla _{e_{i}^{\ell +1}}\nabla _{E_{j}^{\ell }}\mathfrak{J}=\left. \tfrac{%
D^{2}}{dtds}\right\vert _{\left( 0,0\right) }\mathfrak{J}=\left( P_{0,0},\pi
_{0}\pi _{0}^{\prime }J_{1}\pi _{0}+\pi _{0}J_{2}\pi _{0}+\pi _{0}J_{1}\pi
_{0}^{\prime }\pi _{0}\right) \text{,}
\end{equation*}%
where $J_{1}$ is given by (\ref{J1}),%
\begin{equation*}
J_{2}=\left. \tfrac{\partial ^{2}}{\partial t\partial s}\right\vert _{\left(
0,0\right) }J_{P_{t,s}}=\left( -1\right) ^{\ell +1}J_{e_{i}\wedge e_{j}}
\end{equation*}%
and%
\begin{equation*}
\pi _{t}= \operatorname{id}-\left( \cos t\,e_{\ell +1}+\sin t\,e_{i}\right) \otimes
\left( \cos t\,e^{\ell +1}+\sin t\,e^{i}\right) -e_{\ell }\otimes e^{\ell }%
\text{.}
\end{equation*}%
Then, $\pi _{0}^{\prime }=-e_{i}\otimes e^{\ell +1}-e_{\ell +1}\otimes e^{i}$%
. We compute%
\begin{align*}
\pi _{0}\pi _{0}^{\prime }& =\left(  \operatorname{id}-e_{0}\otimes
e^{0}-e_{1}\otimes e^{1}\right) \left( -e_{i}\otimes e^{\ell +1}-e_{\ell
+1}\otimes e^{i}\right)  \\
& =-e_{i}\otimes e^{\ell +1}-e_{\ell +1}\otimes e^{i}+e_{\ell +1}\otimes
e^{i}=-e_{i}\otimes e^{\ell +1}\text{.}
\end{align*}%
Also, using\ (\ref{J1pi0}), we obtain%
\begin{equation*}
\pi _{0}\pi _{0}^{\prime }J_{1}\pi _{0}=\left( -1\right) ^{\ell +1}\left(
e_{i}\otimes e^{\ell +1}\right) J_{e_{j}\wedge e_{\ell +1}}\left(  \operatorname{id}%
-e_{\ell }\otimes e^{\ell }\right) =0\text{,}
\end{equation*}%
Now, $\pi _{0}J_{1}\pi _{0}^{\prime }\pi _{0}=-\left( \pi _{0}\pi
_{0}^{\prime }J_{1}\pi _{0}\right) ^{t}=0$. Consequently, 
\begin{equation*}
\pi _{0}\left( \pi _{0}^{\prime }J_{1}+J_{2}+J_{1}\pi _{0}^{\prime }\right)
\pi _{0}=\pi _{0}J_{2}\pi _{0}=\left( -1\right) ^{\ell +1}\pi
_{0}J_{e_{i}\wedge e_{j}}\pi _{0}\text{,}
\end{equation*}%
which implies the statement of the lemma.
\end{proof}

\begin{proposition}
\label{jotava}The section $\mathfrak{J}:G(2,8)\rightarrow E_{2,8}^{1}$ is
vertically harmonic.
\end{proposition}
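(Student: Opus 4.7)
The plan is to apply Proposition \ref{PacoCarmelo}\,(a) and exhibit a smooth real function $f$ on $G(2,8)$ with $\Delta \mathfrak{J}=f\mathfrak{J}$. Since $ \operatorname{Spin}(7)$ acts transitively and isometrically on $G(2,8)$ and preserves the section $\mathfrak{J}$ and the connection on $E$, it suffices to verify such a relation at the single point $P_{o}=e_{0}\wedge e_{1}$; the corresponding $f$ must then be the constant obtained there.

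First I would evaluate the rough Laplacian at $P_{o}$ using the orthonormal basis $\{e_{j}^{\ell}:\ \ell=0,1,\ j=2,\dots,7\}$ of $T_{P_{o}}G(2,8)$ together with the vector fields $E_{j}^{\ell}$ parallel along the radial geodesics (so that $\nabla_{e_{j}^{\ell}}E_{j}^{\ell}=0$ at $P_{o}$), obtaining
\begin{equation*}
\left(\Delta \mathfrak{J}\right)(P_{o})=\sum_{\ell=0}^{1}\sum_{j=2}^{7}\nabla_{e_{j}^{\ell}}\nabla_{E_{j}^{\ell}}\mathfrak{J}.
\end{equation*}
The key input is Lemma \ref{lema1J}, which gives
\begin{equation*}
\nabla_{e_{j}^{\ell}}\nabla_{E_{j}^{\ell}}\mathfrak{J}=-J_{e_{0}\wedge e_{1}}+(e_{j}\otimes e^{j})\odot J_{e_{0}\wedge e_{1}},
\end{equation*}
an expression that, crucially, does not depend on $\ell$. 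Summing over $\ell=0,1$ and $j=2,\dots,7$ therefore produces $-12\,J_{e_{0}\wedge e_{1}}$ from the first term and $2\bigl(\pi_{0}\circ J_{e_{0}\wedge e_{1}}+J_{e_{0}\wedge e_{1}}\circ\pi_{0}\bigr)$ from the second, where I have used that $\sum_{j=2}^{7}e_{j}\otimes e^{j}=\pi_{0}$, the orthogonal projection onto $P_{o}^{\bot}$.

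The last simplification is the routine but essential observation that $J_{e_{0}\wedge e_{1}}\in \operatorname{Skew}_{P_{o}}(\mathbb{R}^{8})$ vanishes on $P_{o}$ and preserves $P_{o}^{\bot}$, so $\pi_{0}\circ J_{e_{0}\wedge e_{1}}=J_{e_{0}\wedge e_{1}}=J_{e_{0}\wedge e_{1}}\circ\pi_{0}$. This yields $2\bigl(\pi_{0}\circ J_{e_{0}\wedge e_{1}}+J_{e_{0}\wedge e_{1}}\circ\pi_{0}\bigr)=4\,J_{e_{0}\wedge e_{1}}$ and thus
\begin{equation*}
\left(\Delta \mathfrak{J}\right)(P_{o})=-12\,J_{e_{0}\wedge e_{1}}+4\,J_{e_{0}\wedge e_{1}}=-8\,J_{e_{0}\wedge e_{1}}=-8\,\mathfrak{J}(P_{o}).
\end{equation*}
Invoking $ \operatorname{Spin}(7)$-equivariance, the identity $\Delta \mathfrak{J}=-8\,\mathfrak{J}$ holds globally on $G(2,8)$, so by Proposition \ref{PacoCarmelo}\,(a) the section $\mathfrak{J}$ is vertically harmonic. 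The main (modest) obstacle is keeping careful track of the combinatorics of the sum in $j$ and $\ell$ and of the action of $\pi_{0}$ on $J_{e_{0}\wedge e_{1}}$; everything else is a direct application of the preceding lemmas.
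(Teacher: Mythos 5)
Your proof is correct and follows essentially the same route as the paper's: compute $\Delta\mathfrak{J}$ at $e_{0}\wedge e_{1}$ via Lemma~\ref{lema1J}, use $\sum_{j=2}^{7}e_{j}\otimes e^{j}=\pi_{0}$ and the fact that $J_{e_{0}\wedge e_{1}}$ vanishes on $P_{o}$ and preserves $P_{o}^{\bot}$ to get $-8\,J_{e_{0}\wedge e_{1}}$, and then extend by $\operatorname{Spin}(7)$-invariance. The only difference is purely presentational (you split the sum into the scalar part and the $\odot$ part before summing, whereas the paper sums term by term); the substance, the constant $-8$, and the appeal to Proposition~\ref{PacoCarmelo}(a) all coincide.
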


\begin{proof}
We will see that $\Delta \mathfrak{J}(u\wedge v)=f\,\mathfrak{J}(u\wedge v)$
for some smooth real function $f$. So, $\mathfrak{J}$ will be vertically
harmonic by Proposition \ref{PacoCarmelo}~(a). We first check it at $u\wedge
v=e_{0}\wedge e_{1}$. We consider the geodesic $\gamma _{j}^{\ell }$ as in (%
\ref{geodesica28}). We compute%
\begin{equation*}
\Delta \mathfrak{J}(e_{0}\wedge e_{1})=\sum\nolimits_{\ell
=0}^{1}\sum\nolimits_{j=2}^{7}\nabla _{e_{j}^{\ell }}\nabla _{E_{j}^{\ell }}%
\mathfrak{J}\text{.}
\end{equation*}%
By Lemma \ref{lema1J}, for $\ell =0,1$ and $j=2,\dots ,7$ we have%
\begin{equation*}
\nabla _{e_{j}^{\ell }}\nabla _{E_{j}^{\ell }}\mathfrak{J}=-J_{e_{0}\wedge
e_{1}}+\left( e_{j}\otimes e^{j}\right) J_{e_{0}\wedge e_{1}}+J_{e_{0}\wedge
e_{1}}\left( e_{j}\otimes e^{j}\right) ,
\end{equation*}%
which does not depend on $\ell $. Consequently,%
\begin{align*}
\Delta \mathfrak{J}(e_{0}\wedge e_{1})& =2\sum\nolimits_{j=2}^{7}\left(
-J_{e_{0}\wedge e_{1}}+\left( e_{j}\otimes e^{j}\right) J_{e_{0}\wedge
e_{1}}+J_{e_{0}\wedge e_{1}}\left( e_{j}\otimes e^{j}\right) \right)  \\
& =2\left( -6\,J_{e_{0}\wedge e_{1}}+J_{e_{0}\wedge e_{1}}+J_{e_{0}\wedge
e_{1}}\right) =-8\,J_{e_{0}\wedge e_{1}}\text{,}
\end{align*}%
since $\sum_{j=2}^{7}\left( e_{j}\otimes e^{j}\right) =0$ on $ \operatorname{span}%
\left\{ e_{0},e_{1}\right\} $ and the identity on its orthogonal complement.

Finally, one can show, using Theorem \ref{Spin7}, that $ \operatorname{Spin}\left(
7\right) $ acts transitively on $G\left( 2,8\right) $ and it preserves $%
\mathfrak{J}$, and this implies that $\Delta \mathfrak{J}=-8\,\mathfrak{J}$.
\end{proof}

\smallskip

In the following lemma we compute the curvatures that we shall use later in
the proof of Theorem \ref{Jarmo}. For brevity, we denote $%
A^{i,j}=e_{i}\otimes e^{j}-e_{j}\otimes e^{i}$.

\begin{lemma}
\label{curvaturaJ}For $\ell =0,1$ and $2\leq i,j\leq 7$ we have that $\left(
R_{e_{i}^{0}e_{j}^{1}}\mathfrak{J}\right) _{e_{0}\wedge e_{1}}=0$ and%
\begin{equation*}
\left( R_{e_{i}^{\ell }e_{j}^{\ell }}\mathfrak{J}\right) _{e_{0}\wedge
e_{1}}=\left[ J_{e_{0}\wedge e_{1}},A^{i,j}\right] \text{.}
\end{equation*}
\end{lemma}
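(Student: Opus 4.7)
The plan is to compute the curvature at $e_0\wedge e_1$ directly from its definition and assemble the answer from the second covariant derivatives already obtained in Lemmas \ref{lema1J}, \ref{lema2J} and \ref{lema3J}. Exactly as in equation (\ref{corIgualCero}) for $G(3,8)$, the vector fields $E_j^\ell$ are parallel along radial geodesics issuing from $e_0\wedge e_1$ in the symmetric space $G(2,8)$, and torsion-freeness of the Levi-Civita connection then gives $[E_i^k,E_j^\ell](e_0\wedge e_1)=0$ for all indices. Hence in the formula $R_{X,Y}\mathfrak{J}=\nabla_{[X,Y]}\mathfrak{J}+\nabla_Y\nabla_X\mathfrak{J}-\nabla_X\nabla_Y\mathfrak{J}$ the bracket term drops at $e_0\wedge e_1$, leaving
\[
R_{e_i^k e_j^\ell}\mathfrak{J}=\nabla_{e_j^\ell}\nabla_{E_i^k}\mathfrak{J}-\nabla_{e_i^k}\nabla_{E_j^\ell}\mathfrak{J}.
\]

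For the first identity, with $k=0$, $\ell=1$, I would invoke Lemma \ref{lema3J} twice. The term $\nabla_{e_i^0}\nabla_{E_j^1}\mathfrak{J}$ matches the lemma directly with parameter $\ell=1$ (so $(-1)^{\ell+1}=+1$) and equals $\pi_0\circ J_{e_i\wedge e_j}\circ\pi_0$. The term $\nabla_{e_j^1}\nabla_{E_i^0}\mathfrak{J}$ matches the lemma with parameter $\ell=0$ and the roles of $i$ and $j$ interchanged (so $(-1)^{\ell+1}=-1$) and equals $-\pi_0\circ J_{e_j\wedge e_i}\circ\pi_0$. Using the antisymmetry $J_{e_j\wedge e_i}=-J_{e_i\wedge e_j}$ noted just before Lemma \ref{lema1J}, these two expressions coincide, so their difference vanishes.

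For the second identity, with $k=\ell$, the subcase $i=j$ is trivial since the curvature is antisymmetric and $A^{i,i}=0$. For $i\neq j$, Lemma \ref{lema2J} yields
\[
\nabla_{e_i^\ell}\nabla_{E_j^\ell}\mathfrak{J}=(e_i\otimes e^j)\,J+J\,(e_j\otimes e^i),
\]
with $J:=J_{e_0\wedge e_1}$, and swapping $i\leftrightarrow j$ gives $\nabla_{e_j^\ell}\nabla_{E_i^\ell}\mathfrak{J}=(e_j\otimes e^i)\,J+J\,(e_i\otimes e^j)$. Subtracting, the four rank-one terms regroup as
\[
J\,(e_i\otimes e^j-e_j\otimes e^i)-(e_i\otimes e^j-e_j\otimes e^i)\,J=[J,A^{i,j}],
\]
which is the stated formula.

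The only delicate point is sign bookkeeping — keeping the parity $(-1)^{\ell+1}$ straight across the two applications of Lemma \ref{lema3J}, and respecting the paper's convention $R_{X,Y}=\nabla_{[X,Y]}+[\nabla_Y,\nabla_X]$ rather than its negative. No further geometric input is required; everything flows from the three preceding lemmas and the vanishing of the brackets $[E_i^k,E_j^\ell]$ at the basepoint.
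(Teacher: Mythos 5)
Your proof is correct and follows essentially the same route as the paper: it uses the vanishing of the brackets $[E_i^k,E_j^\ell]$ at $e_0\wedge e_1$ (the analogue of \eqref{corIgualCero}), the convention $R_{X,Y}=\nabla_{[X,Y]}+[\nabla_Y,\nabla_X]$, and then reads off the second covariant derivatives from Lemmas \ref{lema2J} and \ref{lema3J}. Your index bookkeeping in the first identity is in fact slightly more careful than the paper's printed display, which writes $\nabla_{e_i^1}\nabla_{E_j^0}\mathfrak{J}$ where $\nabla_{e_j^1}\nabla_{E_i^0}\mathfrak{J}$ is meant — only the latter produces the stated intermediate expression $-\pi_0\circ\left(J_{e_i\wedge e_j}+J_{e_j\wedge e_i}\right)\circ\pi_0=0$.
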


\begin{proof}
By Lemma \ref{lema3J} and the analogue of (\ref{corIgualCero}), 
\begin{equation*}
R_{e_{i}^{0}e_{j}^{1}}\mathfrak{J}=-\nabla _{e_{i}^{0}}\nabla _{E_{j}^{1}}%
\mathfrak{J}+\nabla _{e_{i}^{1}}\nabla _{E_{j}^{0}}\mathfrak{J}=-\pi
_{0}\circ \left( J_{e_{i}\wedge e_{j}}+J_{e_{j}\wedge e_{i}}\right) \circ
\pi _{0}=0\text{.}
\end{equation*}

Also, by Lemma \ref{lema2J} we have that 
\begin{align*}
R_{e_{i}^{\ell }e_{j}^{\ell }}\mathfrak{J}& =-\nabla _{e_{i}^{\ell }}\nabla
_{E_{j}^{\ell }}\mathfrak{J}+\nabla _{e_{j}^{\ell }}\nabla _{E_{i}^{\ell }}%
\mathfrak{J} \\
& =-\left( e_{i}\otimes e^{j}\right) J_{e_{0}\wedge e_{1}}-J_{e_{0}\wedge
e_{1}}\left( e_{j}\otimes e^{i}\right)  \\
& \ \ \ \ +\left( e_{j}\otimes e^{i}\right) J_{e_{0}\wedge
e_{1}}+J_{e_{0}\wedge e_{1}}\left( e_{i}\otimes e^{j}\right)  \\
& =\left( e_{j}\otimes e^{i}-e_{i}\otimes e^{j}\right) J_{e_{0}\wedge
e_{1}}-J_{e_{0}\wedge e_{1}}\left( e_{j}\otimes e^{i}-e_{i}\otimes
e^{j}\right) \text{,}
\end{align*}%
which coincides with the stated formula.
\end{proof}

\begin{proof}[Proof of Theorem \protect\ref{Jarmo}]
We know from Proposition \ref{jotava} that $\mathfrak{J}$ is vertically
harmonic. Then, by Proposition \ref{PacoCarmelo} it suffices to verify that $%
\mathcal{R}_{\mathfrak{J}}$ vanishes. By the invariance by the action of $%
 \operatorname{Spin}\left( 7\right) $, we need to check it only at $e_{0}\wedge e_{1}$%
. Since $R_{e_{i}^{\ell }e_{j}^{\ell +1}}\mathfrak{J}=0$ by Lemma \ref%
{curvaturaJ}, then 
\begin{equation*}
\mathcal{R}_{\mathfrak{J}}\left( e_{i}^{\ell }\right)
=\sum\nolimits_{j=2}^{7}\left\langle R_{e_{i}^{\ell },e_{j}^{\ell }}%
\mathfrak{J},\nabla _{e_{j}^{\ell }}\mathfrak{J}\right\rangle \text{.}
\end{equation*}

We will show than each summand vanishes.

By (\ref{derivada1}) and Lemma \ref{curvaturaJ}, since $ \operatorname{ad}{}_{Z}$ is
skew-symmetric for all $Z$ for the inner product on $ \operatorname{Skew}{}%
_{e_{0}\wedge e_{1}}$, we obtain%
\begin{eqnarray*}
\left( -1\right) ^{\ell }\left\langle R_{e_{i}^{\ell },e_{j}^{\ell }}%
\mathfrak{J},\nabla _{e_{j}^{\ell }}\mathfrak{J}\right\rangle
&=&\left\langle \left[ J_{e_{0}\wedge e_{1}},A^{i,j}\right] ,J_{e_{j}\wedge
e_{\ell +1}}-\left( e_{\ell }\otimes e^{\ell }\right) \odot J_{e_{j}\wedge
e_{\ell +1}}\right\rangle \\
&=&S_{1}+S_{2}\text{,}
\end{eqnarray*}%
where%
\begin{eqnarray*}
S_{1} &=&-\left\langle A^{i,j},\left[ J_{e_{0}\wedge e_{1}},J_{e_{j}\wedge
e_{\ell +1}}\right] \right\rangle \text{,} \\
S_{2} &=&\left\langle A^{i,j},\left[ J_{e_{0}\wedge e_{1}},\left( e_{\ell
}\otimes e^{\ell }\right) J_{e_{j}\wedge e_{\ell +1}}+J_{e_{j}\wedge e_{\ell
+1}}\left( e_{\ell }\otimes e^{\ell }\right) \right] \right\rangle \text{.}
\end{eqnarray*}

We observe that if $B$ is skew-symmetric, then 
\begin{equation}
\left\langle A^{i,j},B\right\rangle =2\left\langle Be_{j},e_{i}\right\rangle
=-2e^{i}\left( Be_{j}\right) =2e^{j}\left( Be_{i}\right) \text{.}
\label{short}
\end{equation}

We deduce from (\ref{properties}) and Corollary 6.22 of \cite{salamon} that%
\begin{eqnarray*}
J_{e_{j}\wedge e_{\ell +1}}J_{e_{0}\wedge e_{1}}e_{j} &=&J_{e_{j}\wedge
e_{\ell +1}}J_{e_{j}\wedge e_{0}}e_{1}=X\left( e_{j},e_{\ell +1},X\left(
e_{j},e_{0},e_{1}\right) \right) \\
&=&-\left\langle e_{\ell +1},e_{0}\right\rangle e_{1}+\left\langle e_{\ell
+1},e_{1}\right\rangle e_{0}=\left( -1\right) ^{\ell }e_{\ell }\text{.}
\end{eqnarray*}%
Consequently, 
\begin{eqnarray*}
S_{1} &=&2e^{i}\left( \left[ J_{e_{0}\wedge e_{1}},J_{e_{j}\wedge e_{\ell
+1}}\right] e_{j}\right) =2e^{i}\left( J_{e_{0}\wedge e_{1}}J_{e_{j}\wedge
e_{\ell +1}}e_{j}-J_{e_{j}\wedge e_{\ell +1}}J_{e_{0}\wedge
e_{1}}e_{j}\right) \\
&=&-2e^{i}\left( J_{e_{j}\wedge e_{\ell +1}}J_{e_{0}\wedge
e_{1}}e_{j}\right) =-2e^{i}\left( \left( -1\right) ^{\ell }e_{\ell }\right)
=0
\end{eqnarray*}%
and, using that $J_{e_{0}\wedge e_{1}}\left( e_{\ell }\right) $, $e^{\ell
}\left( e_{j}\right) $ and $e^{\ell }\left( J_{e_{0}\wedge e_{1}}\right) $
vanish, 
\begin{eqnarray*}
S_{2} &=&-2e^{i}\left( \left[ J_{e_{0}\wedge e_{1}},\left( e_{\ell }\otimes
e^{\ell }\right) J_{e_{j}\wedge e_{\ell +1}}+J_{e_{j}\wedge e_{\ell
+1}}\left( e_{\ell }\otimes e^{\ell }\right) \right] e_{j}\right) \\
&=&-2e^{i}\left( J_{e_{0}\wedge e_{1}}\left( \left( e_{\ell }\otimes e^{\ell
}\right) J_{e_{j}\wedge e_{\ell +1}}+J_{e_{j}\wedge e_{\ell +1}}\left(
e_{\ell }\otimes e^{\ell }\right) \right) e_{j}\right) \\
&&+2e^{i}\left( \left( \left( e_{\ell }\otimes e^{\ell }\right)
J_{e_{j}\wedge e_{\ell +1}}+J_{e_{j}\wedge e_{\ell +1}}\left( e_{\ell
}\otimes e^{\ell }\right) \right) J_{e_{0}\wedge e_{1}}e_{j}\right) \\
&=&2e^{i}\left( \left( e_{\ell }\otimes e^{\ell }\right) J_{e_{j}\wedge
e_{\ell +1}}J_{e_{0}\wedge e_{1}}e_{j}\right) \\
&=&2e^{i}\left( \left( e_{\ell }\otimes e^{\ell }\right) \left( -1\right)
^{\ell }e_{\ell }\right) =\left( -1\right) ^{\ell }2e^{i}\left( e_{\ell
}\right) =0\text{.}
\end{eqnarray*}

Thus, $\mathcal{R}_{\mathfrak{J}}(e_{i}^{\ell })$ vanishes for all $\ell
=0,1 $ and $i=2,\dots ,7$, as desired.
\end{proof}

\subsection{The energy of orthogonal almost complex structures of $S^{6}
\label{final}$}

We have defined the distinguished section $\mathfrak{J}$ in terms of the
triple cross product. Let us see what are the analogues if we consider the
other cross products (described in Subsection \ref{crossproduct}). We
observe that the case $\left( 1,2m\right) $ is empty and the case $\left(
m,m+1\right) $ is trivial (since in dimension $2$ there are exactly two
complex orthogonal structures, the one associated to the cross product and
the opposite).

Only the case $\left( r,n\right) =\left( 1,7\right) $ remains: the
assignment $u\in G\left( 1,7\right) \equiv S^{6}\mapsto J_{u}$, where $J_{u}$
is the orthogonal complex transformation $J_{u}$ in $u^{\bot }=T_{u}S^{6}$
given by $J_{u}\left( v\right) =u\times v$. This is the canonical almost
complex structure of $S^{6}$ and its energy was studied in \cite{CP}:
Theorem 4.1 (see also Proposition 4.7) asserts that this section is
harmonic, identifying $ \operatorname{Skew}_{u}\left( \mathbb{R}^{7}\right) $ with $%
\Lambda ^{2}\left( u^{\bot }\right) $. Then one concludes from Theorem \ref%
{Jarmo} that all the normal orthogonal complex sections of the Grassmannian
associated with cross products are harmonic maps as sections of the
corresponding fiber bundle, except for the empty case $\left( 1,2m\right) $.

On the other hand, in \cite{BLS} it is proved that $J$ has minimum energy
among all the almost complex orthogonal structures of $S^{6}$, and in
particular it is vertically harmonic among the orthogonal almost complex
structures (we comment that it contradicts \cite{WoodCrelle}, which states
that this structure is not even a local minimum). Note that this result is
strong, since it provides an absolute minimum, but it is not comparable with 
\cite{CP}. Indeed, in the latter only critical points are studied, but
regarding variations through arbitrary maps (not only through sections, i.e.
orthogonal almost complex structures). Also, the typical fiber in the case
studied in \cite{CP} is strictly larger than that of \cite{BLS} (consisting
of constant trace skew-symmetric and orthogonal skew-symmetric
transformations, respectively).

\bigskip

\noindent \textsc{f}a\textsc{maf} (Universidad
Nacional de C\'{o}rdoba) \ - \ \textsc{ciem} (Conicet)

\medskip

\noindent Ciudad Universitaria, (5000) C\'{o}rdoba, Argentina

\medskip

\noindent franciscoferraris032@hotmail.com, 

\noindent paomoas@unc.edu.ar,

\noindent salvai@famaf.unc.edu.ar

\end{document}